\newcommand{\R}{{\mathbb R}}
\newcommand{\N}{{\mathbb N}}
\newcommand{\EE}{{\mathbb E}}
\newcommand{\PP}{{\mathbb P}}
\newcommand{\eul}{{\widehat X}}
\newcommand{\Tm}{{\mathcal T}}
\newcommand{\eultr}{{\widehat Z}}
\newcommand{\ind}{1}
\newcommand{\usn}{\underline {s}_n}
\newcommand{\utn}{\underline {t}_n}
\newcommand{\upsn}{\overline {s}^n}
\newcommand{\uptn}{\overline {t}^n}
\newcommand{\sgn}{\operatorname{sgn}}
\newcommand{\eps}{\varepsilon}
\newcommand{\F}{{\mathcal F}}
\theoremstyle{plain}
\newtheorem{theorem}{Theorem}
\newtheorem{prop}{Proposition}
\newtheorem{lemma}{Lemma}
\theoremstyle{definition}
\newtheorem{rem}{Remark}
\begin{document}
\title[]{Existence, uniqueness and approximation of solutions of SDEs with superlinear coefficients in the presence of discontinuities of the drift coefficient}

\author[M\"uller-Gronbach]
{Thomas M\"uller-Gronbach}
\address{
Faculty of Computer Science and Mathematics\\
University of Passau\\
Innstrasse 33 \\
94032 Passau\\
Germany} \email{thomas.mueller-gronbach@uni-passau.de}

\author[Sabanis]
{Sotirios Sabanis}
\address{
School of Mathematics\\
University of Edinburgh\\
James Clerk Maxwell Building \\
Peter Guthrie Tait Road\\
Edinburgh\\
EH9 3FD\\
Scotland} \email{S.Sabanis@ed.ac.uk }

\author[Yaroslavtseva]
{Larisa Yaroslavtseva}
\address{
Faculty of Computer Science and Mathematics\\
University of Passau\\
Innstrasse 33 \\
94032 Passau\\
Germany} \email{larisa.yaroslavtseva@uni-passau.de}

\begin{abstract}
Existence, uniqueness, and $L_p$-approximation results are presented for
scalar stochastic differential equations (SDEs)   by considering the case where, the drift coefficient has finitely many spatial discontinuities while both coefficients can grow superlinearly (in the space variable).
These discontinuities are described by a piecewise local Lipschitz continuity and a piecewise monotone-type condition   while the diffusion coefficient is assumed to be locally Lipschitz continuous and  non-degenerate at the discontinuity points of the drift coefficient. Moreover, the superlinear nature of the coefficients is dictated by a suitable coercivity condition and a polynomial growth of the  (local)  Lipschitz constants of the coefficients. Existence and uniqueness of strong solutions of such SDEs  are obtained. Furthermore, the classical $L_p$-error rate $1/2$, for  a   suitable range of   values of $p$, is recovered for a tamed Euler scheme which is used for approximating these solutions.
To the best of  the authors'  knowledge, these are the first existence, uniqueness and approximation results for  this class of   SDEs.
\end{abstract}
\maketitle

\section{Introduction}
Let
$ ( \Omega, \mathcal{F}, \PP ) $
be a probability space with a normal filtration
$ ( \mathcal{F}_t )_{ t \in [0,\infty) } $ and consider a scalar autonomous stochastic differential equation (SDE)
\begin{equation}\label{sde0}
\begin{aligned}
dX_t & = \mu(X_t) \, dt + \sigma(X_t) \, dW_t, \quad t\in [0,\infty),\\
X_0 & = x_0,
\end{aligned}
\end{equation}
where $x_0\in\R$, $\mu, \sigma\colon\R\to\R$ are measurable functions and $W$ is a $1$-dimensional Brownian motion with respect to $ ( \mathcal{F}_t )_{ t \in [0,\infty) } $.

It is well-known that if the coefficients $\mu$ and $\sigma$  are globally Lipschitz continuous then the SDE \eqref{sde0} admits a unique strong solution $X$   which
can be approximated  by the  Euler scheme with an $L_p$-error rate $1/2$, for all $p\in[1, \infty)$,  at any given time $T>0$. For brevity, we consider $T=1$ henceforth.

For a  classical existence and uniqueness result for SDEs with superlinearly growing (but continuous) coefficients  see, e.g., \cite[Theorem 3.1.1]{PrevotRoeckner2007}.  It guarantees the existence of a unique strong solution $X$ of  \eqref{sde0}  if  $\mu$ and $\sigma$ are locally Lipschitz continuous and satisfy   the weak
coercivity condition. Note that the classical Euler scheme is known to diverge  in the $L_1$-sense for many
SDEs of this kind,
see \cite{hjk11}.
As a consequence, there has been a steadily increasing
body of research on new methods of approximation and corresponding $L_p$-error rates for such SDEs over the past decade, see, e.g., \cite{Beynetal2014, Beynetal2016,   FangGiles,  HutzenthalerJentzen2014Memoires, HutzenthalerJentzen2014, hjk12, KumarSabanis2016, Mao2016, Sabanis2013ECP, Sabanis2016,  TretyakovZhang2013, WangGan2013}. In particular, in \cite{Beynetal2014, FangGiles,  hjk12,  Mao2016, Sabanis2013ECP,  Sabanis2016, TretyakovZhang2013} an $L_p$-error rate of at least $1/2$ has been proven for approximating $X_1$ by explicit Euler-type methods,
e.g.,  tamed, projected or  truncated Euler schemes,  for suitable ranges of the values of $p$
and
for  subclasses of such SDEs with coefficients that at least satisfy  a monotone-type condition and  a coercivity condition
and are locally Lipschitz continuous with a polynomially growing  (local)  Lipschitz constant. We add that important applications of these results are emerging in areas of intense interest, due to their central role in Data Science and AI, such as MCMC sampling algorithms, see \cite{tula, hola:19}, and stochastic optimizers for fine tuning (artificial) neural networks and, more broadly, for solving non-convex stochastic optimization problems, see \cite{tusla, Poula}.

For a classical existence and uniqueness result for SDEs with a discontinuous drift coefficient see \cite{ MR0336813}.  Under the assumption that the  diffusion coefficient $\sigma$ is  bounded, bounded away from zero (thus nowhere degenerate)
 and globally Lipschitz continuous
  the latter paper
 provides the existence of a unique strong solution $X$ of  \eqref{sde0}  even if $\mu$ is only measurable and bounded.   Recently, in~\cite{LS16} an  existence and uniqueness result  for  SDEs  with a discontinous drift coefficient  has been proven under much weaker assumptions on the diffusion coefficient. This result states that the SDE \eqref{sde0} admits a unique strong solution $X$ if the drift coefficient $\mu$ has  finitely many discontinuity points and is piecewise Lipschitz continuous   and the diffusion coefficient $\sigma$ is globally Lipschitz continuous and non-degenerate at the  discontinuity points of $\mu$.

 The subject of $L_p$-approximation of solutions of SDEs with a discontinuous drift coefficient has been intensively studied in recent years, see ~\cite{DG18, dareiotis2021, GLN17,  LS16,  LS15b, LS18,  MGY20, MGY19b, NS19, NSS19, Tag16, Tag2017b, Tag2017a, Y21}. In particular, in \cite{DG18, dareiotis2021, LS18, MGY20, NSS19, Tag16, Tag2017b} positive $L_p$-error rates for approximating $X_1$ by explicit Euler-type methods have been proven for such SDEs.
 Under the
  above mentioned
 existence and uniqueness assumptions on $\mu$ and $\sigma$ from~\cite{LS16}, an $L_p$-error rate of at least $1/2$ for the Euler scheme for all $p\in[1, \infty)$ has been recovered in \cite{ MGY20} and an $L_2$-error rate of at least $1/2-$ for an adaptive Euler scheme has been shown in  \cite{NSS19}. Furthermore, in \cite{ dareiotis2021} an $L_p$-error rate of at least $1/2$  for the Euler scheme for all $p\in[1, \infty)$ has been proven in the case when $\mu$ is  measurable and bounded and $\sigma$ is $C^2_b$ and bounded away from zero.
 We add that lower error bounds that hold for any approximation based on finitely many evaluations of the driving Brownian motion are established in~\cite{hhmg2019,  MGY21}.

Existence, uniqueness and approximation of a strong solution of  \eqref{sde0} in the case of superlinearly growing  coefficients $\mu$ and $\sigma$ in the presence of discontinuities of the drift coefficient $\mu$
has, to the best of  the authors'  knowledge, not been studied
in the literature  previously. This article
closes   this gap by allowing both discontinuity and superlinear growth to coexist  as properties  of the drift coefficient.

To be more precise,   it is assumed   that the drift  coefficient $\mu$ has  finitely many discontinuity points and is  piecewise locally Lipschitz continuous while the diffusion coefficient $\sigma$ is  locally Lipschitz continuous   and  non-degenerate at the discontinuity points of $\mu$.  Moreover, $\mu$ and $\sigma$ satisfy a piecewise monotone-type condition and a coercivity condition and the Lipschitz constants of both $\mu$ and $\sigma$
satisfy a polynomial growth condition.

It is proved  that
  the SDE \eqref{sde0} admits a unique strong solution $X$ under these assumptions,
 see Theorem \ref{Thm0},
  and that
 $X_1$ can be approximated by a tamed Euler scheme with an $L_p$-error rate of at least $1/2$ for suitable ranges of the values of $p$.
 The latter result
 is an immediate consequence of  Theorem \ref{Thm1}, which states that  the maximum error of the time-continuous tamed Euler scheme on the time interval $[0,1]$
achieves at least the rate
$1/2$  in
the
$p$-th mean sense.
The proof of Theorem~\ref{Thm1} is based on a rigorous analysis of the
$p$-th mean of the total amount of times
$t\in[0,1]$, for which the  time-continuous tamed Euler scheme at time $t$ and its value at the closest grid point to the left from $t$ lie on different sides of a discontinuity point of the drift coefficient $\mu$.

 Moreover,  the piecewise linear interpolation of the tamed Euler scheme is considered
and its performance
 globally on the time interval $[0,1]$  is examined.  Using Theorem~\ref{Thm1}  yields that
the pathwise $L_q$-error of the piecewise linear interpolated  tamed Euler scheme
achieves at least the rate $1/2$,  if $q<\infty$, and at least the rate $1/2$, up to a log factor,  if $q=\infty$ in the $p$-th mean sense for suitable ranges of the values of $p$, see
 Theorem~\ref{Thm2}.

 In a similar direction but, independently of this work,
  existence and uniqueness of a strong solution of \eqref{sde0} as well as an $L_2$-error rate of at least $1/2$ for approximating $X_1$ by a tamed Euler scheme are presented in  \cite{SS22}. These refer, however, to  the case of a discontinuous and superlinearly growing drift coefficient $\mu$  when the diffusion coefficient $\sigma$ is assumed to be globally Lipschitz continuous.

Finally, although only scalar SDEs are considered in this work, it can be argued that its proof techniques can be naturally   extended to cover an appropriate multidimensional setting as well. Nevertheless, the proof of
such a result is to be the subject of future work.

A brief description of the content of the paper follows. The  assumptions on the coefficients $\mu$ and $\sigma$, the existence and uniqueness result,  Theorem \ref{Thm0}, and  the error estimates, Theorem~\ref{Thm1} and Theorem~\ref{Thm2}, are stated in Section~\ref{s3}. Section~\ref{s4} contains the
proofs of these theorems.

\section{Main results}\label{s3}

 Let $p_0, p_1\in[2, \infty)$, $\ell_{\mu}\in (0,\infty)$ and $\ell_{\sigma}\in[0,\ell{_\mu}/2]$.
  It is assumed henceforth   that the coefficients $\mu\colon\R\to\R$ and $\sigma\colon\R\to\R$ of the SDE \eqref{sde0} satisfy the following  conditions.
 \begin{itemize}
 \item[(A1)] There exists $c\in(0, \infty)$  such that for all $x\in\R$,
 \[
 2x \cdot \mu(x)+(p_0-1)\cdot \sigma^2(x)\leq c\cdot (1+x^2).
 \]
\item[(A2)] There exist $c\in (0, \infty)$, $k\in\N$ and $\xi_0, \ldots, \xi_{k+1}\in [-\infty,\infty]$ with $-\infty=\xi_0<\xi_1<\ldots < \xi_k <\xi_{k+1}=\infty$ such that
 for all $i\in\{1, \ldots, k+1\}$ and all $x,y\in (\xi_{i-1}, \xi_i)$,\\[-.1cm]
 \begin{itemize}
 \item[(i)]
 $2(x-y)\cdot (\mu(x)-\mu(y))+(p_1-1)\cdot (\sigma(x)-\sigma(y))^2\leq c\cdot |x-y|^2$, and\\[-.2cm]
\item[(ii)]
$|\mu(x)-\mu(y)|\leq c\cdot (1+|x|^{\ell_\mu}+|y|^{\ell_\mu})\cdot |x-y|$.\\[-.1cm]
\end{itemize}
\item[(A3)]  There exists $c\in (0, \infty)$ such that
 for all $x,y\in\R$,
 \[
|\sigma(x)-\sigma(y)|\leq c\cdot (1+|x|^{\ell_{\sigma}}+|y|^{\ell_{\sigma}})\cdot |x-y|.
\]
\item[(A4)]  $\sigma(\xi_i) \neq 0$ for all $i\in\{1,\ldots,k\}$.
\end{itemize}

\begin{rem}\label{rem1}
Note that (A3) implies that  $\sigma$ is continuous and there exists $c\in (0, \infty)$ such that  for all $x\in\R$,
\begin{equation}\label{tmg2}
|\sigma(x)|\leq c\cdot (1+|x|^{\ell_\sigma+1}).
\end{equation}
Furthermore, it is easy to check that (A2)(ii) implies that there exists $c\in (0, \infty)$ such that for all $x\in\R$,
\begin{equation}\label{tmg3}
|\mu(x)|\leq c\cdot (1+|x|^{\ell_\mu+1}).
\end{equation}
\end{rem}

We start with the existence and uniqueness result.

\begin{theorem}\label{Thm0}
Assume (A1) to (A4). Then the SDE \eqref{sde0} has a unique strong solution $X$.
\end{theorem}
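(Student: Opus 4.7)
My plan is to follow the transformation strategy introduced in~\cite{LS16}: construct a bi-Lipschitz $C^1$ diffeomorphism $G\colon\R\to\R$ with absolutely continuous (indeed Lipschitz) derivative whose job is to flatten the jumps of $\mu$ at $\xi_1,\ldots,\xi_k$, apply a generalized It\^o formula to the transformed process $Z_t=G(X_t)$, and thereby reduce existence and uniqueness for \eqref{sde0} to the same questions for an SDE whose drift is now continuous and whose coefficients fall within the scope of the classical theory for superlinear but continuous coefficients.

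Concretely, I would fix a small neighborhood $U_i$ of each $\xi_i$ (with the $U_i$ pairwise disjoint) and set $G(x)=x+\sum_{i=1}^{k}\phi_i(x)$, where $\phi_i$ is compactly supported in $U_i$, satisfies $\phi_i(\xi_i)=\phi_i'(\xi_i)=0$, and is designed so that the prescribed jump of $\tfrac12 G''(\cdot)\sigma^2(\cdot)$ across $\xi_i$ exactly cancels the jump of $G'(\cdot)\mu(\cdot)$ across $\xi_i$; assumption (A4) ($\sigma(\xi_i)\neq 0$) is precisely what makes this cancellation solvable. For sufficiently small $U_i$, the function $G$ is a bijection on $\R$, $C^1$ with globally Lipschitz derivative, and satisfies $c_1\le G'\le c_2$ for some $0<c_1\le c_2$; in particular $G$ and $G^{-1}$ are bi-Lipschitz and equal to the identity outside a compact set. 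Applying the It\^o formula for $C^1$ functions with absolutely continuous derivative gives
\[
dZ_t = \widetilde{\mu}(Z_t)\,dt+\widetilde{\sigma}(Z_t)\,dW_t,\qquad Z_0=G(x_0),
\]
with $\widetilde{\mu}(z)=G'(G^{-1}(z))\mu(G^{-1}(z))+\tfrac12 G''(G^{-1}(z))\sigma^2(G^{-1}(z))$ and $\widetilde{\sigma}(z)=G'(G^{-1}(z))\sigma(G^{-1}(z))$. By construction, $\widetilde{\mu}$ extends continuously across each $\xi_i$, and a direct verification using (A1)--(A3) together with the bi-Lipschitz bound on $G$ and the polynomial growth estimates \eqref{tmg2}, \eqref{tmg3} shows that $\widetilde{\mu}$, $\widetilde{\sigma}$ are locally Lipschitz on $\R$, inherit a coercivity condition from (A1), and inherit a one-sided monotonicity bound from (A2)(i). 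A classical result such as~\cite[Theorem~3.1.1]{PrevotRoeckner2007} then yields a unique strong solution $Z$ of the transformed SDE, and $X_t:=G^{-1}(Z_t)$ is the desired strong solution of \eqref{sde0}; conversely, $G(X)$ of any strong solution $X$ must solve the transformed SDE, so uniqueness transfers back through $G^{-1}$.

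The main obstacle is the verification that the corrector $\phi_i$ can be arranged so that $G$ is simultaneously (a) a global $C^1$ bi-Lipschitz diffeomorphism with a sufficiently small deviation from the identity, and (b) compatible with the superlinear one-sided Lipschitz structure of~(A2)(i). The coercivity~(A1) is more robust under the transformation, but the monotonicity condition on $\widetilde{\mu}$ picks up cross terms of the form $(G'(z)-G'(z'))\mu(G^{-1}(z'))$ and contributions from the $\tfrac12 G''\sigma^2$ piece; controlling these requires choosing the $U_i$ small enough, relative to the polynomial growth exponent $\ell_{\mu}$ of $\mu$ and the Lipschitz constants provided by (A2)(ii) and (A3), that the extra terms are absorbed into the right-hand side $c\cdot|z-z'|^2$. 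Once this quantitative choice is made, the remaining arguments (It\^o's formula, classical existence/uniqueness for the transformed SDE, and pullback) are routine.
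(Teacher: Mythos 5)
Your proposal is correct and follows essentially the same route as the paper: the paper defines $G=G_{\xi,\alpha,\nu}$ with local correctors $\alpha_i\,(x-\xi_i)|x-\xi_i|\,\phi((x-\xi_i)/\nu)$ and $\alpha_i=\frac{\mu(\xi_i-)-\mu(\xi_i+)}{2\sigma^2(\xi_i)}$ (which is exactly your cancellation condition, made solvable by (A4)), verifies in Lemma~\ref{transform1} that the transformed coefficients satisfy (A1), (A2$'$), (A3) — with the one-sided monotonicity handled by a gluing argument over overlapping intervals rather than by shrinking the support of the correctors — invokes a classical existence/uniqueness theorem for the transformed SDE, and pulls back through $G^{-1}$ in Lemma~\ref{transform3}. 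The technical crux you single out, the verification of the transformed monotonicity condition, is indeed where the paper spends most of its effort.
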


We turn to the problem of approximating $X_1$ as well as $(X_t)_{t\in[0,1]}$.

For $n\in\N$ we define a  time-continuous tamed Euler scheme $\eul_{n}=(\eul_{n,t})_{t\in[0,1]}$   on $[0,1]$ with step-size $1/n$ 
by
$\eul_{n,0}=x_0$ and
\begin{equation}\label{te}
\eul_{n,t}=\eul_{n,i/n}+\mu_n(\eul_{n,i/n}) \cdot (t-i/n)+\sigma_n(\eul_{n,i/n})\cdot (W_t-W_{i/n})
\end{equation}
for $t\in (i/n,(i+1)/n]$
and $i\in\{0,\ldots,n-1\}$,
where
\begin{equation}\label{tamedcoeff}
\mu_n(x) = \frac{\mu(x)}{1+n^{-1/2}|x|^{\ell_\mu}} \quad \mbox{ and }\quad  \sigma_n(x) = \frac{\sigma(x)}{1+n^{-1/2}|x|^{\ell_\mu}}
\end{equation}
for all $x\in\R$. We have the following error estimates for $\eul_{n}$.

\begin{theorem}\label{Thm1}\mbox{}
			 Let $\mu$ and $\sigma$ satisfy (A1) to (A4) with $p_0 > 2(\ell_\mu +  \max(\ell_\mu, 2\ell_\sigma +2)+ 1)$ and $p_1>2$.
		Then, for every $p\in (0,p_1)\cap (0,\frac{p_0}{\ell_\mu +  \max(\ell_\mu, 2\ell_\sigma +2)+ 1})$,
		there exists $c\in (0,\infty)$ such that, for all $n\in\N$,
		\begin{equation}\label{result4a}
			\EE[ \|X-\eul_{n}    \|_\infty^p]^{1/p}	\le  c/\sqrt{n}.	
		\end{equation}

\end{theorem}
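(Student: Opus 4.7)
The plan is to combine the transformation method of Leobacher--Szölgyenyi and Müller-Gronbach--Yaroslavtseva for removing drift discontinuities with the taming/moment machinery of Hutzenthaler--Jentzen--Kloeden and Sabanis for superlinearly growing coefficients. I would first construct a $C^2$ bi-Lipschitz bijection $G\colon\R\to\R$ equal to the identity outside a compact neighborhood of $\{\xi_1,\ldots,\xi_k\}$, with $G'>0$ bounded and $G''$ Lipschitz, chosen so that the transformed drift
\[
\bar\mu = \bigl(\mu\cdot G' + \tfrac12\sigma^2\cdot G''\bigr)\circ G^{-1}
\]
is continuous at the points $G(\xi_i)$; the non-degeneracy (A4) is what makes such a $G$ exist, and only the local behavior of $\sigma$ near the $\xi_i$ is needed, so (A3) suffices even though $\sigma$ may be superlinear far away. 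Under (A2)(i)(ii) and (A3), $\bar\mu$ then satisfies a global one-sided monotone-type condition together with a local Lipschitz bound of polynomial order, and $\bar\sigma=(\sigma\cdot G')\circ G^{-1}$ inherits (A3) (with the same exponent $\ell_\sigma$) and (A1). Since $G$ is bi-Lipschitz, it suffices to prove the analogous $L_p$-error bound for $|G(X_t)-G(\eul_{n,t})|$, for which the underlying SDE has no drift discontinuity.

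Before entering the error analysis I would establish the uniform moment bound $\sup_n\EE[\|\eul_n\|_\infty^{p_0}]<\infty$, obtained by the standard Gyöngy--Krylov / Sabanis recursion for tamed schemes using (A1) and \eqref{tamedcoeff}, together with $\EE[\|X\|_\infty^{p_0}]<\infty$ coming from Theorem~\ref{Thm0}. Combined with \eqref{tmg2}--\eqref{tmg3}, these imply $|\mu_n(x)-\mu(x)|+|\sigma_n(x)-\sigma(x)|\lesssim n^{-1/2}(1+|x|)^{\ell_\mu+\ell_\sigma+2}$, so the taming residues contribute $n^{-1/2}$ in $L_p$ for $p$ small enough. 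Applying Itô to $G(X_t)-G(\eul_{n,t})$ and comparing with the tamed Euler recursion produces: (a) a Lipschitz-in-space drift difference on the good event $B_{n,t}:=\{\eul_{n,t}\text{ and }\eul_{n,\lfloor nt\rfloor/n}\text{ lie in the same }(\xi_{i-1},\xi_i)\}$, controlled by the transported form of (A2)(i); (b) discretization-in-time errors of order $n^{-1/2}$ controlled by moments of $\eul_n$; and (c) a bad contribution on the complementary event
\[
A_{n,t} = \bigcup_{i=1}^k \bigl\{\min(\eul_{n,\lfloor nt\rfloor/n},\eul_{n,t}) < \xi_i < \max(\eul_{n,\lfloor nt\rfloor/n},\eul_{n,t})\bigr\}.
\]
The core of the argument, as the authors indicate, is the occupation-time estimate
\[
\EE\Bigl[\Bigl(\int_0^1\ind_{A_{n,t}}\,dt\Bigr)^{p/2}\Bigr] \lesssim n^{-p/2},
\]
which I would prove by conditioning on $\F_{\lfloor nt\rfloor/n}$: given $\eul_{n,\lfloor nt\rfloor/n}=x$, the increment $\eul_{n,t}-x$ is approximately Gaussian with variance $\sigma_n(x)^2(t-\lfloor nt\rfloor/n)$, so $\PP(A_{n,t}\mid \F_{\lfloor nt\rfloor/n})$ is bounded via the standard Gaussian density estimate $\PP(|Z|\leq r)\lesssim r/\sqrt{\mathrm{Var}(Z)}$ together with $\sigma(\xi_i)\neq 0$ and the uniform moment bound.

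The principal difficulty is to balance simultaneously the three sources of polynomial factors of $\|\eul_n\|_\infty$ that enter the analysis: the polynomially-growing local Lipschitz constants in (A2)(ii)--(A3), the polynomial growth inherited by $\bar\mu$ and the derivatives of $G$, and the prefactor in the crossing probability for $A_{n,t}$. Each of them forces a power of $|\eul_n|$ into the $L_p$ error bound, and the stated range $p<p_1\wedge p_0/(\ell_\mu+\max(\ell_\mu,2\ell_\sigma+2)+1)$ is exactly what is needed to absorb them by Hölder's inequality against the $p_0$-moment bound, while (A2)(i) delivers the monotone cancellation on the good event provided $p<p_1$. Combining the three contributions with a BDG estimate on the martingale part of $G(X)-G(\eul_n)$ and a Gronwall inequality yields the pointwise $L_p$-rate $n^{-1/2}$, and the supremum over $t\in[0,1]$ in \eqref{result4a} follows by a standard Doob--BDG argument applied to the martingale part together with the uniform moment control on the drift.
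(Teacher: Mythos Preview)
Your overall architecture---remove the drift discontinuities by a bi-Lipschitz $C^{1,\mathrm{Lip}}$ transformation $G$, exploit the monotone condition on the transformed coefficients, and control the bad event of a sign change between $\eul_{n,\utn}$ and $\eul_{n,t}$ via an occupation-time bound---matches the paper. The paper differs in one organizational point: instead of comparing $Z=G(X)$ with $G(\eul_n)$ directly, it introduces the tamed Euler scheme $\eultr_n$ for the transformed SDE and splits $Z-G(\eul_n)=(Z-\eultr_n)+(\eultr_n-G(\eul_n))$; the first piece is then imported from~\cite{Sabanis2016}, and only the second is analyzed from scratch. Your direct route is viable but would require you to redo part of the continuous-coefficient analysis inside the same argument.

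There is, however, a genuine gap in your sketch of the occupation-time estimate. The one-line bound ``$\PP(A_{n,t}\mid\F_{\utn})\lesssim r/\sqrt{\mathrm{Var}}$'' does not control the crossing event: conditionally on $\eul_{n,\utn}=x$, the probability of a sign change of $\eul_{n,\cdot}-\xi_i$ on $[\utn,t]$ is of order one whenever $|x-\xi_i|=O(n^{-1/2})$, so a pointwise Gaussian estimate gives nothing until you know how much time the scheme spends in an $O(n^{-1/2})$-neighbourhood of $\xi_i$. The paper obtains that input from a local-time/occupation-time formula for the semimartingale $\eul_n$ (Lemma~\ref{occup}), combined with a careful reduction (Lemma~\ref{central}) that converts the crossing event at time $t$ into the event $\{|\eul_{n,\utn-(t-\utn)}-\xi_i|\le cn^{-1/2}(1+|Z|)\}$ at a \emph{shifted} earlier time, with $Z$ standard normal. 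Even with these two ingredients you only get the first-moment bound $\int_0^1\PP(A_{n,t})\,dt\lesssim n^{-1/2}$; the $p$-th moment bound you actually need (and that the paper states as Proposition~\ref{prop1}) is obtained by an induction on $p$, iterating a pair of conditional estimates (Lemma~\ref{key}) of the form
\[
\int_s^1\PP(A\cap A_{n,t})\,dt\lesssim n^{-1/2}\bigl(\PP(A)+\EE[\ind_A|\eul_{n,\usn+1/n}-\xi|^{p_0}]\bigr)
\]
for $A\in\F_s$. None of these three steps---local time, time shift, induction---is captured by a conditional Gaussian density bound, and they are where most of the work in the proof lies.

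Two smaller points. First, the uniform bound you invoke is $\sup_n\EE[\|\eul_n\|_\infty^{p}]<\infty$ only for $p<p_0$, not $p=p_0$; the $p_0$-th moment is controlled only pointwise in $t$ (see Lemma~\ref{eulprop}), and this distinction matters when you trade moments via H\"older. Second, the passage from pointwise to sup-norm in the paper is not a ``standard Doob--BDG plus Gronwall'' step but uses the stochastic Gronwall-type estimate of Gy\"ongy--Krylov, applied to $e^{-\alpha t}|Y_{n,t}|^p$ at stopping times; if you instead run BDG inside a classical Gronwall loop you should check carefully that the polynomial weights coming from (A2)(ii) and (A3) do not force a loss of rate.
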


Next, we study the performance of the piecewise linear interpolation
$\overline{X}_n = (\overline X_{n,t})_{t\in [0,1]}$
of the time-discrete tamed Euler scheme, i.e.,
\[
\overline X_{n,t} = (n\cdot t-i)\cdot\widehat X_{n,(i+1)/n} + (i+1-n\cdot t)\cdot \widehat X_{n,i/n}
\]
for $t\in [i/n,(i+1)/n]$
 and $i\in\{0,\ldots,n-1\}$. We have the following error estimates for $\overline X_n$.

\begin{theorem}\label{Thm2}\mbox{}
		 Let $\mu$ and $\sigma$ satisfy (A1) to (A4) with $p_0 > 2(\ell_\mu +  \max(\ell_\mu, 2\ell_\sigma +2)+ 1)$ and $p_1>2$.
		Then, for every $p\in (0,p_1)\cap (0,\frac{p_0}{\ell_\mu +  \max(\ell_\mu, 2\ell_\sigma +2)+ 1})$ and for every $q\in [1,\infty]$,
		there exists $c\in (0,\infty)$ such that, for all $n\in\N$,
		\begin{equation}\label{result4}
			\EE[ \|X-\overline X_n    \|_q^p]^{1/p}	\le  \begin{cases} c/\sqrt{n}, &\text{ if }q\in [1,\infty), \\
				c \sqrt{\ln (n+1)}/\sqrt{n} , &\text{ if }q=\infty.
			\end{cases}
		\end{equation}	
\end{theorem}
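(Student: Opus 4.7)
The plan is to decompose
\[
X - \overline X_n = (X - \widehat X_n) + (\widehat X_n - \overline X_n)
\]
and reduce to Theorem~\ref{Thm1} plus a pure interpolation estimate. Since $[0,1]$ has unit length, $\|f\|_{L^q[0,1]} \le \|f\|_{L^\infty[0,1]}$ for every $q\in[1,\infty]$, so
\[
\EE[\|X-\widehat X_n\|_q^p]^{1/p} \le \EE[\|X-\widehat X_n\|_\infty^p]^{1/p} \le c/\sqrt n
\]
by Theorem~\ref{Thm1}. The whole task therefore reduces to bounding the interpolation defect $\widehat X_n - \overline X_n$ in $L^p(\Omega;L^q[0,1])$-norm by $c/\sqrt n$ when $q<\infty$ and by $c\sqrt{\log(n+1)}/\sqrt n$ when $q=\infty$.

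Second, I would exploit the explicit form of the defect. Comparing \eqref{te} with the definition of $\overline X_n$ on $[i/n,(i+1)/n]$ shows
\[
\widehat X_{n,t} - \overline X_{n,t} = \sigma_n(\widehat X_{n,i/n})\cdot\delta_n(t),
\]
where $\delta_n(t) = (W_t - W_{i/n}) - n(t-i/n)(W_{(i+1)/n} - W_{i/n})$ is, on each grid interval, a Brownian bridge independent of $\mathcal F_{i/n}$ with variance bounded by $1/(4n)$; hence $\EE[|\delta_n(t)|^s]^{1/s}\le c_s/\sqrt n$ uniformly in $t$ for every $s\ge 1$, and the bridge suprema on different grid intervals are mutually independent. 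Applying the pointwise bound $\|fg\|_{L^q[0,1]}\le\|f\|_{L^\infty[0,1]}\|g\|_{L^q[0,1]}$ on each sample path and then Hölder's inequality in $\Omega$ with conjugate exponents $(r,r')$, $r>1$ chosen close to $1$, gives
\[
\EE[\|\widehat X_n-\overline X_n\|_q^p]^{1/p} \le \EE\bigl[\max_i|\sigma_n(\widehat X_{n,i/n})|^{pr}\bigr]^{1/(pr)}\cdot\EE\bigl[\|\delta_n\|_q^{pr'}\bigr]^{1/(pr')}.
\]

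The first factor is uniformly bounded in $n$: since the taming denominator is $\ge 1$, (A3) and Remark~\ref{rem1} yield $|\sigma_n(x)|\le c(1+|x|^{\ell_\sigma+1})$, and the strict inequality $p(\ell_\sigma+1)<p_0$ that follows from the hypothesis on $p$ (using $\ell_\mu+\max(\ell_\mu,2\ell_\sigma+2)+1\ge\ell_\sigma+1$) leaves room to choose $r>1$ with $pr(\ell_\sigma+1)\le p_0$, at which point the standard uniform-in-$n$ moment bound of order $p_0$ on $\max_i|\widehat X_{n,i/n}|$ that already underlies the proof of Theorem~\ref{Thm1} does the job. For the second factor, when $q<\infty$ the bound $\EE[\|\delta_n\|_q^{pr'}]^{1/(pr')}\le c/\sqrt n$ follows routinely from Fubini together with Jensen's or Minkowski's integral inequality, combined with the pointwise moment bound on $\delta_n(t)$. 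The main obstacle is the case $q=\infty$: since the suprema of $\delta_n$ over the $n$ grid intervals are independent Brownian-bridge suprema, a Gaussian tail bound together with a union bound over $n$ intervals produces $\EE[\|\delta_n\|_\infty^s]^{1/s}\le c\sqrt{\log(n+1)}/\sqrt n$, which is precisely the source of the logarithmic correction in \eqref{result4}. Combining these ingredients completes the proof.
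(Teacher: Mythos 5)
Your proposal is correct and follows essentially the same route as the paper: the same decomposition via $\|\cdot\|_q\le\|\cdot\|_\infty$ and Theorem~\ref{Thm1}, the same identification of $\widehat X_n-\overline X_n$ as $\sigma_n(\widehat X_{n,\utn})\cdot(W-\overline W_n)$, and the same H\"older splitting using $|\sigma_n(x)|\le c(1+|x|^{\ell_\sigma+1})$ together with the uniform $p_0$-moment bound \eqref{tmg003a}. The only cosmetic difference is that the paper quotes the known rates for $\|W-\overline W_n\|_q$ from the literature, whereas you sketch their proof directly.
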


\begin{rem}
 For technical reasons
 we have excluded
  the case $\ell_\mu=0$ in our setting. If $\ell_\mu=0$
 then $\ell_\sigma=0$  and therefore
 $\mu$ is piecewise Lipschitz continuous and $\sigma$ is Lipschitz continuous and non-degenerate at the discontinuity points of $\mu$. As already mentioned in the introduction, under the latter assumptions,  existence and uniquness of a strong solution of \eqref{sde0} has been shown in \cite{LS16}. Moreover, in \cite{MGY20} the estimates \eqref{result4a} and \eqref{result4} have been proven for all $p\in[0, \infty)$ for $\widehat X_n$ and $\overline X_n$ being the time-continuous Euler scheme and the piecewise linear interpolation of the time-discrete  Euler scheme, respectively.
 \end{rem}

\begin{rem}
	If the drift coefficient $\mu$ is  continuous then the conditions (A2)(i) and (A2)(ii)  hold globally for all $x,y\in \R$.
	As already mentioned in the introduction the existence and uniqueness of a strong solution of \eqref{sde0} is well-known in this case, see e.g. \cite[Theorem 3.1.1]{PrevotRoeckner2007}. Moreover, in \cite{Sabanis2016} the estimate \eqref{result4a}  has been proven for all  $p\in(0, p_1)\cap (0,\frac{p_0}{2\ell_\mu +1})$ under the assumption that $p_0\geq 4\ell_\mu+2$,
	however, see Remark~\ref{gap}.
\end{rem}

\section{Proofs}\label{Proofs}\label{s4}
We proceed with the proof of the main results. We define
\[
\utn = \lfloor n\cdot t\rfloor / n
\]
for every $n\in\N$ and every $t\in [0,1]$.

We briefly describe the
structure
of this section.
In Subsection~\ref{4.3} we introduce a transformation, which is used to switch, by applying It\^{o}'s formula, from the SDE ~\eqref{sde0} to an SDE with superlinearly growing but continuous coefficients, we provide  crucial properties of this transformation and we prove Theorem~\ref{Thm0}. In Subsection~\ref{4.1} we provide $L_p$-estimates
of the solution $X$ and the time-continuous tamed Euler scheme $\eul_n$. Subsection~\ref{4.2} containes  occupation time estimates for  $\eul_n$, which finally lead to the  $p$-th mean estimate
\[
\EE\Bigl[\Bigl|\int_0^1  \ind_{\{(\eul_{n,t}-\xi_i)\cdot(\eul_{n, \utn}-\xi_i)\leq 0\}}\,dt\Bigr|^p\Bigr]^{1/p}\leq c\,n^{-1/2},
\]
of the
Lebesgue measure of the set
of times $t$ of a sign change of $\eul_{n,t}-\xi_i$ relative to the sign of $\eul_{n,\utn}-\xi_i$
for every $i=1, \ldots, k$, see Proposition~\ref{prop1}. The latter result is a crucial tool for the error analysis of the tamed Euler scheme $\eul_n$. Using the results of Subsections~\ref{4.3}, ~\ref{4.1} and~\ref{4.2} we then derive the error estimates in Theorem~\ref{Thm1} and Theorem ~\ref{Thm2} in Subsections~\ref{proof1} and  ~\ref{4.5}, respectively.

\subsection{The transformation}\label{4.3}

In this subsection we introduce a transformation $G\colon\R\to\R$, see \eqref{G} below, which allows us to switch, by applying the  It\^{o}'s formula, from the SDE~\eqref{sde0} to an SDE with coefficients satisfying  (A1), (A3) and
\begin{itemize}
\item[(A2$'$)] There exists $c\in (0,\infty)$ such that
 for all $x,y\in \R$,
 \begin{itemize}
 \item[(i)]
 $2(x-y)\cdot (\mu(x)-\mu(y))+(p_1-1)\cdot (\sigma(x)-\sigma(y))^2\leq c\cdot |x-y|^2$, and\\[-.2cm]
\item[(ii)]
$|\mu(x)-\mu(y)|\leq c\cdot (1+|x|^{\ell_\mu}+|y|^{\ell_\mu})\cdot |x-y|$.\\[-.1cm]
\end{itemize}
\end{itemize}

To this end we proceed as follows. For all $k\in\N$,
\[
z\in\Tm_k=\{(z_1,\dots,z_k)\in\R^k\colon z_1<\dots<z_k\}
\]
 and $\alpha=(\alpha_1,\dots,\alpha_k)\in\R^k$ we put
\[
\rho_{z,\alpha} =  \begin{cases}
\frac{1}{8 |\alpha_1|}, & \text{if }k=1, \\
\min\bigl(\bigl\{\frac{1}{8 |\alpha_i|}\colon i\in \{1, \ldots, k\}\bigr\} \cup \bigl\{ \frac{z_i-z_{i-1}}{2}\colon i\in \{2, \ldots, k\}\bigr\}\bigr),& \text{if }k\geq 2,
\end{cases}
\]
where we use the convention $1/0 =\infty$. Let $\phi\colon\R\to\R$ be given by
\begin{equation}\label{phi}
\phi(x)=(1-x^2)^4\cdot \ind_{[-1, 1]}(x).
\end{equation}
For all $k\in\N$, $z\in \Tm_k$, $\alpha\in\R^k$ and $\nu\in (0,\rho_{z,\alpha})$, we define a function $G_{z,\alpha,\nu}\colon\R\to\R$ by
\begin{equation}\label{fct1}
G_{z,\alpha,\nu}(x) = x+\sum_{i=1}^k \alpha_i\cdot (x-z_i)\cdot |x-z_i|\cdot \phi \Bigl(\frac{x-z_i}{\nu}\Bigr).
\end{equation}

The following lemma
is known from~\cite{MGY19b}.
It provides the properties of the functions $G_{z,\alpha,\nu}$ that are crucial for our purposes.

\begin{lemma}\label{lemx1}
Let $k\in\N$, $z\in \Tm_k$, $\alpha\in\R^k$, $\nu\in (0,\rho_{z,\alpha})$ and set $z_0=-\infty$ and $z_{k+1}= \infty$. The function $G_{z,\alpha,\nu}$ has the following properties.
\begin{itemize}
\item[(i)] $G_{z,\alpha,\nu}$ is differentiable with
\[
0<\inf_{x\in\R} G_{z,\alpha,\nu}'(x)\leq \sup_{x\in\R} G_{z,\alpha,\nu}'(x)<\infty.
\]
In particular,  $G_{z,\alpha,\nu}$ is strictly increasing,  Lipschitz continuous and has an inverse $G_{z,\alpha,\nu}^{-1}\colon\R\to\R$ that is Lipschitz continuous as well.
\item[(ii)] $G_{z,\alpha,\nu}(z_i)=z_i$ for all $i\in\{1, \ldots, k\}$. Moreover, $G_{z,\alpha,\nu}(x) = x$ for all $x\in(-\infty, z_1-\nu]\cup [z_k+\nu, \infty)$.
\item[(iii)]  $G_{z,\alpha,\nu}'$ is Lipschitz continuous, therefore absolutely continuous,
and it holds $G_{z,\alpha,\nu}'(z_i) = 1$ for all $i\in\{1,\dots,k\}$.
\item[(iv)] For all $i\in\{1,\dots,k+1\}$,  $G_{z,\alpha,\nu}'$ is differentiable on $(z_{i-1},z_i)$ with bounded, Lipschitz continuous derivative $G_{z,\alpha,\nu}''$.
\item[(v)] For all $i\in\{1,\dots,k\}$ the one-sided limits  $G_{z,\alpha,\nu}''(z_i-) $ and $G_{z,\alpha,\nu}''(z_i+)$ exist and satisfy
\[
G_{z,\alpha,\nu}''(z_i-) = -2\alpha_i,\quad G_{z,\alpha,\nu}''(z_i+) = 2\alpha_i.
\]
\end{itemize}
\end{lemma}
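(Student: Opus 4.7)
The plan is to reduce the entire claim to a study of the single kernel $g(u) = u|u|\phi(u/\nu)$, since
\[
G_{z,\alpha,\nu}(x) = x + \sum_{i=1}^k \alpha_i\, g(x-z_i).
\]
By the definition of $\rho_{z,\alpha}$, one has $\nu < (z_i - z_{i-1})/2$ whenever $k\geq 2$ and $i\geq 2$, so the intervals $[z_i - \nu, z_i + \nu]$ on which the individual summands are supported are pairwise disjoint; at each $x$ at most one summand is nonzero. Property (ii) is then immediate: $g(0)=0$ gives $G_{z,\alpha,\nu}(z_i)=z_i$, and outside $[z_1-\nu, z_k+\nu]$ every summand vanishes because $\phi$ is supported in $[-1,1]$.

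To establish the regularity of $g$, I would rewrite $g(u) = \sgn(u)\,u^2\,\phi(u/\nu)$ and differentiate piecewise to obtain, for $u\neq 0$,
\[
g'(u) = 2|u|\,\phi(u/\nu) + \frac{u|u|}{\nu}\,\phi'(u/\nu),
\]
an expression that extends continuously to $u=0$ with value $0$; thus $g\in C^1(\R)$ and $G'_{z,\alpha,\nu}(z_i)=1$, which is the equality in (iii). A second differentiation on $(0,\nu)$ yields
\[
g''(u) = 2\phi(u/\nu) + \tfrac{4u}{\nu}\phi'(u/\nu) + \tfrac{u^2}{\nu^2}\phi''(u/\nu),
\]
with the same formula but overall opposite sign on $(-\nu,0)$. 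Because $\phi\in C^3$ vanishes to order four at $\pm 1$, each of these expressions is bounded and Lipschitz on its domain and vanishes outside $(-\nu,\nu)$; this yields (iv). Evaluating the one-sided limits at $0$ gives $g''(0+) = 2\phi(0) = 2$ and $g''(0-) = -2$, which after multiplication by $\alpha_i$ produces the jump identities in (v). Global Lipschitz continuity of $g'$, and hence of $G'_{z,\alpha,\nu}$, follows from continuity of $g'$ at $0$ combined with the uniform bound on $g''$ on each half-line, completing (iii).

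For the quantitative bounds in (i), I would estimate $|g'(u)|$ on $[-\nu,\nu]$ using $\|\phi\|_\infty = 1$ together with the elementary bound $\|\phi'\|_\infty < 2$, obtained by locating the critical points of $\phi'(x) = -8x(1-x^2)^3$ at $x = \pm 1/\sqrt{7}$. With $|u|^2 \le \nu|u|$ on $[-\nu,\nu]$ this gives $|g'(u)| \le 4\nu$ uniformly, and disjointness of the supports together with $\rho_{z,\alpha} \le 1/(8|\alpha_i|)$ then produces
\[
\bigl|G'_{z,\alpha,\nu}(x) - 1\bigr| \le 4\nu\,\max_i |\alpha_i| < \tfrac12
\]
for every $x\in\R$, so $\tfrac12 < G'_{z,\alpha,\nu}(x) < \tfrac32$ uniformly. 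Strict monotonicity and Lipschitz continuity of the inverse are then standard consequences of having a $C^1$ bijection with derivative bounded away from $0$ and $\infty$. The one part that really needs care is exactly this last estimate: the factor $1/8$ in $\rho_{z,\alpha}$ is calibrated against the universal $C^0$ bound on $\phi'$ for the specific choice $\phi(x) = (1-x^2)^4 \ind_{[-1,1]}(x)$, and one must verify that these two universal constants combine to a number strictly smaller than $1$. Everything else is routine piecewise calculus, exploiting only that the factor $u|u|$ is $C^1$ across $0$ while its second derivative jumps by $4$ there.
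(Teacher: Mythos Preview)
Your argument is correct. The paper does not supply its own proof of this lemma; it simply records it as ``known from~\cite{MGY19b}'' and uses the listed properties as black boxes in the subsequent analysis. What you have written is precisely the direct verification one would expect: reduce to the single bump $g(u)=u|u|\,\phi(u/\nu)$, exploit disjointness of the supports $[z_i-\nu,z_i+\nu]$ coming from $\nu<\rho_{z,\alpha}$, and compute. Your calibration check that $4\nu\,|\alpha_i|<\tfrac12$, via $\|\phi'\|_\infty<2$ and $\nu<1/(8|\alpha_i|)$, is exactly the reason the constant $1/8$ appears in the definition of $\rho_{z,\alpha}$, and it is the only place any care is needed. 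The remaining regularity statements follow from the observations that $u\mapsto u|u|$ is $C^1$ with a second-derivative jump of size $4$ at $0$ and that $\phi=(1-x^2)^4\ind_{[-1,1]}$ is $C^3$ with all derivatives up to order three vanishing at $\pm1$.
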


Next, assume that the coefficients $\mu$ and $\sigma$ satisfy (A1) to (A4) and note that the property (A2)(ii) of $\mu$ implies the existence of all of the limits
\[
\mu(\xi_i-)=\lim_{x\uparrow \xi_i} \mu(x),\quad \mu(\xi_i+)=\lim_{x\downarrow \xi_i} \mu(x), \quad i\in\{1,\dots,k\}.
\]
Set $\xi=(\xi_1,\dots,\xi_k)$,  define $\alpha=(\alpha_1,\dots,\alpha_k)\in \R^k$ by
\[
\alpha_i =\frac{\mu(\xi_i-)-\mu(\xi_i+)}{2 \sigma^2(\xi_i)},
\]
 for  $i\in\{1,\dots,k\}$, and let $\nu\in (0,\rho_{\xi,\alpha})$.  We define
 \begin{equation}\label{G}
 G=G_{\xi,\alpha,\nu}.
 \end{equation}

 In the following lemma we introduce and study two functions $\widetilde \mu,\widetilde \sigma\colon\R\to\R$ that are later shown to be the coefficients of the SDE~\eqref{sde0} transformed by $G$, see the proof of
  Lemma \ref{transform3} below.

\begin{lemma}\label{transform1} Let $\mu$ and $\sigma$ satisfy (A1) to (A4). Let also $G$ be given by~\eqref{G} and extend $G''\colon \cup_{i=1}^{k+1} (\xi_{i-1},\xi_i)\to \R$ to the whole real line by taking
 \begin{equation}\label{xxc}
 G''(\xi_i) = 2\alpha_i + 2\,\frac{\mu(\xi_i+)-\mu(\xi_i)}{\sigma^2(\xi_i)},
\end{equation}
for $i\in\{1, \ldots, k\}$. Then, the functions
\begin{equation}\label{tildecoeff}
\widetilde \mu=(G'\cdot \mu+\tfrac{1}{2}G''\cdot\sigma^2)\circ G^{-1} \, \text{ and }\, \widetilde\sigma=(G'\cdot\sigma)\circ G^{-1}
\end{equation}
satisfy  (A1), (A2$'$) and (A3) with $\mu$ replaced by $\widetilde \mu$ and $\sigma$ replaced by $\widetilde \sigma$.
Moreover, the  SDE
\begin{equation}\label{sde1}
	\begin{aligned}
		dZ_t & = \widetilde\mu(Z_t) \, dt + \widetilde\sigma(Z_t) \, dW_t, \quad t\in [0,\infty),\\
		Z_0 & = G(x_0),
	\end{aligned}
\end{equation}
has a unique strong solution $Z$,
 which satisfies
	\begin{equation}\label{mm2}
		\sup_{t\in[0,1]}\EE\bigl[|Z_t|^{p_0}\bigr]<\infty.
	\end{equation}
\end{lemma}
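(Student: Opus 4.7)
The plan is to first verify that the transformed pair $(\widetilde\mu,\widetilde\sigma)$ satisfies the global conditions (A1), (A2$'$) and (A3), and then to invoke the classical existence, uniqueness and moment result for SDEs with continuous, locally Lipschitz coefficients subject to a global monotone and coercivity condition, e.g.\ \cite[Theorem 3.1.1]{PrevotRoeckner2007}.

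The key preliminary step is to show that $x\mapsto G'(x)\mu(x)+\tfrac12 G''(x)\sigma^2(x)$ is continuous on all of $\R$. Away from $\{\xi_1,\dots,\xi_k\}$ this is immediate from Lemma~\ref{lemx1}(iv) and the piecewise regularity of $\mu,\sigma$. At each $\xi_i$, one combines $G'(\xi_i)=1$ from Lemma~\ref{lemx1}(iii), the one-sided limits $G''(\xi_i\pm)=\pm 2\alpha_i$ from Lemma~\ref{lemx1}(v), the extension~\eqref{xxc}, and the definition $\alpha_i=(\mu(\xi_i-)-\mu(\xi_i+))/(2\sigma^2(\xi_i))$: a direct computation shows that both one-sided limits and the value at $\xi_i$ equal $\tfrac12(\mu(\xi_i-)+\mu(\xi_i+))$, which is precisely the point of the choice of $\alpha_i$ and of the extension~\eqref{xxc}. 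Composing with the Lipschitz homeomorphism $G^{-1}$ from Lemma~\ref{lemx1}(i) then yields continuity of $\widetilde\mu$ on $\R$; continuity of $\widetilde\sigma$ is immediate from Lemma~\ref{lemx1} and (A3).

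Next I would transfer (A1), (A2)(ii) and (A3) for $(\mu,\sigma)$ to the global conditions (A1), (A2$'$)(ii) and (A3) for $(\widetilde\mu,\widetilde\sigma)$. The main ingredients are Lemma~\ref{lemx1}---namely that $G, G^{-1}, G'$ are globally Lipschitz with $\inf_{x\in\R}G'(x)>0$, that $G''$ is bounded, and that $G(x)=x$, $G'(x)=1$, $G''(x)=0$ hold on $\R\setminus[\xi_1-\nu,\xi_k+\nu]$---together with the polynomial growth bounds~\eqref{tmg2} and~\eqref{tmg3}. Coercivity (A1) for $(\widetilde\mu,\widetilde\sigma)$ reduces on the complement of $[\xi_1-\nu,\xi_k+\nu]$ to (A1) for $(\mu,\sigma)$, while on this compact set everything is bounded by continuity; the growth of the Lipschitz constants needed for (A2$'$)(ii) and (A3) follows from the product rule $(G'f)'=G''f+G'f'$ after the change of variables $x=G^{-1}(u)$, using $|G^{-1}(u)|\le |G^{-1}(0)|+L|u|$ to pass between polynomial bounds in $x$ and in $u$.

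The main obstacle is the global monotone condition (A2$'$)(i). For $u,v$ such that $G^{-1}(u)$ and $G^{-1}(v)$ lie in the same open interval $(\xi_{i-1},\xi_i)$, the bi-Lipschitz nature of $G$ reduces the estimate to (A2)(i) for $(\mu,\sigma)$ plus controlled correction terms involving $G'$ and $G''$. When $G^{-1}(u)$ and $G^{-1}(v)$ lie on opposite sides of some $\xi_j$, the continuity established in the first step is essential: one inserts the intermediate value $\widetilde\mu(\xi_j)=\tfrac12(\mu(\xi_j-)+\mu(\xi_j+))$ (and similarly for $\widetilde\sigma$), so that the potentially singular contribution of $\mu$ across $\xi_j$ is absorbed by the complementary jump of $\tfrac12 G''\sigma^2$ built into~\eqref{xxc}, and the estimate reduces to finitely many one-sided inequalities of the type already treated. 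Once (A1), (A2$'$) and (A3) are in place for $(\widetilde\mu,\widetilde\sigma)$, \cite[Theorem 3.1.1]{PrevotRoeckner2007} yields a unique strong solution $Z$ of~\eqref{sde1}, and the moment bound~\eqref{mm2} follows by applying It\^o's formula to $t\mapsto(1+Z_t^2)^{p_0/2}$, using (A1) for $(\widetilde\mu,\widetilde\sigma)$ to absorb the drift and diffusion contributions, a standard localisation argument for the stochastic integral, and Gronwall's lemma.
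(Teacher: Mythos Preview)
Your treatment of the continuity of $\widetilde\mu$, of (A1), (A2$'$)(ii), (A3), and of existence, uniqueness and the moment bound is correct and matches the paper closely (the paper invokes \cite[Theorems 2.3.6 and 2.4.1]{Mao08} for the last part, but your It\^o--Gronwall route is equivalent).

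The genuine gap is in (A2$'$)(i). Your plan---for $x=G^{-1}(u)$ and $y=G^{-1}(v)$ in the same interval $(\xi_{i-1},\xi_i)$, to reduce to (A2)(i) via the bi-Lipschitz property of $G$ ``plus controlled correction terms''---does not go through as stated. Take $i=k+1$, $x\in(\xi_k,\xi_k+\nu)$ and $y\gg\xi_k+\nu$. Writing $\widetilde\mu(G(x))-\widetilde\mu(G(y))=G'(x)(\mu(x)-\mu(y))+(G'(x)-G'(y))\mu(y)+\tfrac12(G''\sigma^2)(x)-\tfrac12(G''\sigma^2)(y)$, the correction $2(G(x)-G(y))\,(G'(x)-1)\,\mu(y)$ satisfies $|G'(x)-1|\le L|x-y|$ (since $G'(\xi_k+\nu)=1$), so its modulus is of order $|x-y|^2\,|y|^{\ell_\mu+1}$. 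This is not bounded by $c|u-v|^2$ for $\ell_\mu>0$, and (A2)(i) gives no a priori negativity of $2(x-y)(\mu(x)-\mu(y))$ strong enough to absorb it. Your cross-interval device of inserting $\xi_j$ does not help here, since the difficulty already lies inside the single interval $(\xi_k,\infty)$.

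The paper resolves (A2$'$)(i) by a different decomposition: it splits not at the discontinuity points $\xi_j$ but at the boundary of the support of $G-\mathrm{id}$. On the compact set $I_1=[\xi_1-\nu-2,\xi_k+\nu+2]$ the functions $\widetilde\mu,\widetilde\sigma$ are globally Lipschitz by \eqref{pol1}--\eqref{ly11}, so (A2$'$)(i) is immediate there; on each unbounded half-line one has $\widetilde\mu=\mu$, $\widetilde\sigma=\sigma$, so (A2)(i) transfers with no correction. For the mixed case $x\in[\xi_k+\nu+2,\infty)$, $y\in[\xi_1-\nu-2,\xi_k+\nu]$ the paper inserts the fixed intermediate point $z=\xi_k+\nu+1$ and uses $(a+b)^2\le(1+\delta)a^2+(1+1/\delta)b^2$ with the specific choice $\delta_{x,y}=(z-y)/(x-y)$, which makes the coefficient of the unbounded-side increment $(\widetilde\sigma(x)-\widetilde\sigma(z))^2$ nonpositive, leaving only the compact-side increment $(\widetilde\sigma(z)-\widetilde\sigma(y))^2$, which is Lipschitz-controlled. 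This algebraic step---absent from your outline---is what actually carries the proof of (A2$'$)(i).
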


\begin{proof} Using Lemma \ref{lemx1} and the assumption that  $\mu$ and $\sigma$ satisfy (A2)(ii) and (A3) we obtain that the function $G'\cdot \mu+\tfrac{1}{2}G''\cdot\sigma^2$ is continuous on $\R\setminus\{\xi_1,\dots,\xi_k\}$. Employing Lemma \ref{lemx1} we furthermore obtain
 that for all $i\in\{1, \ldots, k\}$,
\begin{align*}
(G'\cdot \mu +\tfrac{1}{2}G''\cdot\sigma^2)(\xi_i-) & = \mu(\xi_i-) -\alpha_i\cdot \sigma^2(\xi_i)
 = (\mu(\xi_i-) +\mu(\xi_i+))/2  = (G'\cdot \mu +\tfrac{1}{2}G''\cdot\sigma^2)(\xi_i)\\
& = \mu(\xi_i+) +\alpha_i\cdot \sigma^2(\xi_i) =  (G'\cdot \mu+\tfrac{1}{2}G''\cdot\sigma^2)(\xi_i+).
\end{align*}
Hence $G'\cdot \mu+\tfrac{1}{2}G''\cdot\sigma^2$ is continuous on $\R$. Since $G^{-1}$ is continuous, see  Lemma~\ref{lemx1}, we thus obtain that $\widetilde\mu$ is continuous. Moreover,  Lemma \ref{lemx1} and the continuity of $\sigma$ imply that $\widetilde\sigma$ is continuous.

The continuity of $\widetilde\mu$ and $\widetilde\sigma$ implies that there exists  $c\in(0, \infty)$  such that, for all $x\in[\xi_1-\nu, \xi_k+\nu]$,
 \[
 2x \cdot \widetilde\mu(x)+(p_0-1)\cdot \widetilde\sigma^2(x)\leq c\leq c\cdot (1+x^2).
 \]
Note further that, for all $x\in(-\infty, \xi_1-\nu]\cup [\xi_k+\nu, \infty)$,
\begin{equation}\label{e1}
 G^{-1}(x)=x, \, G'(x)=1, \, G''(x)=0,
\end{equation}
and therefore, for all $x\in(-\infty, \xi_1-\nu]\cup [\xi_k+\nu, \infty)$,
\begin{equation}\label{vv1}
\widetilde\mu(x)=\mu(x), \, \widetilde\sigma(x)=\sigma(x).
\end{equation}
Using~\eqref{vv1} and the assumption that  $\mu$ and $\sigma$ satisfy (A1) we conclude that
 there exists  $c\in(0, \infty)$  such that, for all  $x\in(-\infty, \xi_1-\nu]\cup [\xi_k+\nu, \infty)$,
 \[
 2x \cdot \widetilde\mu(x)+(p_0-1)\cdot \widetilde\sigma^2(x)=2x \cdot \mu(x)+(p_0-1)\cdot \sigma^2(x)\leq c\cdot (1+x^2).
 \]
Thus, $\widetilde \mu$ and $\widetilde \sigma$ satisfy (A1) with $\mu$ replaced by $\widetilde \mu$ and $\sigma$ replaced by $\widetilde \sigma$.

We next show that $\widetilde \mu$ and  $\widetilde \sigma$ satisfy  (A2$'$)(ii) and (A3) with $\mu$ replaced by $
\widetilde \mu$ and $\sigma$ replaced by $\widetilde \sigma$, i.e.,  there exist $c_1, c_2\in (0, \infty)$ such that, for all $x,y\in \R$,
\begin{equation}\label{pol1}
|\widetilde\mu(x)-\widetilde\mu(y)|\leq c_1\cdot (1+|x|^{\ell_\mu}+|y|^{\ell_\mu})\cdot |x-y|
\end{equation}
and
 \begin{equation}\label{ly11}
|\widetilde\sigma(x)-\widetilde\sigma(y)|\leq c_2\cdot (1+|x|^{\ell_{\sigma}}+|y|^{\ell_{\sigma}})\cdot |x-y|.
\end{equation}
For  convenience, we use in the sequel the notation $x'=G^{-1}(x)$ for $x\in\R$.  Let $I\in\{(\xi_1-\nu, \xi_1), (\xi_1, \xi_2), \ldots, (\xi_{k-1}, \xi_k), (\xi_k, \xi_k+\nu)\}$. Clearly, for all $x,y\in I$,
\begin{equation}\label{t1}
\begin{aligned}
|& \widetilde\mu(x)-\widetilde\mu(y)|\\
&\qquad =|(G'\cdot \mu+\tfrac{1}{2}G''\cdot\sigma^2)(x')-(G'\cdot \mu+\tfrac{1}{2}G''\cdot\sigma^2)(y')|\\
&\qquad \leq |G'(x')| \cdot |\mu(x')-\mu(y')|+|G'(x')-G'(y')|\cdot |\mu(y')|\\
&\qquad\qquad+\tfrac{1}{2}|G''(x')| \cdot |\sigma(x')-\sigma(y')|\cdot |\sigma(x')+\sigma(y')| +\tfrac{1}{2}|G''(x')-G''(y')|\cdot |\sigma^2(y')|.
\end{aligned}
\end{equation}
Moreover, by Lemma~\ref{lemx1}(ii) and the fact that $G$ is strictly increasing, it follows that for all $x,y\in I$, it holds that $x', y'\in I$ as well. Using the assumption that
$\mu$ and $\sigma$ satisfy (A2)(ii) and (A3)
and Lemma \ref{lemx1} we further obtain that the functions $\mu, \sigma, G', G'', G^{-1}$ are  Lipschitz continuous on $I$ and thus, in particular,  bounded on $I$. Consequently, in view of~\eqref{t1}, we conclude  that there exist $c_1, c_2\in (0, \infty)$ such that, for all $x,y\in I$,
\begin{equation}\label{pol2}
|\widetilde\mu(x)-\widetilde\mu(y)|\leq c_1\cdot |x'-y'|\leq c_2\cdot |x-y|.
\end{equation}
Next, let $I\in\{(-\infty, \xi_1-\nu), (\xi_k+\nu, \infty)\}$.
Using \eqref{vv1} and the assumption that
$\mu$  satisfies (A2)(ii) we obtain that there exists $c\in (0, \infty)$ such that, for all $x,y\in I$,
\begin{equation}\label{pol3}
\begin{aligned}
|\widetilde\mu(x)-\widetilde\mu(y)|
=|\mu(x)-\mu(y)|\leq c\cdot (1+|x|^{\ell_\mu}+|y|^{\ell_\mu})\cdot |x-y|.
\end{aligned}
\end{equation}
Employing \eqref{pol2}, \eqref{pol3}, the continuity of $\widetilde\mu$  and the triangle inequality, we obtain \eqref{pol1}.
Proceeding in a similar way, we derive \eqref{ly11}.

Our next step is to  show that  $\widetilde \mu$ and $\widetilde \sigma$ satisfy (A2$'$)(i) with $\mu$ replaced by $\widetilde \mu$ and $\sigma$ replaced by $\widetilde \sigma$, i.e.,  there exists $c\in (0, \infty)$ such that,
 for all $x,y\in \R$,
\begin{equation}\label{pol4}
2(x-y)\cdot (\widetilde\mu(x)-\widetilde\mu(y))+(p_1-1)\cdot (\widetilde\sigma(x)-\widetilde\sigma(y))^2\leq c\cdot |x-y|^2.
\end{equation}
Put $I_1=[\xi_1-\nu-2, \xi_k+\nu+2]$.  Clearly, \eqref{pol1} and \eqref{ly11} imply that  there exists $c\in (0, \infty)$ such that, for all $x,y\in I_1$,
\begin{equation}\label{pol6}
|\widetilde\mu(x)-\widetilde\mu(y)|\leq c\cdot |x-y|\quad\text{and}\quad |\widetilde\sigma(x)-\widetilde\sigma(y)|\leq c\cdot |x-y|.
\end{equation}
It follows from \eqref{pol6}  that
\eqref{pol4} holds for all $x,y\in I_1$.
 Now, consider $I_2=[\xi_k+\nu, \infty)$. Observing~\eqref{vv1} and the assumption that  $\mu$ and $\sigma$ satisfy (A2)(i), we immediately see that
\eqref{pol4} holds for all $x,y\in I_2$ as well. Next, consider $I_3=[\xi_k+\nu+2, \infty)$ and $I_4=[\xi_1-\nu-2, \xi_k+\nu]$ and let $z=\xi_k+\nu+1$. Then, for all $x\in I_3$ and all $y\in I_4$, we have $x,z\in I_2$ and $y,z\in I_1$. Hence, there exists $c\in(0, \infty)$ such that, for all $x\in I_3$ and all $y\in I_4$,
\begin{equation}\label{g1}
\begin{aligned}
&2(x-y)\cdot(\widetilde\mu(x)-\widetilde\mu(y))\\
&\qquad=\frac{x-y}{x-z}\cdot 2(x-z)\cdot(\widetilde\mu(x)-\widetilde\mu(z))+\frac{x-y}{z-y}\cdot 2(z-y)(\widetilde\mu(z)-\widetilde\mu(y))\\
&\qquad\leq \frac{x-y}{x-z}\cdot(-(p_1-1)\cdot (\widetilde\sigma(x)-\widetilde\sigma(z))^2+c\cdot |x-z|^2)\\
&\qquad\qquad\qquad+\frac{x-y}{z-y}\cdot
(-(p_1-1)\cdot (\widetilde\sigma(z)-\widetilde\sigma(y))^2+c\cdot |z-y|^2)\\
&\qquad\leq -\frac{x-y}{x-z}\cdot(p_1-1)\cdot (\widetilde\sigma(x)-\widetilde\sigma(z))^2-\frac{x-y}{z-y}\cdot(p_1-1)\cdot (\widetilde\sigma(z)-\widetilde\sigma(y))^2\\
&\qquad\qquad\qquad + 2c\cdot |x-y|^2.
\end{aligned}
\end{equation}
Moreover, using the fact that, for all $a,b\in\R$ and all $\delta\in (0,\infty)$,
\begin{equation}\label{squares}
(a+b)^2\leq (1+\delta)\cdot a^2+ \Bigl(1+\frac{1}{\delta}\Bigr)\cdot b^2
\end{equation}
we conclude that, for all $x\in I_3$ and all $y\in I_4$,
\begin{equation}\label{g2}
\begin{aligned}
(\widetilde\sigma(x)-\widetilde\sigma(y))^2&=
 (\widetilde\sigma(x)-\widetilde\sigma(z)+\widetilde\sigma(z)-\widetilde\sigma(y))^2\\
 &\leq (1+\delta_{x,y})\cdot (\widetilde\sigma(x)-\widetilde\sigma(z))^2+\Bigl(1+\frac{1}{\delta_{x,y}}\Bigr)\cdot (\widetilde\sigma(z)-\widetilde\sigma(y))^2,
\end{aligned}
\end{equation}
where
\[
\delta_{x,y}=\frac{z-y}{x-y}.
\]
Combining \eqref{g1} and \eqref{g2} we conclude that there exists $c\in (0, \infty)$ such that, for all $x\in I_3$ and all $y\in I_4$,
\begin{align*}
&2(x-y)\cdot (\widetilde\mu(x)-\widetilde\mu(y))+(p_1-1)\cdot (\widetilde\sigma(x)-\widetilde\sigma(y))^2\\
&\qquad\qquad\leq (p_1-1)\cdot \Bigl(1+\delta_{x,y}-\frac{x-y}{x-z}\Bigr)\cdot (\widetilde\sigma(x)-\widetilde\sigma(z))^2 +(p_1-1)\cdot  (\widetilde\sigma(z)-\widetilde\sigma(y))^2\\
& \qquad\qquad
\le c\cdot |x-y|^2,
\end{align*}
where the last estimate follows from ~\eqref{pol6} and the fact that, for all $x\in I_3$ and all $y\in I_4$,
\[
1+\delta_{x,y}-\frac{x-y}{x-z}=1+\delta_{x,y}-1-\frac{z-y}{x-z}=\frac{z-y}{x-y}-\frac{z-y}{x-z}<0.
\]
Thus,
\eqref{pol4} holds for all $x\in I_3$ and all $y\in I_4$. Observing the fact that, for all $x,y\in\R$,
\begin{align*}
&2(x-y)\cdot(\widetilde\mu(x)-\widetilde\mu(y))+(p_1-1)\cdot (\widetilde\sigma(x)-\widetilde\sigma(y))^2\\
&\qquad=2(y-x)\cdot(\widetilde\mu(y)-\widetilde\mu(x))+(p_1-1)\cdot (\widetilde\sigma(y)-\widetilde\sigma(x))^2,
\end{align*}
we conclude that \eqref{pol4} holds for all $x\in I_4$ and all $y\in I_3$ as well. Consequently, \eqref{pol4} holds for all $x,y\in [\xi_1-\nu-2, \infty)$. Proceeding
similarly to the cases $x, y\in I_2$ and $(x,y)\in ( I_3\times I_4)\cup( I_4\times I_3)$
one obtains that \eqref{pol4} holds for all $x,y\in (-\infty, \xi_1-\nu]$ and for all
$(x,y)\in ([\xi_1-\nu, \infty)\times (-\infty, \xi_1-\nu-2])\cup( (-\infty, \xi_1-\nu-2]\times  [\xi_1-\nu, \infty)) $,
respectively. Hence, \eqref{pol4} holds for all $x,y\in\R$.

Finally, we turn to the SDE~\eqref{sde1}. Since the coefficients $\widetilde\mu$ and $\widetilde\sigma$ satisfy (A1), (A2$'$) and (A3) we may
 apply~\cite[Theorem 2.3.6]{Mao08} to conclude that the SDE \eqref{sde1} has a unique strong solution $Z$
and applying~\cite[Theorem 2.4.1]{Mao08} we obtain \eqref{mm2}.

The proof of the lemma is thus completed.
\end{proof}

Using Lemmas~\ref{lemx1} and~\ref{transform1} we are now able to prove the following lemma, which implies Theorem~\ref{Thm0}.

\begin{lemma}\label{transform3} Let $\mu$ and $\sigma$ satisfy (A1) to (A4). Let $G$ be given by~\eqref{G} and let $Z$ denote the unique strong solution  of the SDE~\eqref{sde1}, see Lemma~\ref{transform1}. Then $G^{-1}\circ Z$ is the unique strong solution of the SDE~\eqref{sde0}.
	\end{lemma}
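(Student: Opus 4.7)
The plan is to establish both existence and uniqueness via a generalized It\^o formula, exploiting the regularity of $G$ provided by Lemma~\ref{lemx1}. The key inputs are that $G$ is $C^1$ with Lipschitz derivative $G'$, that its second derivative $G''$ is bounded and measurable on $\R$ (after the extension~\eqref{xxc} at the points $\xi_i$), and that $G^{-1}$ inherits the same structure: it is $C^1$ with Lipschitz derivative $(G^{-1})'(y)=1/G'(G^{-1}(y))$ and a.e.\ second derivative $(G^{-1})''(y) = -G''(G^{-1}(y))/G'(G^{-1}(y))^3$. Since $G$ is not $C^2$, I would justify It\^o's formula applied to $G$ and to $G^{-1}$ by a standard mollification argument: letting $G_\eps = G*\eta_\eps$ for a smooth mollifier $\eta_\eps$, the classical It\^o formula holds for each $G_\eps$, and uniform boundedness of $G_\eps,G_\eps',G_\eps''$ together with $G_\eps\to G$, $G_\eps'\to G'$ uniformly on compacts and $G_\eps''\to G''$ a.e.\ permits passage to the limit via dominated convergence.

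For existence I would set $X_t := G^{-1}(Z_t)$, so that $X_0 = G^{-1}(G(x_0)) = x_0$, and apply the generalized It\^o formula to $G^{-1}$ driven by $Z$. Substituting the definitions~\eqref{tildecoeff} of $\widetilde\mu$ and $\widetilde\sigma$ and using $G^{-1}(Z_t)=X_t$, the It\^o-correction term $\tfrac12 (G^{-1})''(Z_t)\widetilde\sigma^2(Z_t)$ exactly cancels the $\tfrac12 (G^{-1})'(Z_t)G''(X_t)\sigma^2(X_t)$ contribution coming from the drift $\widetilde\mu$, leaving $dX_t = \mu(X_t)\,dt + \sigma(X_t)\,dW_t$. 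For uniqueness, let $\widetilde X$ be any strong solution of~\eqref{sde0}; applying the generalized It\^o formula to $G(\widetilde X_t)$ yields
\begin{equation*}
dG(\widetilde X_t) = \bigl[G'(\widetilde X_t)\mu(\widetilde X_t) + \tfrac12 G''(\widetilde X_t)\sigma^2(\widetilde X_t)\bigr]\,dt + G'(\widetilde X_t)\sigma(\widetilde X_t)\,dW_t,
\end{equation*}
which, by~\eqref{tildecoeff}, is the SDE~\eqref{sde1} evaluated at $G(\widetilde X_t)$, with initial value $G(x_0)$. The uniqueness part of Lemma~\ref{transform1} then forces $G(\widetilde X) = Z$ indistinguishably, hence $\widetilde X = G^{-1}(Z)$.

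The only non-routine point is the rigorous justification of It\^o's formula for functions that are merely $C^1$ with Lipschitz (rather than $C^1$) derivative. What makes this work cleanly is (i) the absolute continuity of $\langle Z\rangle_t = \int_0^t \widetilde\sigma^2(Z_s)\,ds$ with respect to Lebesgue measure, so that the choice of values of $G''$ on the null set $\{\xi_1,\ldots,\xi_k\}$ does not affect the integral $\int_0^t G''(X_s)\,d\langle X\rangle_s$ in the limit, and (ii) the continuity of $G'$, which ensures that no local-time correction appears at the $\xi_i$ when passing from $G_\eps$ to $G$. Both features are already baked into Lemma~\ref{lemx1}, so the mollification argument should go through without additional assumptions.
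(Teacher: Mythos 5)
Your proposal is correct and follows essentially the same route as the paper: apply the generalized It\^o formula (for $C^1$ functions with absolutely continuous, Lipschitz derivative) to $G^{-1}$ along $Z$ to get existence, and to $G$ along an arbitrary solution of~\eqref{sde0} to reduce uniqueness to that of~\eqref{sde1} established in Lemma~\ref{transform1}. The only cosmetic difference is that the paper justifies the It\^o formula by citing \cite[Problem 3.7.3]{ks91} rather than sketching a mollification argument.
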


\begin{proof}
Put $H=G^{-1}$.
It follows from Lemma~\ref{lemx1} that $H$ is differentiable and there exist $c_1, c_2,c_3\in(0, \infty)$ such that, for all $x,y\in\R$,
\begin{align*}
|H'(x)-H'(y)|&=\Bigl|\frac{1}{G'(G^{-1}(x))}-\frac{1}{G'(G^{-1}(y))}\Bigr|\leq c_1\cdot |G'(G^{-1}(y))-G'(G^{-1}(x))|\\
&\leq c_2\cdot |G^{-1}(y)-G^{-1}(x)|\leq c_3\cdot |x-y|.
\end{align*}
Thus, $H'$ is Lipschitz continuous, hence absolutely continuous. Moreover, Lemma~\ref{lemx1} implies that for all $i\in\{1,\dots,k+1\}$ the function $H'$ is differentiable on $(\xi_{i-1},\xi_i)$ with
\[
H''(x)=-\frac{G''(G^{-1}(x))}{(G'(G^{-1}(x)))^3}
\]
for all $x\in (\xi_{i-1},\xi_i)$.
We extend $H''$ to the whole real line by taking
$H''(\xi_i)=G''(\xi_i)$ for $i\in\{1, \ldots, k\}$, see~\eqref{xxc}.
 Applying It\^{o}'s formula to $Z$ and $H$, see, e.g.,~\cite[Problem 3.7.3]{ks91}, we conclude that for all $t\in[0,\infty)$, it holds $\PP\text{-a.s.}$ that
\begin{align*}
H(Z_t)&=H(Z_0)+\int_0^t(H'(Z_s)\cdot \widetilde\mu(Z_s)+\tfrac{1}{2}H''(Z_s)\cdot\widetilde\sigma^2(Z_s))\,ds+\int_0^t H'(Z_s)\cdot\widetilde\sigma(Z_s)\,dW_s\\
&= x_0+\int_0^t \mu(H(Z_s))\,ds +\int_0^t \sigma(H(Z_s))\,dW_s.
\end{align*}
Thus, the stochastic process $(H(Z_t))_{t\in[0,\infty)}$ is a strong solution of the SDE \eqref{sde0}.

Let $X$ be a further strong solution of the SDE \eqref{sde0}. According to Lemma~\ref{lemx1}, $G'$ is absolutely continuous.   Applying It\^{o}'s formula to $X$ and $G$ we conclude that, for all $t\in[0,\infty)$, it holds $\PP\text{-a.s.}$ that
\begin{align*}
G(X_t)&=G(x_0)+\int_0^t(G'(X_s)\cdot \mu(X_s)+\tfrac{1}{2}G''(X_s)\cdot\sigma^2(X_s))\,ds+\int_0^t G'(X_s)\cdot\sigma(X_s)\,dW_s\\
&=G(x_0)+\int_0^t \widetilde\mu(G(X_s))\,ds +\int_0^t \widetilde\sigma(G(X_s))\,dW_s.
\end{align*}
Lemma \ref{transform1} implies that  $(G(X_t))_{t\in[0,\infty)}$ and $Z$ are indistinguishable. Thus, $X$ and $(H(Z_t))_{t\in[0,\infty)}$ are indistinguishable, which yields that
  the strong solution of \eqref{sde0} is unique.
This completes the proof of the lemma.
\end{proof}

\subsection{$L_p$-estimates of the solution  and the time-continuous tamed Euler scheme}\label{4.1}
We have the following $L_p$-estimates  of the solution $X$ of the SDE~\eqref{sde0} and of its increments.
\begin{lemma}\label{solprop}
Assume (A1) to (A4). Then,
\begin{equation}\label{tmg02}
\sup_{t\in[0,1]}\EE\bigl[|X_t|^{p_0}\bigr] < \infty
\end{equation}
and for all $p\in [0,p_0)$,
\begin{equation}\label{tmg02a}
\EE\bigl[\,\sup_{t\in[0,1]}|X_t|^{p}\bigr] < \infty.
\end{equation}
Moreover, if $p_0 \ge \ell_\mu +2$,  there exists  $c\in(0, \infty)$ such that  for all $\delta\in[0,1]$ and all $s\in[0, 1-\delta]$,
\begin{equation}\label{tmg03}
\EE\Bigl[\,\sup_{t\in[s, s+\delta]} |X_t-X_s|^{2p_0/(\ell_\mu+2)}\Bigr]^{(\ell_\mu+2)/(2p_0)}\leq c\cdot \sqrt{\delta}.
\end{equation}
\end{lemma}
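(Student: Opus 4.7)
All three parts benefit from first passing through the transformed process $Z$ of Lemma~\ref{transform3}, since $X=G^{-1}\circ Z$ with $G^{-1}$ globally Lipschitz by Lemma~\ref{lemx1}(i). For \eqref{tmg02} this is immediate: $|X_t|^{p_0}\le c(1+|Z_t|^{p_0})$, and \eqref{mm2} finishes the argument. For \eqref{tmg02a} with $p\in[0,p_0)$ I would apply It\^o's formula to $|X_t|^p$ (Jensen handles $p<2$), use (A1) to obtain a differential inequality of the form $d|X_t|^p\le c(1+|X_t|^p)\,dt+dM_t$ for a continuous local martingale $M$, take $\sup_{t\in[0,1]}$ and expectation, apply BDG to $\EE[\sup|M|]$ and then Young's inequality to absorb a small multiple of $\EE[\sup_t|X_t|^p]$ into the left-hand side. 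The strict inequality $p<p_0$ is precisely what provides the slack for this absorption step, after which Gronwall closes the argument using \eqref{tmg02}.

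For \eqref{tmg03}, set $q=2p_0/(\ell_\mu+2)$; note $q\ge 2$ from $p_0\ge \ell_\mu+2$. Decomposing
\[
X_t-X_s=\int_s^t\mu(X_u)\,du+\int_s^t\sigma(X_u)\,dW_u,
\]
I would treat the stochastic integral by BDG and Minkowski's integral inequality, yielding
\[
\Bigl\|\sup_{t\in[s,s+\delta]}\Bigl|\int_s^t\sigma(X_u)\,dW_u\Bigr|\Bigr\|_{L^q}\le c\Bigl(\int_s^{s+\delta}\|\sigma(X_u)\|_{L^q}^2\,du\Bigr)^{1/2}\le c\sqrt{\delta}\sup_u\|\sigma(X_u)\|_{L^q}.
\]
The decisive observation is that the assumption $\ell_\sigma\le\ell_\mu/2$ gives $q(\ell_\sigma+1)\le q(\ell_\mu/2+1)=p_0$, so \eqref{tmg2} and \eqref{tmg02} imply $\sup_u\|\sigma(X_u)\|_{L^q}<\infty$ and deliver the claimed $\sqrt{\delta}$ rate on the diffusion side.

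The drift term is the main obstacle. The naive Minkowski bound $\bigl\|\int_s^t\mu(X_u)\,du\bigr\|_{L^q}\le\delta\sup_u\|\mu(X_u)\|_{L^q}$ would even give a $\delta$-rate, but requires $q(\ell_\mu+1)\le p_0$, which under $p_0\ge\ell_\mu+2$ holds only in the limiting case $\ell_\mu=0$ and fails genuinely for $\ell_\mu>0$. To get around this, I would work with $Z$ and apply It\^o's formula to $|Z_t-Z_s|^q$: after writing $\widetilde\mu(Z_u)=\widetilde\mu(Z_s)+(\widetilde\mu(Z_u)-\widetilde\mu(Z_s))$ and similarly for $\widetilde\sigma$, the cross terms of order $|Z_u-Z_s|^{q-2}$ combine through the global one-sided Lipschitz condition (A2$'$)(i) into a harmless $c|Z_u-Z_s|^q$ contribution (valid for $q$ in the relevant range), while the residual terms only involve $\widetilde\mu(Z_s)$ and $\widetilde\sigma(Z_s)$ at the fixed time $s$. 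Using Young's inequality one controls these residuals in terms of $|Z_u-Z_s|^q$ and powers of $|\widetilde\mu(Z_s)|$ and $|\widetilde\sigma(Z_s)|$, and a Gronwall argument closes the estimate. Transferring back via $|X_t-X_s|\le L|Z_t-Z_s|$ (Lemma~\ref{lemx1}(i)) then gives \eqref{tmg03}. The technical heart of the proof lies in making this Young/Gronwall step compatible with only having $p_0$-th moments of $Z_s$, exploiting that the critical exponent $q$ has been chosen precisely so that the diffusion-side matching $q(\ell_\sigma+1)\le p_0$ is sharp.
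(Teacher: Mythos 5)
Your reduction to the transformed process and the proof of \eqref{tmg02} via Lemma~\ref{transform1} and the Lipschitz continuity of $G^{-1}$ coincide with the paper. The other two parts, however, each contain a genuine gap.

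For \eqref{tmg02a}, the BDG--Young absorption step does not close. After BDG, the martingale contribution is controlled by $\EE\bigl[\bigl(\int_0^1|X_u|^{2p-2}\sigma^2(X_u)\,du\bigr)^{1/2}\bigr]$, and any Young/H\"older split that leaves an $\eps\,\EE[\sup_t|X_t|^p]$ to be absorbed produces a residual of the form $\EE\bigl[\int_0^1|\sigma(X_u)|^p\,du\bigr]$ (or $\EE\bigl[\int_0^1|X_u|^{p+2\ell_\sigma}\,du\bigr]$), which by \eqref{tmg2} requires moments of order $p(\ell_\sigma+1)$, respectively $p+2\ell_\sigma$. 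For $\ell_\sigma>0$ and $p$ close to $p_0$ these exceed $p_0$, so the slack $p<p_0$ is \emph{not} what saves the argument. The paper avoids BDG entirely: it applies It\^o to $|X_t|^{p_0}$, obtains $\EE[|X_{\tau\wedge\tau_n}|^{p_0}]\le \EE[A_{\tau\wedge\tau_n}]$ for an increasing process $A$ with $\EE[A_1]<\infty$, and then invokes the Lenglart-type domination lemma \cite[Lemma 3.2]{gk03b}, which yields $\EE[\sup_t|X_{t\wedge\tau_n}|^{\gamma p_0}]\le \tfrac{2-\gamma}{1-\gamma}\EE[A_1^\gamma]$ for every $\gamma\in(0,1)$ with no control of the quadratic variation whatsoever; Fatou then gives all moments $p<p_0$ of the supremum.

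For \eqref{tmg03}, your decomposition $\widetilde\mu(Z_u)=\widetilde\mu(Z_s)+(\widetilde\mu(Z_u)-\widetilde\mu(Z_s))$ runs into two obstacles. First, (A2$'$)(i) carries the constant $p_1-1$, so it absorbs the It\^o cross terms into $c|Z_u-Z_s|^q$ only when roughly $q\le p_1$; here $q=2p_0/(\ell_\mu+2)$, and the lemma assumes nothing beyond $p_1\ge 2$, so $q$ may be far outside "the relevant range" (e.g.\ $p_1=2$, $p_0$ large). Second, the residual drift term $|Z_u-Z_s|^{q-1}|\widetilde\mu(Z_s)|$ after Young's inequality leaves $\EE[|\widetilde\mu(Z_s)|^q]$, which by \eqref{tmg3} needs moments of order $(\ell_\mu+1)q=2p_0(\ell_\mu+1)/(\ell_\mu+2)>p_0$ whenever $\ell_\mu>0$; this is exactly the "technical heart" you defer, and it does not go through. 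The paper's proof of \eqref{tmg03} uses neither (A2$'$)(i) nor a freeze at time $s$ of the coefficients: it applies It\^o to $|X_u-X_s|^p$ and bounds the drift term by writing $(X_u-X_s)\mu(X_u)+\tfrac{p-1}{2}\sigma^2(X_u)\le X_u\mu(X_u)+\tfrac{p_0-1}{2}\sigma^2(X_u)+|X_s|\,|\mu(X_u)|$, i.e.\ it invokes the coercivity (A1) at the running point (legitimate since $p\le p_0$) and pays only the cross term $|X_s|\cdot|\mu(X_u)|$ of total polynomial degree $\ell_\mu+2$; a H\"older step with exponent $p/2$ then requires exactly $(\ell_\mu+2)p/2=p_0$ moments, which is precisely why $p=2p_0/(\ell_\mu+2)$ is the right exponent, and the resulting inequality $\EE[Y]^{2/p}\le c(\delta+\sqrt\delta\,\EE[Y]^{1/p})$ is solved as a quadratic. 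You would need to adopt this (or an equivalent) mechanism to make your sketch work.
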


\begin{proof}
Recall from
Lemmas \ref{transform1} and \ref{transform3}
that, $\PP$-a.s., for all $t\in[0,1]$, we have  $X_t=G^{-1}(Z_t)$, where $G^{-1}\colon\R\to\R$
 is Lipschitz continuous and $Z$ is the unique strong solution of the SDE~\eqref{sde1} with coefficients satisfying (A1), (A2') and (A3). For the proof of Lemma~\ref{solprop} we may therefore assume that $\mu$ and $\sigma$ satisfy (A1), (A2') and (A3).
Then~\eqref{tmg02} follows from Lemma~\ref{transform1}.

We turn our attention to the proof of~\eqref{tmg02a}.
By It\^{o}'s formula and (A1) there exists $c\in (0,\infty)$ such that,  for all $t\in [0,\infty)$,
\begin{equation}\label{tmg05}
\begin{aligned}
|X_t|^{p_0} & = |x_0|^{p_0}
+\int_0^t p_0\cdot |X_u|^{p_0-2}\cdot \bigl( X_u  \mu(X_u) + \frac{1}{2}  (p_0-1) \cdot \sigma^2(X_u)\bigr)\, du \\
& \qquad +  \int_0^t p_0 \cdot X_u |X_u|^{p_0-2} \sigma(X_u)\, dW_u\\
&  \le  |x_0|^{p_0}+c\cdot \int_0^t |X_u|^{p_0-2}\cdot  (1+X_u^2)\, du +  p_0\cdot \int_0^t  X_u |X_u|^{p_0-2} \sigma(X_u)\, dW_u.
\end{aligned}
\end{equation}
Choose an increasing sequence of stopping times $(\tau_n)_{n\in\N}$ such that $\lim_{n\to\infty} \tau_n = \infty$ and the stochastic processes
\begin{equation}\label{tmg10}
Y^{(n)} = \Bigl(\int_0^{v\wedge\tau_n} X_u|X_u|^{p_0-2}\sigma(X_u)\, dW_u\Bigr)_{v\ge 0},\quad n\in\N,
\end{equation}
are martingales. Hence, for all $n\in\N$ and every bounded stopping time $\tau$,
\[
\EE[Y^{(n)}_\tau] = 0,
\]
which jointly with~\eqref{tmg05} implies that there exists $c\in (0,\infty)$ such that, for all $n\in\N$ and every stopping time $\tau$ with $ \tau \le 1$,
\begin{equation}\label{tmg06}
\EE\bigl[|X_{\tau\wedge \tau_n}|^{p_0}\bigr] \le |x_0|^{p_0}+c\cdot \EE\Bigl[ \int_0^{\tau\wedge \tau_n} |X_u|^{p_0-2}  \cdot (1+X_u^2)\, du\Bigr].
\end{equation}
Using~\cite[Lemma 3.2]{gk03b} we obtain from~\eqref{tmg06} that for all $\gamma\in (0,1)$ there exist $c_1,  c_2\in (0,\infty)$ such that, for all $n$,
\begin{equation}\label{tmg10i}
\begin{aligned}
\EE\bigl[\,\sup_{t\in[0,1]}|X_{t\wedge \tau_n}|^{\gamma p_0}\bigr] & \le \frac{2-\gamma}{1-\gamma}\cdot  \EE\Bigl[\Bigl(|x_0|^{p_0}+ c_1 \cdot\int_0^{1\wedge \tau_n} |X_u|^{p_0-2}  \cdot(1+X_u^2)\, du\Bigr)^\gamma\Bigr]\\
& \le  \frac{2-\gamma}{1-\gamma} \cdot \Bigl(|x_0|^{p_0}+c_1 \cdot\int_0^1 \EE\bigl[|X_u|^{p_0-2}  \cdot (1+X_u^2)\bigr]\, du\Bigr)^\gamma \\ & \le c_2\cdot  \Bigl(1 + \sup_{u\in[0,1]}\EE\bigl[|X_u|^{p_0}\bigr]\Bigr)^\gamma.
\end{aligned}
\end{equation}
By Fatou's lemma and~\eqref{tmg02} we conclude from~\eqref{tmg10i} that
\begin{align*}
\EE\bigl[\,\sup_{t\in[0,1]}|X_{t}|^{\gamma p_0}\bigr]  &= \EE\bigl[\liminf_{n\to\infty}\,\sup_{t\in[0,1]} |X_{t\wedge \tau_n}|^{\gamma p_0}\bigr] \le \liminf_{n\to\infty} \EE\bigl[\,\sup_{t\in[0,1]} |X_{t\wedge \tau_n}|^{\gamma p_0}\bigr] < \infty,
\end{align*}
which finishes the proof of~\eqref{tmg02a}.

We proceed with the proof of~\eqref{tmg03}.  Assume $p_0\ge \ell_\mu+2$, put $p=2p_0/(\ell_\mu+2)$ and note that $p\in[2,p_0)$. Fix $\delta\in [0,1]$ and $s\in[0,1-\delta]$.
 Throughout the following we use $c,c_1,c_2,...\in(0,\infty)$ to denote positive constants that may change their values in every appearance but neither depend on $\delta$ nor on $s$. By It\^{o}'s formula,  for all
$t\in [s,s+\delta]$,
\begin{equation}\label{tmg050}
\begin{aligned}
|X_t-X_s|^{p} & = \int_s^t p\cdot |X_u-X_s|^{p-2}\cdot \bigl( (X_u-X_s)\cdot \mu(X_u) + \frac{1}{2}  (p-1) \cdot \sigma^2(X_u)\bigr)\, du \\
& \qquad +  \int_s^t p \cdot (X_u-X_s)|X_u-X_s|^{p-2}\sigma(X_u)\, dW_u.
\end{aligned}
\end{equation}
By (A1) and~\eqref{tmg3} there exists $c\in (0,\infty)$ such that for all $u\in[s,s+\delta]$,
\begin{equation}\label{tmg05a}
\begin{aligned}
(X_u-X_s)\cdot \mu(X_u) + \frac{1}{2}  (p-1) \cdot\sigma^2(X_u) & =
 X_u\mu(X_u) + \frac{1}{2}  (p-1)\cdot \sigma^2(X_u) - X_s\mu(X_u)\\
& \le  X_u\mu(X_u) + \frac{1}{2}  (p_0-1)\cdot \sigma^2(X_u) + |X_s|\cdot |\mu(X_u)| \\
&  \le c\cdot \bigl( 1+X_u^2 + |X_s|\cdot (1 + |X_u|^{\ell_\mu+1})\bigr).
\end{aligned}
\end{equation}
Let us define
\[
Y = \sup_{s\le t\le s+\delta} |X_t-X_s|^p
\]
and note that
\begin{equation}\label{tmg05c}
\EE[Y] <\infty
\end{equation}
due to~\eqref{tmg02a} and the fact that $p<p_0$.
 Inserting~\eqref{tmg05a} into~\eqref{tmg050} yields that there exists $c\in (0,\infty)$ such that
\begin{equation}\label{tmg05b}
\begin{aligned}
Y & \le  c\cdot \,\int_s^{s+\delta} |X_u-X_s|^{p-2}\cdot  \bigl( 1+X_u^2 + |X_s|\cdot(1 + |X_u|^{\ell_\mu+1})\bigr)\, du \\
& \qquad +  \sup_{s\le t\le s+\delta}\Bigl|\int_s^t p \cdot (X_u-X_s)\cdot |X_u-X_s|^{p-2}\cdot \sigma(X_u)\, dW_u\Bigr|
\end{aligned}
\end{equation}
and employing the Burkholder-Davis-Gundy inequality and~\eqref{tmg2} we therefore conclude that there exist $c_1, c_2\in (0,\infty)$ such that
\begin{equation}\label{tmg05d}
\begin{aligned}
\EE[Y] & \le c_1\cdot  \EE\Bigl[Y^{(p-2)/p}\cdot \int_s^{s+\delta}  \bigl( 1+X_u^2 + |X_s|\cdot (1 + |X_u|^{\ell_\mu+1})\bigr)\, du\Bigr]\\& \qquad + p\cdot \EE\Bigl[\Bigl(\int_s^{s+\delta} |X_u-X_s|^{2p-2}\sigma^2(X_u)\, du\Bigr)^{1/2}\Bigr]\\
& \le c_2\cdot \EE\Bigl[Y^{(p-2)/p}\cdot \int_s^{s+\delta}  \bigl( 1+X_u^2 + |X_s|\cdot (1 + |X_u|^{\ell_\mu+1})\bigr)\, du\Bigr] \\
&  \qquad + c_1\cdot \EE\Bigl[Y^{(p-1)/p}\cdot\Bigl(\int_s^{s+\delta} (1+|X_u|^{2\ell_\sigma+2})\, du\Bigr)^{1/2}\Bigr].
\end{aligned}
\end{equation}

By the H\"older inequality,
\begin{equation}\label{tmg05e}
\begin{aligned}
 & \EE\Bigl[Y^{(p-2)/p}\cdot \int_s^{s+\delta}  \bigl( 1+X_u^2 + |X_s|\cdot (1 + |X_u|^{\ell_\mu+1})\bigr)\, du\Bigr]\\
 & \qquad\qquad \le \EE[Y]^{(p-2)/p} \cdot\EE\Bigl[\Bigl(\int_s^{s+\delta}  \bigl( 1+X_u^2 + |X_s|\cdot (1 + |X_u|^{\ell_\mu+1})\bigr)\, du\Bigr)^{p/2}\Bigr]^{2/p} \\
 & \qquad\qquad \le  \EE[Y]^{(p-2)/p} \cdot\delta^{(p-2)/p} \cdot\Bigl( \int_s^{s+\delta}  \EE\bigl[\bigl( 1+X_u^2 + |X_s|\cdot(1 + |X_u|^{\ell_\mu+1})\bigr)^{p/2}\bigr]\, du\Bigr)^{2/p}.
\end{aligned}
\end{equation}
Moreover, using~\eqref{tmg02} we obtain that there exist $c_1,c_2,c_3\in (0,\infty)$ such that
\begin{equation}\label{tmg05f}
\begin{aligned}
& \sup_{u\in[0,1]}  \EE\bigl[\bigl( 1+X_u^2 + |X_s|\cdot (1 + |X_u|^{\ell_\mu+1})\bigr)^{p/2}\bigr]\\
& \qquad \le c_1\cdot  \sup_{u\in[0,1]} \bigl(1 + \EE[|X_u|^p] + \EE[|X_s|^{p/2}\cdot (1+|X_u|^{(\ell_\mu+1)p/2})]\bigr)\\
& \qquad \le c_2 \cdot\sup_{u\in[0,1]} \bigl(1 + \EE[|X_u|^p] + \EE[|X_s|^{p_0}]^{1/(\ell_\mu+2)}\cdot\bigl(1+\EE[|X_u|^{p_0}]^{(\ell_\mu+1)/(\ell_\mu+2)}\bigr)\bigr)\leq c_3.
\end{aligned}
\end{equation}
 Combining~\eqref{tmg05e} with~\eqref{tmg05f} we see that there exists $c\in (0,\infty)$ such that
\begin{equation}\label{tmg05g}
 \EE\Bigl[Y^{(p-2)/p}\cdot \int_s^{s+\delta}  \bigl( 1+X_u^2 + |X_s|\cdot (1 + |X_u|^{\ell_\mu+1})\bigr)\, du\Bigr] \le
c \cdot \EE[Y]^{(p-2)/p} \cdot \delta.
\end{equation}

Similarly, by  the H\"older inequality, the fact that $\ell_\sigma\le \ell_\mu/2$ and \eqref{tmg02} there exist $c_1, c_2\in (0,\infty)$ such that
\begin{equation}\label{tmg05h}
\begin{aligned}
& \EE\Bigl[Y^{(p-1)/p}\cdot \Bigl(\int_s^{s+\delta} (1+|X_u|^{2\ell_\sigma+2})\, du\Bigr)^{1/2}\Bigr]\\
& \qquad\qquad \le \EE[Y]^{(p-1)/p} \cdot\delta^{(p-2)/(2p)}\cdot \Bigl( \int_s^{s+\delta}\EE\bigl[ (1+|X_u|^{2\ell_\sigma+2})^{p/2}\bigr]\, du\Bigr)^{1/p}\\
& \qquad\qquad \le c_1\cdot  \EE[Y]^{(p-1)/p} \cdot\delta^{(p-2)/(2p)}\cdot \Bigl( \int_s^{s+\delta}\bigl(1+ \sup_{u\in [0,1]}\EE[|X_u|^{p_0}]\bigr)\, du\Bigr)^{1/p}\\
& \qquad\qquad \le c_2\cdot \EE[Y]^{(p-1)/p} \cdot \sqrt \delta.
\end{aligned}
\end{equation}

Combining~\eqref{tmg05d} with~\eqref{tmg05g} and~\eqref{tmg05h} and observing~\eqref{tmg05c} we conclude that there exists $c\in [1,\infty)$ such that
\begin{equation}\label{tmg05i}
\EE[Y]^{2/p} \le c\cdot ( \delta +   \sqrt \delta\cdot \EE[Y]^{1/p}) \le (c \sqrt \delta)^2 +  2c\sqrt \delta\cdot\EE[Y]^{1/p}.
\end{equation}
 Thus,
\[
\bigl(\EE[Y]^{1/p} - c \sqrt\delta\bigr)^2 \le 2(c\sqrt\delta)^2,
\]
which yields
\[
\EE[Y]^{2/p} \le 2\bigl(\EE[Y]^{1/p} - c\sqrt\delta\bigr)^2  + 2(c\sqrt\delta)^2 \le 6 c^2\, \delta
\]
and hereby completes the proof of~\eqref{tmg03}.
\end{proof}

For later purposes we list a number of properties of the functions $\mu_n$ and $\sigma_n$ using the assumptions (A1), (A2), (A2'), (A3) on the coefficients $\mu$ and $\sigma$.

\begin{lemma}\label{tamedcoeff1}\mbox{}\\[-.5cm]
	\begin{itemize}
		\item[(i)] Assume (A2)(ii) and (A3). Then, there exists $c\in (0,\infty)$ such that, for all $n\in\N$ and $x\in\R$,
\begin{equation}
\begin{aligned}\label{tmg004}
|\sigma_n(x)|^2 & \le c\cdot\min\bigl(\sqrt{n}\,(1+x^2),\sigma^2(x)\bigr),\\
|\mu_n(x)| & \le c\cdot\min\bigl(\sqrt{n}\,(1+|x|),|\mu(x)|\bigr).
\end{aligned}
\end{equation}
	\item[(ii)] Assume (A1). Then, there exists $c\in (0,\infty)$ such that, for all $n\in\N$ and $x\in\R$,
\begin{equation}\label{tmg005}
2x\cdot\mu_n(x) + (p_0-1)\cdot (\sigma_n(x))^2 \le c\cdot (1+x^2).
\end{equation}
	\item[(iii)] Assume (A3).
Then, there exists $c\in (0,\infty)$ such that, for all $n\in\N$ and $x,y\in\R$,
\begin{equation}\label{tmg0005}
|\sigma_n(x) -\sigma_n(y)| \le c\cdot   (1+ |x|^{\ell_\sigma} + |y|^{\ell_\sigma})\cdot (n^{-1/2} +|x-y|).
\end{equation}
\item[(iv)] Assume (A2')(ii).
Then, there exists $c\in (0,\infty)$ such that, for all $n\in\N$ and $x,y\in\R$,
\begin{equation}\label{tmg0006}
	|\mu_n(x) -\mu_n(y)| \le c\cdot   (1+ |x|^{\ell_\mu} + |y|^{\ell_\mu})\cdot (n^{-1/2} +|x-y|).
	\end{equation}
\item[(v)] Assume (A2)(ii). Then, there exists $c\in (0,\infty)$ such that, for all $n\in\N$ and $x\in\R$,
\begin{equation}\label{tmg0007}
	|\mu_n(x) -\mu(x)| \le \frac{c}{\sqrt{n}} \cdot   (1+ |x|^{2\ell_\mu+1})
\end{equation}
\item[(vi)] Assume (A3). Then, there exists $c\in (0,\infty)$ such that, for all $n\in\N$ and $x\in\R$,
\begin{equation}\label{tmg0008}
	|\sigma_n(x) -\sigma(x)| \le \frac{c}{\sqrt{n}} \cdot   (1+ |x|^{\ell_\mu+\ell_\sigma + 1})\,\text{ and }\, 	|\sigma^2_n(x) -\sigma^2(x)| \le \frac{c}{\sqrt{n}} \cdot   (1+ |x|^{\ell_\mu+2\ell_\sigma + 2}).
\end{equation}
	\end{itemize}
\end{lemma}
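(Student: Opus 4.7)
The plan is to dispatch each of the six items by direct algebraic manipulation starting from the definitions $\mu_n=\mu/D_n$ and $\sigma_n=\sigma/D_n$, where I write $D_n(x):=1+n^{-1/2}|x|^{\ell_\mu}\ge 1$. All estimates lean on the polynomial growth bounds $|\mu(x)|\le c(1+|x|^{\ell_\mu+1})$ and $|\sigma(x)|\le c(1+|x|^{\ell_\sigma+1})$ from Remark~\ref{rem1}, together with the assumption $\ell_\sigma\le\ell_\mu/2$. The first halves of (i) and the bounds $|\mu_n|\le|\mu|$, $\sigma_n^2\le\sigma^2$ are immediate from $D_n\ge 1$. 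The companion bounds $|\mu_n(x)|\le c\sqrt{n}(1+|x|)$ and $\sigma_n^2(x)\le c\sqrt{n}(1+x^2)$ follow by inserting the polynomial growth of $\mu$ and $\sigma$ and splitting into the regimes $n^{-1/2}|x|^{\ell_\mu}\le 1$ (where $D_n\in[1,2]$) and $n^{-1/2}|x|^{\ell_\mu}>1$ (where $D_n\ge n^{-1/2}|x|^{\ell_\mu}$, so excess powers cancel and the $\sqrt n$ factor emerges); the hypothesis $\ell_\sigma\le\ell_\mu/2$ is essential in the $\sigma_n$ estimate. Items (v) and (vi) come from the identity $\mu_n(x)-\mu(x)=-n^{-1/2}|x|^{\ell_\mu}\mu(x)/D_n(x)$, whose absolute value is at once bounded by $n^{-1/2}|x|^{\ell_\mu}|\mu(x)|\le cn^{-1/2}(1+|x|^{2\ell_\mu+1})$; the analogous identity for $\sigma$ gives the first half of (vi), and the second half follows by factoring $\sigma_n^2-\sigma^2=(\sigma_n-\sigma)(\sigma_n+\sigma)$ and using $|\sigma_n+\sigma|\le 2|\sigma|\le c(1+|x|^{\ell_\sigma+1})$.

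I expect part (ii) to be the main obstacle. The plan is to write
\[
2x\mu_n(x)+(p_0-1)\sigma_n^2(x)=\frac{2x\mu(x)}{D_n(x)}+\frac{(p_0-1)\sigma^2(x)}{D_n(x)^2}
\]
and then case-split on the sign of $x\mu(x)$. If $x\mu(x)\ge 0$, both summands are non-negative and dividing by $D_n\ge 1$ can only shrink them, so the sum is bounded by $2x\mu(x)+(p_0-1)\sigma^2(x)\le c(1+x^2)$ by (A1). If $x\mu(x)<0$, use (A1) in the rearranged form $(p_0-1)\sigma^2(x)\le c(1+x^2)-2x\mu(x)$ to replace $\sigma^2$ in the second summand. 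After combining over the common denominator $D_n(x)^2$ the expression collapses to
\[
\frac{2x\mu(x)\,(D_n(x)-1)+c(1+x^2)}{D_n(x)^2};
\]
since $x\mu(x)<0$ and $D_n(x)-1=n^{-1/2}|x|^{\ell_\mu}\ge 0$, the first term in the numerator is non-positive, so the whole expression is bounded by $c(1+x^2)/D_n(x)^2\le c(1+x^2)$. This algebraic cancellation is the crux; the rest of (ii) is mechanical.

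Parts (iii) and (iv) use the decomposition
\[
\sigma_n(x)-\sigma_n(y)=\frac{\sigma(x)-\sigma(y)}{D_n(x)}+\sigma(y)\Bigl(\frac{1}{D_n(x)}-\frac{1}{D_n(y)}\Bigr),
\]
with the analogous one for $\mu_n$. The first summand is controlled by (A3) (resp.\ (A2$'$)(ii)) and $D_n\ge 1$, producing the $|x-y|$-part of the bound. The second summand rewrites as $|\sigma_n(y)|\cdot|D_n(x)-D_n(y)|/D_n(x)$ where $|D_n(x)-D_n(y)|=n^{-1/2}\bigl||x|^{\ell_\mu}-|y|^{\ell_\mu}\bigr|$; I would then combine the pointwise bounds on $|\sigma_n|$ (resp.\ $|\mu_n|$) from part (i) with the elementary estimate $\bigl||x|^{\ell_\mu}-|y|^{\ell_\mu}\bigr|\le c\max(|x|,|y|)^{\ell_\mu-1}|x-y|$ (a consequence of the mean value theorem for $\ell_\mu\ge 1$ and of concavity of $t\mapsto t^{\ell_\mu}$ for $\ell_\mu<1$) to extract the $n^{-1/2}$-contribution. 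A minor case distinction according to the relative sizes of $n^{-1/2}|x|^{\ell_\mu}$, $n^{-1/2}|y|^{\ell_\mu}$ and $1$ allows the factor $\sqrt n$ in the bound on $|\sigma_n|$ or $|\mu_n|$ to cancel the $n^{-1/2}$ from $|D_n(x)-D_n(y)|$ against residual polynomial powers, yielding the claimed form $c(1+|x|^{\ell_\sigma}+|y|^{\ell_\sigma})(n^{-1/2}+|x-y|)$ (and the analogue for $\mu_n$). This step is tedious but involves only elementary inequalities.
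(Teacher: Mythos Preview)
Your approach is correct and matches the paper's in all essentials: the same growth-versus-denominator bookkeeping for (i), the identical decomposition $\sigma_n(x)-\sigma_n(y) = (\sigma(x)-\sigma(y))/D_n(x) + \sigma(y)\bigl(1/D_n(x)-1/D_n(y)\bigr)$ for (iii)/(iv), and the same identities for (v)/(vi). The paper organises the casework in (iii) by $\ell_\mu\ge 1$ versus $\ell_\mu\in(0,1)$ (and for the latter, by whether $|x|,|y|\le 1$) rather than by the sizes of $n^{-1/2}|x|^{\ell_\mu}$, but this is a matter of bookkeeping, not substance.

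One simplification in the paper's treatment of (ii) is worth recording: the sign case-split you propose is unnecessary. Since $\sigma^2(x)\ge 0$ and $D_n(x)\ge 1$, one has $(p_0-1)\sigma^2(x)/D_n(x)^2 \le (p_0-1)\sigma^2(x)/D_n(x)$, whence
\[
2x\mu_n(x) + (p_0-1)\sigma_n^2(x) \;\le\; \frac{2x\mu(x)+(p_0-1)\sigma^2(x)}{D_n(x)} \;\le\; c(1+x^2)
\]
directly from (A1), the last step using only that the right-hand side of (A1) is non-negative. This replaces your two-case argument by a single line.
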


\begin{proof}
First, we prove~\eqref{tmg004}.  The estimates $|\sigma_n|\le |\sigma|$ and $|\mu_n|\le |\mu|$ are immediate from the definition of $\mu_n$ and $\sigma_n$. Moreover, by~\eqref{tmg2} and~\eqref{tmg3} and the fact that $\ell_\sigma \le \ell_\mu/2$ we obtain that there exists $c\in (0,\infty$) such that, for all $x\in\R$ and all $n\in\N$,
\[
|\sigma_n(x)|^2 \le c\cdot \frac{(1+|x|^{\ell_\mu+2})}{(1+n^{-1/2}|x|^{\ell_\mu})^2}\quad\text{ and }\quad |\mu_n(x)| \le c\cdot \frac{(1+|x|^{\ell_\mu+1})}{1+n^{-1/2}|x|^{\ell_\mu}}.
\]
For all $x\in\R$ and all $n\in\N$,
\[
\frac{1+|x|^{\ell_\mu+2}}{(1+n^{-1/2}|x|^{\ell_\mu})^2} =n^{1/2}\Bigl(\frac{n^{-1/2}}{(1+n^{-1/2}|x|^{\ell_\mu})^2}+\frac{n^{-1/2}|x|^{\ell_\mu}}{(1+n^{-1/2}|x|^{\ell_\mu})^2}\cdot x^2\Bigr)
\le  n^{1/2} (1  +  x^2)
\]
and, similarly,
\[
\frac{1+|x|^{\ell_\mu+1}}{1+n^{-1/2}|x|^{\ell_\mu}} \le  n^{1/2} (1  +  |x|),
\]
which finishes the proof of~\eqref{tmg004}.

Clearly, \eqref{tmg005} follows from
\begin{align*}
& 2x\cdot\mu_n(x) + (p_0-1)\cdot (\sigma_n(x))^2 \\
& \qquad\qquad  = \frac{2x\cdot\mu(x)}{1+n^{-1/2}|x|^{\ell_\mu}} + \frac{(p_0-1)\cdot\sigma^2(x)}{(1+n^{-1/2}|x|^{\ell_\mu})^2} \le \frac{2x\cdot\mu(x)+(p_0-1)\cdot\sigma^2(x) }{1+n^{-1/2}|x|^{\ell_\mu}}
\end{align*}
and (A1).

Next, we prove \eqref{tmg0005}. For $n\in\N$ and $x,y\in\R$ put
\[
f_n(x,y)=\frac{n^{-1/2}(|y|^{\ell_\mu}-|x|^{\ell_\mu})\cdot \sigma(y)}{(1+n^{-1/2}|x|^{\ell_\mu})\cdot (1+n^{-1/2}|y|^{\ell_\mu})}.
\]
Using (A3) we obtain that there exists $c\in(0, \infty)$ such that, for all $n\in\N$ and all $x,y\in\R$,
\begin{equation}\label{ly3}
|\sigma_n(x)-\sigma_n(y)|=\Bigl|\frac{\sigma(x)-\sigma(y)}{1+n^{-1/2}|x|^{\ell_\mu}}+f_n(x,y)\Bigr|\leq c\cdot (1+|x|^{\ell_{\sigma}}+|y|^{\ell_{\sigma}})\cdot |x-y|+|f_n(x,y)|.
\end{equation}

We next estimate $|f_n(x,y)|$. First, assume that $\ell_\mu\in[1, \infty)$.
Clearly, for all $n\in\N$ and all $x,y\in\R$,
\begin{equation}\label{uvw3}
|f_n(x,y)| \le \frac{ ||y|^{\ell_\mu}-|x|^{\ell_\mu}|\cdot |\sigma(y)|}{1+|x|^{\ell_\mu}+|y|^{\ell_\mu}}.
\end{equation}
Since $\ell_\mu\in[1, \infty)$ we obtain there exists $c\in(0, \infty)$ such that, for all $x,y\in\R$,
\begin{equation}\label{uvw1}
	\begin{aligned}
 ||y|^{\ell_\mu}-|x|^{\ell_\mu}|&  \leq \ell_\mu\cdot |x-y|\cdot(|x|^{\ell_\mu-1}+|y|^{\ell_\mu-1}).
\end{aligned}		
\end{equation}
Moreover, employing \eqref{tmg2} and  Young's inequality we conclude that there  exist $c_1, c_2\in(0, \infty)$ such   that, for all $x,y\in\R$,
\begin{equation}\label{uvw2}
	\begin{aligned}
&(|x|^{\ell_\mu-1}+|y|^{\ell_\mu-1})\cdot |\sigma(y)| \\
&\qquad \qquad \leq c_1\cdot (1+|x|^{\ell_\mu-1}+|y|^{\ell_\mu-1})\cdot (1+|y|^{\ell_\sigma+1})\\
&\qquad \qquad=c_1\cdot(1+|x|^{\ell_\mu-1}+|y|^{\ell_\mu-1}+|y|^{\ell_\sigma+1}+|x|^{\ell_\mu-1}|y|^{\ell_\sigma+1}+|y|^{\ell_\mu+\ell_\sigma})\\
&\qquad \qquad\leq c_2\cdot(1+|x|^{\ell_\mu+\ell_\sigma}+|y|^{\ell_\mu+\ell_\sigma})\\
&\qquad \qquad\leq c_2\cdot (1+|x|^{\ell_\mu}+|y|^{\ell_\mu})\cdot (1+|x|^{\ell_\sigma}+|y|^{\ell_\sigma}).
\end{aligned}		
\end{equation}
Combining~\eqref{uvw3} to~\eqref{uvw2} yields that there exists $c\in(0, \infty)$ such that, for all $n\in\N$ and all $x,y\in\R$,
\begin{equation}\label{yl1}
|f_n(x,y)| \le c\cdot (1+|x|^{\ell_\sigma}+|y|^{\ell_\sigma})\cdot |x-y|.
\end{equation}

Next, assume that $\ell_\mu\in(0, 1)$. Observing (A3)   we obtain that  there  exists $c_1,c_2\in(0, \infty)$ such that, for all $n\in\N$ and all $x,y\in\R$ with $|x|\leq 1$ and $|y|\leq 1$,
\begin{equation}\label{yl2}
|f_n(x,y)|\leq 2\cdot n^{-1/2}\cdot |\sigma(y)| \le c_1\cdot n^{-1/2}\cdot (1+|x|^{\ell_\sigma+1}) \le c_2\cdot n^{-1/2}\cdot (1+|x|^{\ell_\sigma}).
\end{equation}
Moreover, for all $x,y\in\R$ with $|x|> 1$ or $|y|> 1$ we have
\[
\bigl||y|^{\ell_\mu}-|x|^{\ell_\mu}\bigr|\leq \frac{|y^{2}-x^{2}|}{|y|^{2-\ell_\mu}+|x|^{2-\ell_\mu}}\leq\frac{|x-y|\cdot (|x|+|y|)}{|y|^{2-\ell_\mu}+|x|^{2-\ell_\mu}}
\]
as well as
\begin{align*}
&(|y|^{2-\ell_\mu}+|x|)^{2-\ell_\mu})\cdot (1+n^{-1/2}|x|^{\ell_\mu})\cdot (1+n^{-1/2}|y|^{\ell_\mu})\\
&\qquad\qquad\geq n^{-1/2}(x^2+y^2)\geq \frac{1}{2}n^{-1/2}(|x|+|y|)^2\\
& \qquad\qquad \geq \frac{1}{4}n^{-1/2}(|x|+|y|)\cdot(1+|y|).
\end{align*}
Employing \eqref{tmg2} we thus obtain  that there  exist $c\in(0, \infty)$ such that, for all $n\in\N$ and all $x,y\in\R$ with $|x|> 1$ or $|y|> 1$
\begin{equation}\label{yl3}
	\begin{aligned}
|f_n(x,y)| & \leq n^{-1/2}||y|^{\ell_\mu}-|x|^{\ell_\mu}|\cdot |\sigma(y)|\le 4 |x-y| \cdot\frac{ 1+|y|^{\ell_\sigma+1}}{1+|y|} \leq c\cdot (1+|y|^{\ell_\sigma})\cdot |x-y|.
\end{aligned}
\end{equation}
Combining \eqref{ly3} with \eqref{yl1}, \eqref{yl2} and \eqref{yl3}  completes the proof of~\eqref{tmg0005}.

The estimate~\eqref{tmg0006} is proven in exactly the same way as the estimate~\eqref{tmg0005} by replacing $\sigma$ with $\mu$ and $\ell_\sigma$ with $\ell_\mu$.

Using~\eqref{tmg3} we obtain that there exist $c_1,c_2\in (0,\infty)$ such that for all $n\in\N$ and all $x\in \R$,
\[
|\mu_n(x)-\mu(x)| = \frac{1}{\sqrt{n}}\cdot  \frac{|x|^{\ell_\mu} |\mu(x)|}{1+n^{-1/2}|x|^{\ell_\mu}} \le  \frac{c_1}{\sqrt{n}}\cdot |x|^{\ell_\mu} (1+|x|^{\ell_\mu+1}) \le  \frac{c_2}{\sqrt{n}}\cdot  (1+|x|^{2\ell_\mu+1}),
\]
which shows ~\eqref{tmg0007}.

The first estimate in~\eqref{tmg0008} is proven in exactly the same way as ~\eqref{tmg0007} by replacing $\mu_n$ and $\mu$ with $\sigma_n$ and $\sigma$, respectively, and using~\eqref{tmg2} in place of~\eqref{tmg3}. Using the first estimate in~\eqref{tmg0008}, ~\eqref{tmg004} and~\eqref{tmg2} we obtain that there exist $c_1,c_2\in (0,\infty)$ such that for all $n\in\N$ and all $x\in \R$,
\begin{align*}
	|\sigma^2_n(x) -\sigma^2(x)|&  \le 	|\sigma_n(x) -\sigma(x)| \cdot(	|\sigma_n(x)|+|\sigma(x)|)\\
	& \le \frac{c_1}{\sqrt{n}}\cdot  (1+|x|^{\ell_\mu+\ell_\sigma+1}) \cdot (1+ |x|^{\ell_\sigma +1}) \le   \frac{c_2}{\sqrt{n}} \cdot   (1+ |x|^{\ell_\mu+2\ell_\sigma + 2}).
\end{align*}
which proves the second estimate  in~\eqref{tmg0008}.
\end{proof}

For technical reasons, we provide $L_p$-estimates and  further relevant properties
of the time-continuous tamed Euler scheme~\eqref{te} for the SDE~\eqref{sde0}
 dependent on the initial value $x_0$. To be formally precise, for every $x\in\R$, let
$X^x$ denote the unique strong solution of the SDE
\begin{equation}\label{sde00}
\begin{aligned}
dX^x_t & = \mu(X^x_t) \, dt + \sigma(X^x_t) \, dW_t, \quad t\in [0,\infty),\\
X^x_0 & = x,
\end{aligned}
\end{equation}
and, for all $x\in\R$ and $n\in\N$, let $\eul_{n}^x=(\eul_{n,t}^x)_{t\in[0,1]}$  denote the time-continuous tamed Euler scheme on $[0,1]$ with step-size $1/n$ associated to the SDE \eqref{sde00}, i.e., $\eul_{n,0}^x=x$ and
\[
\eul_{n,t}^{x}=\eul_{n,\utn}^{x}+\mu_n(\eul_{n,\utn}^{x})\cdot (t-\utn)+\sigma_n(\eul_{n,\utn}^{x})\cdot (W_t-W_{\utn})
\]
for $t\in (i/n,(i+1)/n]$ and $i\in\{0,\ldots,n-1\}$.

In particular, $X=X^{x_0}$ and, for every $n\in\N$,  $\widehat X_n = \eul_{n}^{x_0}$. Furthermore,   the integral representation
\begin{equation}\label{intrep}
\eul_{n,t}^{x}=x+\int_0^t \mu_n(\eul_{n,\usn}^{x})\, ds+\int_0^t\sigma_n(\eul_{n,\usn}^{x})\,dW_s
\end{equation}
holds for every $n\in\N$ and $t\in[0,1]$.

We have the following $L_p$-estimates  of  the time-continuous tamed Euler scheme $\eul_{n}^{x}$ and of its increments.

\begin{lemma}\label{eulprop}
Let $\mu$ and $\sigma$ satisfy (A1) to (A3). Then,  there exists $c\in (0,\infty)$ such that, for all $x\in\R$,
\begin{equation}\label{tmg002a}
\sup_{n\in\N}\sup_{t\in[0,1]}\EE\bigl[\,|\eul_{n,t}^{x}|^{p_0}\bigr]^{1/p_0} \leq c\cdot(1+|x|).
\end{equation}
Furthermore, for all $p\in [0,p_0)$  there exists $c\in (0,\infty)$ such that, for  all $x\in\R$,
\begin{equation}\label{tmg003a}
\sup_{n\in\N}\EE\bigl[\,\sup_{t\in[0,1]}|\eul_{n,t}^{x}|^{p}\bigr]^{1/p} \leq c\cdot(1+|x|).
\end{equation}
Moreover,  there exists  $c\in(0, \infty)$ such that, for  all $x\in\R$,
\begin{equation}\label{tmg003b}
	\sup_{n\in\N}\sup_{t\in[0,1]}\EE\bigl[ |\eul_{n,t}^x-\eul_{n,\utn}^x|^{p_0}\bigr]^{1/p_0}\leq c\cdot (1+|x|)\cdot \frac{1}{n^{1/4}}.
\end{equation}
Finally, if $p_0\ge \ell_\sigma+1$ then there exists  $c\in(0, \infty)$ such that
\begin{equation}\label{tmg003}
\sup_{n\in\N}\sup_{t\in[0,1]}\EE\bigl[ |\eul_{n,t}^x-\eul_{n,\utn}^x|^{p_0/(\ell_\sigma+1)}\bigr]^{(\ell_\sigma+1)/p_0}\leq c\cdot (1+|x|^{\ell_\sigma+1})\cdot \frac{1}{\sqrt{n}}.
\end{equation}
\end{lemma}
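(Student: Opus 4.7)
The plan is to prove the four estimates in order, with~\eqref{tmg002a} being the workhorse from which \eqref{tmg003a}--\eqref{tmg003} follow relatively easily. Throughout I would mimic the structure of the proof of Lemma~\ref{solprop}, substituting $\mu,\sigma$ by the tamed coefficients $\mu_n,\sigma_n$ and invoking the properties established in Lemma~\ref{tamedcoeff1} in place of (A1)--(A3).

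For~\eqref{tmg002a}, the idea is to apply It\^o's formula to $|\eul_{n,t}^x|^{p_0}$. Between grid points $\eul_n^x$ is an It\^o process with drift $\mu_n(\eul_{n,\usn}^x)$ and diffusion $\sigma_n(\eul_{n,\usn}^x)$, so the drift term of the expansion reads
\[
p_0\int_0^t |\eul_{n,s}^x|^{p_0-2}\bigl(\eul_{n,s}^x\cdot\mu_n(\eul_{n,\usn}^x)+\tfrac12(p_0-1)\sigma_n^2(\eul_{n,\usn}^x)\bigr)\,ds.
\]
The key manoeuvre is to write $\eul_{n,s}^x=\eul_{n,\usn}^x+(\eul_{n,s}^x-\eul_{n,\usn}^x)$ in the first factor. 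The part involving $\eul_{n,\usn}^x\cdot\mu_n(\eul_{n,\usn}^x)$ combines with $\sigma_n^2(\eul_{n,\usn}^x)$ and is controlled by the tamed coercivity~\eqref{tmg005} evaluated at the grid point, while the remainder involving $(\eul_{n,s}^x-\eul_{n,\usn}^x)\cdot\mu_n(\eul_{n,\usn}^x)$ is handled using the explicit one-step representation of $\eul_{n,s}^x-\eul_{n,\usn}^x$ together with the tameness bounds $|\mu_n|,\,\sigma_n^2\le c\sqrt{n}(1+|\cdot|^2)$ from~\eqref{tmg004}; the $\sqrt n$ factor is precisely compensated by the $O(1/n)$ step length and by $\EE[(W_s-W_{\usn})^2]\le 1/n$. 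After localising by a sequence of stopping times $\tau_m\uparrow\infty$ to kill the local martingale, Young's inequality and Gronwall's lemma close the estimate uniformly in $n$, and Fatou's lemma yields~\eqref{tmg002a}. This is essentially the by-now classical tamed-Euler moment argument of~\cite{Sabanis2013ECP, Sabanis2016}. Estimate~\eqref{tmg003a} is then obtained from the same It\^o expansion by the Gy\"ongy--Krylov-type inequality~\cite[Lemma 3.2]{gk03b}, exactly as in the passage from~\eqref{tmg02} to~\eqref{tmg02a} in the proof of Lemma~\ref{solprop}.

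For~\eqref{tmg003b} and~\eqref{tmg003}, I would exploit the one-step identity
\[
\eul_{n,t}^x-\eul_{n,\utn}^x=\mu_n(\eul_{n,\utn}^x)\cdot(t-\utn)+\sigma_n(\eul_{n,\utn}^x)\cdot(W_t-W_{\utn}),
\]
the triangle inequality in $L^p$, and the fact that $\eul_{n,\utn}^x$ is $\mathcal F_{\utn}$-measurable, so that the Brownian increment factors out. For~\eqref{tmg003b}, I would apply the full tameness bounds from~\eqref{tmg004} together with $t-\utn\le 1/n$, $\EE[|W_t-W_{\utn}|^{p_0}]^{1/p_0}\le c/\sqrt n$, and the moment bound~\eqref{tmg002a}: the drift part contributes $O(n^{-1/2})(1+|x|)$ and the diffusion part $O(n^{-1/4})(1+|x|)$, giving the claimed rate. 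For~\eqref{tmg003}, the diffusion part is estimated using the bound $|\sigma_n|\le|\sigma|\le c(1+|\cdot|^{\ell_\sigma+1})$ obtained from the definition of $\sigma_n$ and~\eqref{tmg2}, combined with $\EE[|W_t-W_{\utn}|^{p_0/(\ell_\sigma+1)}]^{(\ell_\sigma+1)/p_0}\le c/\sqrt n$ and Jensen's inequality to convert $\EE[|\eul_{n,\utn}^x|^{p_0}]$ into a bound of the form $(1+|x|)^{p_0}$; the drift part is still treated via the $\sqrt n$-tameness bound on $\mu_n$ and the factor $t-\utn\le 1/n$; both parts then deliver the claimed rate $n^{-1/2}$ with the prefactor $(1+|x|^{\ell_\sigma+1})$.

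The principal technical obstacle is~\eqref{tmg002a}: the It\^o integrand depends on both $\eul_{n,s}^x$ and $\eul_{n,\usn}^x$, but the coercivity~\eqref{tmg005} only directly governs the value at the grid point, so the mismatch must be carefully absorbed by the tameness bounds. The discontinuities of $\mu$ play no role here; they will only surface later, in the error analysis underpinning Theorems~\ref{Thm1} and~\ref{Thm2}.
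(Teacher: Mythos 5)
Your treatment of \eqref{tmg003b} and \eqref{tmg003} matches the paper's and is fine, as is the overall plan of transplanting the It\^{o}-plus-Gronwall argument with the tamed coefficients. The gap is in your argument for \eqref{tmg002a}, specifically in how you absorb the remainder term $p_0|\eul^x_{n,s}|^{p_0-2}\mu_n(\eul^x_{n,\usn})\cdot(\eul^x_{n,s}-\eul^x_{n,\usn})$. Substituting the one-step representation produces two contributions. The drift$\times$drift part $|\eul^x_{n,s}|^{p_0-2}\mu_n^2(\eul^x_{n,\usn})(s-\usn)$ is indeed killed by the step length, as you say. But the cross term
\[
|\eul^x_{n,s}|^{p_0-2}\,\mu_n(\eul^x_{n,\usn})\,\sigma_n(\eul^x_{n,\usn})\,(W_s-W_{\usn})
\]
is \emph{linear}, not quadratic, in the Brownian increment, so the bound $\EE[(W_s-W_{\usn})^2]\le 1/n$ that you invoke is not the relevant one. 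Taking absolute values and using \eqref{tmg004} gives $|\mu_n\sigma_n|\le c\,n^{3/4}(1+|\cdot|^2)$ against $\EE[|W_s-W_{\usn}|]\le n^{-1/2}$, i.e.\ a divergent factor $n^{1/4}$; using instead $|\sigma_n|\le|\sigma|$ forces a spatial power beyond $p_0$ through H\"older, which is circular inside the Gronwall loop. The only way to close this is to exploit that $\eul^x_{n,\usn}$ is independent of $W_s-W_{\usn}$: the paper splits the cross term into $B_s$ (with $|\eul^x_{n,s}|^{p_0-2}$ replaced by $|\eul^x_{n,\usn}|^{p_0-2}$), which has expectation exactly zero, and a residual $C_s$ containing the difference $|\eul^x_{n,s}|^{p_0-2}-|\eul^x_{n,\usn}|^{p_0-2}$, which is controlled by the dedicated elementary inequality \eqref{tm8} (itself a case analysis over $p_0\in(2,3]$, $p_0\in[3,\infty)$, $|u|\le 1$, $|u|>1$). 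This cancellation-plus-residual step is the heart of the proof; it is exactly what closes the gap in \cite{Sabanis2016} for $p_0\in(2,4)$ noted in Remark~\ref{gap}, so appealing to the ``classical'' tamed-Euler argument does not substitute for it.

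A secondary consequence: the passage to \eqref{tmg003a} is not ``exactly as in Lemma~\ref{solprop}''. Because $B_s$ only has zero expectation at deterministic times, the pathwise bound needed for \cite[Lemma 3.2]{gk03b} requires rewriting $\int_0^t B_s\,ds$ by integration by parts as a stochastic integral plus a boundary term (see \eqref{o3}); the boundary term produces the factor $(1-c_1 n^{-\gamma/4})$ in \eqref{vvv3}, so that large $n$ and small $n$ must be treated separately, the latter via the crude bound \eqref{bbb2}. If you do not isolate $B_s$ in the first place, this step cannot be carried out.
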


\begin{proof}
For later purposes, we  note that by using Lemma~\ref{tamedcoeff1}(i)
 and Gronwall's inequality, it is straightforward to check that for all $n\in\N$ and all $q\in [0,\infty)$ there exists $c\in (0,\infty)$, which may depend on $n$ and $q$, such that, for all $x\in\R$,
\begin{equation}\label{bbb2}
\EE\Bigl[\sup_{t\in [0,1]}|\eul^{x}_{n, t}|^q\Bigr] \leq  c\cdot (1+|x|^q).
\end{equation}

Fix $n\in\N$ and $x\in\R$. Throughout the following we  use $c,c_1,c_2,...\in(0,\infty)$ to  denote positive constants that may change their values in every appearance but neither depend on $n$ nor on $x$.

We first prove~\eqref{tmg002a}. By  It\^{o}'s formula, for all stopping times $\tau$ with $\tau \le 1$,
\begin{align*}
\begin{aligned}
|\widehat X^x_{n,\tau}|^{p_0} & = |x|^{p_0}+p_0\cdot \int_0^\tau  \bigl( \widehat X^x_{n,s}|\widehat X^x_{n,s}|^{p_0-2}\mu_n(\widehat X^x_{n,\usn}) + \frac{1}{2}  (p_0-1) \cdot|\widehat X^x_{n,s}|^{p_0-2}\sigma_n^2(\widehat X^x_{n,\usn})\bigr)\, ds \\
& \qquad +  p_0\cdot \int_0^{\tau}  \widehat X^x_{n,s}|\widehat X^x_{n,s}|^{p_0-2}\sigma_n(\widehat X^x_{n,\usn})\, dW_s.
\end{aligned}
\end{align*}
Using Lemma~\ref{tamedcoeff1}(i) and  \eqref{bbb2} we obtain that
\[
\EE\bigl[\sup_{s\in[0,1]}\bigl(\widehat X^x_{n,s}|\widehat X^x_{n,s}|^{p_0-2}\sigma_n(\widehat X^x_{n,\usn})\bigr)
^2\bigr] < \infty,
\]
which implies that the stochastic process
\[
\Bigl(\int_0^{t} \widehat X^x_{n,s}|\widehat X^x_{n,s}|^{p_0-2}\sigma_n(\widehat X^x_{n,\usn})\, dW_s\Bigr)_{t\in[0,1]}
\]
is a martingale. Thus, for all stopping times $\tau$ with $\tau \le 1$,
\begin{equation}\label{tm2}
\begin{aligned}
\EE\bigl[|\widehat X^x_{n,\tau}|^{p_0}\bigr]
&  = |x|^{p_0}+p_0\cdot \EE\Bigl[\int_0^\tau \bigl( \widehat X^x_{n,s}|\widehat X^x_{n,s}|^{p_0-2}\mu_n(\widehat X^x_{n,\usn})\\
&\qquad\qquad\qquad\qquad\qquad + \frac{1}{2}  (p_0-1) \cdot|\widehat X^x_{n,s}|^{p_0-2}\sigma_n^2(\widehat X^x_{n,\usn})\bigr)\, du\Bigr].
\end{aligned}
\end{equation}

Using  (A1) and Lemma~\ref{tamedcoeff1}(i),(ii) we obtain that there exist $c_1,c_2\in (0,\infty)$ such that,  for all $s\in[0,1]$,
\begin{equation}\label{tm3}
\begin{aligned}
& \widehat X^x_{n,s}|\widehat X^x_{n,s}|^{p_0-2}\mu_n(\widehat X^x_{n,\usn}) + \frac{1}{2}  (p_0-1) \cdot|\widehat X^x_{n,s}|^{p_0-2}\sigma_n^2(\widehat X^x_{n,\usn}) \\
& \qquad = |\widehat X^x_{n,s}|^{p_0-2} \cdot\bigl(\widehat X^x_{n,\usn}\mu_n(\widehat X^x_{n,\usn}) + \frac{p_0-1}{2}\cdot \sigma_n^2(\widehat X^x_{n,\usn})+\mu_n(\widehat X^x_{n,\usn})\cdot(\widehat X^x_{n,s}- \widehat X^x_{n,\usn})\bigr)\\
&  \qquad \le  |\widehat X^x_{n,s}|^{p_0-2} \cdot\bigl( c_1\cdot(1+|\widehat X^x_{n,\usn}|^2)\\
& \qquad\qquad\qquad +\mu_n(\widehat X^x_{n,\usn})\cdot(\mu_n(\widehat X^x_{n,\usn})\cdot(s-\usn) + \sigma_n(\widehat X^x_{n,\usn})\cdot(W_s-W_{\usn})\bigr)\\
&  \qquad \le  c_2\cdot|\widehat X^x_{n,s}|^{p_0-2}\cdot (1+|\widehat X^x_{n,\usn}|^2) +  |\widehat X^x_{n,s}|^{p_0-2}
\cdot\mu_n\sigma_n(\widehat X^x_{n,\usn})\cdot(W_s-W_{\usn})\\
&  \qquad =  c_2\cdot A_s +  B_s +  C_s,
\end{aligned}
\end{equation}
where
\begin{equation}\label{tm4}
\begin{aligned}
A_s & = |\widehat X^x_{n,s}|^{p_0-2} \cdot(1+|\widehat X^x_{n,\usn}|^2),\\
B_s & = |\widehat X^x_{n,\usn}|^{p_0-2} \cdot\mu_n\sigma_n(\widehat X^x_{n,\usn})\cdot(W_s-W_{\usn}),\\
C_s & = (|\widehat X^x_{n,s}|^{p_0-2} -|\widehat X^x_{n,\usn}|^{p_0-2}) \cdot\mu_n\sigma_n(\widehat X^x_{n,\usn})\cdot(W_s-W_{\usn}).
\end{aligned}
\end{equation}

By Young's inequality, there exist $c_1, c_2\in (0,\infty)$ such that, for all $s\in[0,1]$,
\begin{equation}\label{tm5}
\EE[A_s] \le c_1\cdot \EE\bigl[ 1+|\widehat X^x_{n,s}|^{p_0} +|\widehat X^x_{n,\usn}|^{p_0}\bigr]\le c_2\cdot \bigl(1 + \sup_{0\le u\le s}\EE\bigl[|\widehat X^x_{n,s}|^{p_0}\bigr]\bigr).
\end{equation}
Furthermore,  using the independence of $\widehat X^x_{n,\usn}$ and $W_s-W_{\usn}$,  $s\in[0,1]$, we obtain
for all $s\in[0,1]$ that
\begin{equation}\label{tm6}
\EE[B_s] =  \EE\bigl[|\widehat X^x_{n,\usn}|^{p_0-2} \mu_n\sigma_n(\widehat X^x_{n,\usn})\bigr]\cdot\EE[W_s-W_{\usn}] = 0.
\end{equation}
We finally estimate $\EE[|C_s|]$, $s\in[0,1]$. By Lemma~\ref{tamedcoeff1}(i) there exists $c\in (0,\infty)$ such that, for all $s\in[0,1]$,
\begin{equation}\label{tm7}
\begin{aligned}
|\widehat X^x_{n,s}-\widehat X^x_{n,\usn}| & = |\mu_n(\widehat X^x_{n,\usn})\cdot(s-\usn) + \sigma_n(\widehat X^x_{n,\usn})\cdot(W_s-W_{\usn})|\\
& \le c\cdot(n^{-1/2} + n^{1/4}\cdot|W_s-W_{\usn}|)\cdot(1+|\widehat X^x_{n,\usn}|).
\end{aligned}
\end{equation}
Below we show that there exists $c\in (0,\infty)$ such that,  for all $u,v,w\in\R$,
\begin{equation}\label{tm8}
\begin{aligned}
& \bigl||u+v|^{p_0-2} - |u|^{p_0-2}\bigr|\cdot |\mu_n\sigma_n(u)|\cdot|w|\\
&\qquad\qquad\le c\cdot\bigl(|v|^{p_0}\cdot (1+(n^{3/4}|w|)^{p_0}
) + 1  +|w|^{p_0/2}+  |u|^{p_0}\bigr).
\end{aligned}
\end{equation}
Applying~\eqref{tm8} with $u, v, w$ being realizations of $\widehat X^x_{n,\usn}$, $\widehat X^x_{n,s}-\widehat X^x_{n,\usn}$ and $W_s - W_{\usn}$, respectively, and observing ~\eqref{tm7}
we conclude that there exist $c_1, c_2\in (0,\infty)$ such that, for all $s\in[0,1]$,
\begin{equation}\label{tm9}
\begin{aligned}
|C_s| & \le c_1\cdot|\widehat X^x_{n,s}-\widehat X^x_{n,\usn}|^{p_0}\cdot \bigl(1+(n^{3/4}|W_s-W_{\usn}|)^{p_0}
\bigr) \\
& \qquad\qquad + c_1\cdot\bigl(1 + |W_s-W_{\usn}|^{p_0/2}+|\widehat X^x_{n,\usn}|^{p_0}\bigr)\\
& \le c_2\cdot\bigl(n^{-1/2} + n^{1/4}|W_s-W_{\usn}|\bigr)^{p_0}\bigl(1+|\widehat X^x_{n,\usn}|\bigr)^{p_0} \cdot \bigl(1+(n^{3/4}|W_s-W_{\usn}|)^{p_0}
\bigr)\\
& \qquad + c_2 \cdot \bigl(1 + |W_s-W_{\usn}|^{p_0/2}+|\widehat X^x_{n,\usn}|^{p_0}\bigr).
\end{aligned}
\end{equation}
Using  the independence of $\widehat X^x_{n,\usn}$ and $W_s-W_{\usn}$, $s\in[0,1]$, we therefore obtain that there exists $c\in (0,\infty)$ such that, for all $s\in[0,1]$,
\begin{equation}\label{tm10}
\begin{aligned}
\EE[|C_s|] & \le c\cdot\bigl(1+ \EE\bigl[|\widehat X^x_{n,\usn}|^{p_0}\bigr]\bigr)\le c\cdot \bigl(1 + \sup_{0\le u\le s}\EE\bigl[|\widehat X^x_{n,u}|^{p_0}\bigr]\bigr).
\end{aligned}
\end{equation}

Combining~\eqref{tm3} with~\eqref{tm5},~\eqref{tm6} and~\eqref{tm10} yields that there exists $c\in (0,\infty)$ such that, for all $s\in[0,1]$,
\begin{equation}\label{tm11}
\EE\Bigl[\widehat X^x_{n,s}|\widehat X^x_{n,s}|^{p_0-2}\mu_n(\widehat X^x_{n,\usn}) + \frac{1}{2} \cdot (p_0-1) |\widehat X^x_{n,s}|^{p_0-2}\sigma_n^2(\widehat X^x_{n,\usn})\Bigr]\le c\cdot \bigl(1 + \sup_{0\le u\le s}\EE\bigl[|\widehat X^x_{n,u}|^{p_0}\bigr]\bigr).
\end{equation}
 Combining the latter estimate with~\eqref{tm2} we conclude in particular that there exists $c\in (0,\infty)$ such that, for all
 $t\in[0,1]$,
\begin{equation}\label{tm12}
\sup_{0\le s\le t}\EE\bigl[|\widehat X^x_{n,s}|^{p_0}\bigr] \le |x|^{p_0} + c\cdot \int_0^t \bigl(1 + \sup_{0\le u\le s}\EE\bigl[|\widehat X^x_{n,s}|^{p_0}\bigr]\bigr)\, ds.
\end{equation}
 Observing~\eqref{bbb2} the estimate~\eqref{tmg002a} is now a consequence of the Gronwall lemma.

We proceed by proving inequality~\eqref{tm8}. Clearly,~\eqref{tm8} holds for $p_0=2$. Assume  $p_0>2$. First,  consider the case  $|u|\le 1$. Using  Lemma~\ref{tamedcoeff1}(i),~\eqref{tmg2},~\eqref{tmg3}
and the Young inequality we obtain that there exist $c_1, c_2, c_3\in (0,\infty)$ such that,  for all $u,v,w\in\R$ with  $|u|\le 1$,
\begin{align*}
& \bigl||u+v|^{p_0-2} - |u|^{p_0-2}\bigr| \cdot|\mu_n\sigma_n(u)|\cdot|w|\\
&\qquad\qquad\le c_1\cdot \bigl(|u|^{p_0-2}+|v|^{p_0-2}\bigr)\cdot \bigl(1+|u|^{\ell_\mu+\ell_\sigma+2}\bigr)\cdot|w|\\
&  \qquad\qquad \le c_{2}\cdot\bigl(1+|v|^{p_0-2}\bigr)\cdot |w| \le c_3\cdot \bigl(1+|v|^{p_0}+|w|^{p_0/2}\bigr).
\end{align*}
 Next,  consider the case $|u|> 1$. Assume first that $p_0\in (2,3]$. Put $\alpha=2/(p_0-2)$ and note that $\alpha\ge 2$. For all  $u,v\in\R$ with  $u\not=0$ we have
\begin{align*}
\bigl||u+v|^{p_0-2} - |u|^{p_0-2}\bigr| & \le \frac{\bigl||u+v|^{\alpha(p_0-2)} - |u|^{\alpha(p_0-2)}\bigr|}{|u+v|^{(\alpha-1)(p_0-2)}+|u|^{(\alpha-1)(p_0-2)}}\\
& \le \frac{\bigl||u+v|^{\alpha(p_0-2)} - |u|^{\alpha(p_0-2)}\bigr|}{|u|^{(\alpha-1)(p_0-2)}}\\
&  = \frac{\bigl||u+v|^{2} - |u|^{2}\bigr|}{|u|^{4-p_0}}\le \frac{2|v|(|u| + |v|)}{|u|^{4-p_0}}.
\end{align*}
Using Lemma~\ref{tamedcoeff1}(i) and
Young's inequality we thus conclude that there exist $c_1, c_2, c_3\in (0,\infty)$ such that,  for all $u,v,w\in\R$ with  $|u|> 1$,
\begin{align*}
& \bigl||u+v|^{p_0-2} - |u|^{p_0-2}\bigr|\cdot|\mu_n\sigma_n(u)|\cdot |w|\\
 &\qquad\qquad \le c_1\cdot \frac{|v|(|u| + |v|)}{|u|^{4-p_0}}\cdot (1+u^2)\cdot n^{3/4} |w|\\
& \qquad\qquad \le c_1\cdot \bigl(|v|^{2}n^{3/4}|w|\cdot (1+|u|^{p_0-2}) + |v|n^{3/4}|w|\cdot(1+|u|^{p_0-1})\bigr)\\
&\qquad\qquad \le c_{2}\cdot \bigl(\bigl(|v|^{2}n^{3/4}|w|\bigr)^{p_0/2} + \bigl(|v|n^{3/4}|w|\bigr)^{p_0} + 1 + |u|^{p_0}\bigr)\\
&\qquad\qquad \le c_{3}\cdot \bigl(|v|^{p_0}\cdot (1+(n^{3/4}|w|)^{p_0})  + 1 + |u|^{p_0}\bigr).
\end{align*}
Finally, assume that  $p_0\in [3,\infty)$. In this case we have for all $u, v\in\R$,
\[
\bigl||u+v|^{p_0-2} - |u|^{p_0-2}\bigr| = (p_0-2)\cdot \Bigl|\int_{|u|}^{|u+v|} y^{p_0-3}\, dy \Bigr|\le
(p_0-2)\cdot
\bigl(|u+v|^{p_0-3} + |u|^{p_0-3}\bigr)\cdot |v|.
\]
Using Lemma~\ref{tamedcoeff1}(i)
and Young's inequality we thus obtain that there exist $c_1, c_2, c_3\in (0,\infty)$ such that,  for all $u,v,w\in\R$ with  $|u|> 1$,
\begin{align*}
& \bigl||u+v|^{p_0-2} - |u|^{p_0-2}\bigr|\cdot |\mu_n\sigma_n(u)|\cdot|w|\\
& \qquad\qquad \le c_1\cdot\bigl(|v|^{p_0-3} + |u|^{p_0-3}\bigr)\cdot|v|\cdot(1+u^2)\cdot n^{3/4}|w|\\
& \qquad\qquad \le c_{2}\cdot \bigl(1+|v|^{p_0-1} + |u|^{p_0-1}\bigr)\cdot |v| n^{3/4}|w|\\
& \qquad\qquad \le c_{3}\,\bigl( 1+|v|^{p_0} + |u|^{p_0}+(|v|n^{3/4}|w|)^{p_0}\bigr)\\
& \qquad\qquad = c_{3}\cdot \bigl(|v|^{p_0}\cdot (1+(n^{3/4}|w|)^{p_0}) + 1 + |u|^{p_0}\bigr).
\end{align*}
This finishes the proof of~\eqref{tm8} and completes the proof of~\eqref{tmg002a}.

We turn to the proof of~\eqref{tmg003a}. Using~\eqref{tm2} and~\eqref{tm3} we see that there exists $c\in(0, \infty)$ such that for all stopping times $\tau$ with $\tau \le 1$,
\begin{equation}\label{o1}
 \EE\bigl[|\widehat X^x_{n,\tau}|^{p_0}\bigr]\le |x|^{p_0}+ \EE\Bigl[\int_0^\tau ( c\cdot A_s +B_s + C_s)\, ds\Bigr],
\end{equation}
where $A_s,B_s,C_s$ are given by~\eqref{tm4}.

For all $s\in[0,1]$ we put $\upsn = \lceil s n \rceil /n$.
Applying the integration by parts formula we obtain for all $i\in\{0, \ldots, n-1\}$ and  $t\in[i/n,(i+1)/n]$ that
\begin{align*}
\int_{i/n}^t (\upsn-s)\, dW_s
&=(\uptn-t)\cdot (W_t- W_{i/n})+\int_{i/n}^t (W_s-W_{\usn})\ ds.
\end{align*}
Thus, for all $t\in[0,1]$,
\begin{align*}
\int_0^t B_s\, ds  & = \int_0^t |\widehat X^x_{n,\usn}|^{p_0-2} \mu_n\sigma_n(\widehat X^x_{n,\usn})\cdot (\upsn-s)\, dW_s \\
& \qquad - (\uptn-t)\cdot (W_t-W_{\utn})\cdot |\widehat X^x_{n,\utn}|^{p_0-2} \mu_n\sigma_n(\widehat X^x_{n,\utn}).
\end{align*}
Using  Lemma~\ref{tamedcoeff1}(i) and~\eqref{bbb2} we see that
\[
\EE\Bigl[\sup_{s\in[0,1]}\bigl(|\widehat X^x_{n,\usn}|^{p_0-2} \mu_n\sigma_n(\widehat X^x_{n,\usn})\cdot (\upsn-s)\bigr)^2\Bigr] < \infty,
\]
and therefore the stochastic process
\[
\Bigl(\int_0^t |\widehat X^x_{n,\usn}|^{p_0-2} \mu_n\sigma_n(\widehat X^x_{n,\usn})\cdot (\upsn-s)\, dW_s\Bigr)_{t\in[0,1]}
\]
is a martingale. Thus, for all stopping times $\tau$ with $\tau \le 1$,
\begin{equation}\label{o3}
\EE\Bigl[\int_0^\tau B_s\, ds\Bigr] = -\EE\bigl[ (\overline\tau^n-\tau)\cdot (W_{\tau}-W_{\underline\tau_n})\cdot |\widehat X^x_{n,\underline \tau_n}|^{p_0-2} \mu_n\sigma_n(\widehat X^x_{n,\underline \tau_n})\bigr].
\end{equation}
Combining~\eqref{o1} and~\eqref{o3} we obtain that there exists $c\in(0, \infty)$ such that, for all stopping times $\tau$ with $\tau \le 1$,
\begin{equation}\label{o4}
 \EE\bigl[|\widehat X^x_{n,\tau}|^{p_0}\bigr]\le |x|^{p_0}+ c\cdot \EE\Bigl[\int_0^\tau (A_s + |C_s|)\, ds
 + n^{-1}|W_{\tau}-W_{\underline\tau_n}||\widehat X^x_{n,\underline \tau_n}|^{p_0-2} |\mu_n\sigma_n(\widehat X^x_{n,\underline \tau_n})| \Bigr].
\end{equation}
Employing~\cite[Lemma 3.2]{gk03b} we conclude from~\eqref{o4} that
for all $\gamma\in (0,1)$ there exist $c_1, c_2\in(0, \infty)$  such that
\begin{equation}\label{o5}
\begin{aligned}
\EE\bigl[\sup_{s\in[0,1]}|\widehat X^x_{n,s}|^{\gamma p_0}\bigr]
& \le \frac{2-\gamma}{1-\gamma} \cdot \EE\Bigl[\Bigl(|x|^{p_0} +c_1\cdot \int_0^1( A_s + |C_s|)\, ds \\
 &\qquad\qquad +                       n^{-1} \cdot \sup_{s\in[0,1]} |W_{s}-W_{\usn}||\widehat X^x_{n,\usn}|^{p_0-2}
  |\mu_n\sigma_n(\widehat X^x_{n,\usn})|\Bigr)^\gamma\Bigr] \\
 & \le c_2 \cdot \Bigl(|x|^{\gamma p_0} +\EE\Bigl[\Bigl(\int_0^1 (A_s + |C_s|)\, ds\Bigr)^\gamma
\Bigr] \\
 &\qquad\qquad + n^{-\gamma} \cdot \EE\Bigl[\sup_{s\in[0,1]}\bigl(  |W_{s}-W_{\usn}| |\widehat X^x_{n,\usn}|^{(p_0-2)} |\mu_n\sigma_n(\widehat X^x_{n,\usn})|\bigr)^\gamma\Bigr]\Bigr).
\end{aligned}
\end{equation}
By~\eqref{tm5},~\eqref{tm10} and~\eqref{tmg002a} for all $\gamma\in (0,1)$  there exists $c\in(0, \infty)$  such that
\begin{equation}\label{o6}
\EE\Bigl[\Bigl(\int_0^1 (A_s + |C_s|)\, ds\Bigr)^\gamma \Bigr]\le \Bigl(\int_0^1\EE[A_s + |C_s|]\, ds\Bigr)^{\gamma} \le c\cdot (1+|x|^{\gamma p_0}).
\end{equation} Moreover, using  the Burkholder-Davis-Gundy inequality and Lemma~\ref{tamedcoeff1}(i)
we obtain that for all $\gamma\in (0,1)$  there exist $c_1, c_2, c_3\in(0, \infty)$  such that
\begin{equation}\label{o7}
\begin{aligned}
& \EE\Bigl[\sup_{s\in[0,1]}\bigl(\bigl |W_{s}-W_{\usn}||\widehat X^x_{n,\usn}|^{p_0-2} |\mu_n\sigma_n(\widehat X^x_{n,\usn})|\bigr)^\gamma \Bigr] \\
& \qquad \qquad \le c_1 n^{3\gamma/4}\cdot
\EE\Bigl[\sup_{s\in[0,1]} \bigl(  |W_{s}-W_{\usn}|(1+|\widehat X^x_{n,\usn}|^{p_0} )\bigr)^\gamma \Bigr] \\
& \qquad \qquad =c_1 n^{3\gamma/4}  \cdot
\EE\Bigl[\sup_{s\in[0,1]}  \Bigl|  \int_{\usn}^s(1+|\widehat X^x_{n,\utn}|^{p_0} )\, dW_t\Bigr|^\gamma \Bigr]  \\
& \qquad \qquad \le 2c_1 n^{3\gamma/4}  \cdot
\EE\Bigl[\sup_{s\in[0,1]}  \Bigl|  \int_{0}^s(1+|\widehat X^x_{n,\utn}|^{p_0} )\, dW_t\Bigr|^\gamma \Bigr]  \\
& \qquad \qquad \le c_2n^{3\gamma/4}  \cdot
\EE\Bigl[ \Bigl(\int_{0}^1(1+|\widehat X^x_{n,\usn}|^{p_0} )^2\, ds\Bigr)^{\gamma/2} \Bigr]  \\
& \qquad \qquad \le c_3n^{3\gamma/4} \cdot\Bigl( 1 + \EE\bigl[\sup_{s\in[0,1]}|\widehat X^x_{n,s}|^{\gamma p_0}\bigr]\Bigr).
\end{aligned}
\end{equation}
 Inserting~\eqref{o6} and ~\eqref{o7} into~\eqref{o5} and observing~\eqref{bbb2} we conclude that for all $\gamma\in (0,1)$  there exist $c_1, c_2\in(0, \infty)$  such that
\begin{equation}\label{vvv3}
(1-c_1\cdot  n^{-\gamma/4}) \cdot \EE\bigl[\sup_{s\in[0,1]}|\widehat X^x_{n,s}|^{\gamma p_0}\bigr] 	\le c_2\cdot ( 1 + |x|^{\gamma p_0}).
\end{equation}
Thus, for all $\gamma\in (0,1)$  there exists $n_0\in\N$ and  $c\in(0, \infty)$  such that if $n\geq n_0$ then
\[
\EE\bigl[\sup_{s\in[0,1]}|\widehat X^x_{n,s}|^{\gamma p_0}\bigr] 	\le c\cdot ( 1 + |x|^{\gamma p_0}).
\]
Combining the latter estimate with~\eqref{bbb2} we obtain~\eqref{tmg003a}.

Finally, we turn to the proof of~\eqref{tmg003b} and~\eqref{tmg003}.
 Employing Lemma~\ref{tamedcoeff1}(i) we obtain that there exist $c_1, c_2, c_3\in(0, \infty)$  such that for all $t\in[0,1]$,
\begin{equation}\label{gg10}
	\begin{aligned}
		|\widehat{X}^x_{n,t} -\widehat{X}^x_{n,\utn}| & = |\mu_n(\widehat{X}^x_{n,\utn})\cdot (t-\utn) + \sigma_n(\widehat{X}^x_{n,\utn})\cdot (W_t-W_{\utn})   | \\
		& \le c_1\cdot\bigl(n^{1/2}\,(1+|\widehat{X}^x_{n,\utn}|)\,n^{-1}  + n^{1/4}\,(1+| \widehat{X}^x_{n,\utn}|)\cdot |W_t-W_{\utn}   | \bigr)\\
		& = c_1\cdot (1+|\widehat{X}^x_{n,\utn}|)\cdot ( n^{-1/2} + n^{1/4}|W_t-W_{\utn}   |  )
	\end{aligned}
\end{equation}
as well as
\begin{equation}\label{gg1}
	\begin{aligned}
		|\widehat{X}^x_{n,t} -\widehat{X}^x_{n,\utn}| &  \le c_2\cdot\bigl(\bigl(1+|\widehat{X}^x_{n,\utn}|\bigr)n^{-1/2}  + \bigl(1+| \widehat{X}^x_{n,\utn})  |^{\ell_\sigma +1}\bigr)\cdot |W_t-W_{\utn}   | \bigr)\\
		& \le c_3\cdot  (1+|\widehat{X}^x_{n,\utn})|^{\ell_\sigma +1}) \cdot ( n^{-1/2} + |W_t-W_{\utn}   | ).
	\end{aligned}
\end{equation}
By~\eqref{gg10} and
\eqref{tmg002a}
there exist $c_1, c_2, c_3\in(0, \infty)$  such that, for all $t\in[0,1]$,
\begin{equation}\label{gg20}
	\begin{aligned}
		\EE\bigl[ 	|\widehat{X}^x_{n,t} -\widehat{X}^x_{n,\utn}|^{p_0}\bigr]
		& \le c_1\cdot \EE\bigl[ (1+|\widehat{X}^x_{n,\utn})|^{p_0})\cdot  ( n^{-p_0/2} + n^{p_0/4}|W_t-W_{\utn}   |^{p_0}  )\bigr] \\
		& = c_1\cdot  \bigl(1+\EE\bigl[ |\widehat{X}^x_{n,\utn})|^{p_0}\bigr]\bigr) \cdot\bigl( n^{-p_0/2} +n^{p_0/4} \EE\bigl[ |W_t-W_{\utn}   |^{p_0}  \bigr]\bigr) \\
		& \le c_2\cdot  \bigl(1+\EE\bigl[ |\widehat{X}^x_{n,\utn})|^{p_0}\bigr]\bigr) n^{-p_0/4} \le c_3\cdot (1+|x|^{p_0})n^{-p_0/4},
	\end{aligned}
\end{equation}
and, similarly, by~\eqref{gg1} and
\eqref{tmg002a}
  there exist $c_1, c_2, c_3\in(0, \infty)$  such that, for all $t\in[0,1]$, with $p=p_0/(\ell_\sigma +1)$,
\begin{equation}\label{gg2}
	\begin{aligned}
\EE\bigl[ 	|\widehat{X}^x_{n,t} -\widehat{X}^x_{n,\utn}|^p\bigr]
& \le c_1\cdot  \bigl(1+\EE\bigl[ |\widehat{X}^x_{n,\utn})|^{(\ell_\sigma +1)p}\bigr]\bigr) \cdot\bigl( n^{-p/2} + \EE\bigl[ |W_t-W_{\utn}   |^p  \bigr]\bigr) \\
& \le c_2\cdot \bigl(1+\EE\bigl[ |\widehat{X}^x_{n,\utn})|^{p_0}\bigr]\bigr) n^{-p/2} \le c_3\cdot (1+|x|^{p_0})n^{-p/2}.
	\end{aligned}
\end{equation}
 This completes the proof of~\eqref{tmg003b} and~\eqref{tmg003} and finishes the proof of the lemma.

\end{proof}

\begin{rem}\label{gap}
	We add that our proof of~\eqref{tmg002a} closes a gap in the proof of Lemma 2 in~\cite{Sabanis2016} for the range $p_0\in(2,4)$.
\end{rem}

\subsection{A Markov property and occupation time estimates for the time-continuous tamed Euler scheme}\label{4.2}

The following lemma provides a Markov property of the time-continuous tamed Euler
scheme  $\eul^x_n$ relative to the gridpoints $1/n,2/n,\ldots,1$.

\begin{lemma}\label{markov}
For all $x\in\R$, all $n\in\N$, all $j\in\{0, \ldots, n-1\}$ and $\PP ^{\eul_{n,j/n}^x} $-almost all $y\in\R$ we have
\[
\PP ^{(\eul_{n,t}^x)_{t\in [j/n, 1]}|\mathcal F_{j/n}}=\PP ^{(\eul_{n,t}^x)_{t\in [j/n, 1]}|\eul_{n,j/n}^x}
\]
as well as
\[
\PP ^{(\eul_{n,t}^x)_{t\in [j/n, 1]}|\eul_{n,j/n}^x=y}=\PP ^{(\eul^y_{n,t})_{t\in [0,1-j/n]}}.
\]
\end{lemma}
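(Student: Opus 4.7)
The plan is to exhibit $(\eul_{n,t}^x)_{t\in[j/n,1]}$ explicitly as a measurable function of the two ingredients that go into its construction after time $j/n$, namely $\eul_{n,j/n}^x$ and the post-$j/n$ Brownian path, and then apply a standard factorization lemma for conditional distributions. Specifically, by iterating the defining recursion~\eqref{te} over the subintervals $[i/n,(i+1)/n]$ for $i\in\{j,j+1,\ldots,n-1\}$, I would construct by induction on $i$ a measurable map
\[
F\colon\R\times C([j/n,1],\R)\to C([j/n,1],\R)
\]
such that, almost surely,
\[
(\eul_{n,t}^x)_{t\in[j/n,1]}=F\bigl(\eul_{n,j/n}^x,\,(W_t-W_{j/n})_{t\in[j/n,1]}\bigr).
\]
Each inductive step is routine since the Euler map $(y,w)\mapsto y+\mu_n(y)(t-i/n)+\sigma_n(y)(w(t)-w(i/n))$ is jointly continuous.

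For the first identity, note that $\eul_{n,j/n}^x$ is $\mathcal F_{j/n}$-measurable (a deterministic function of $x$ and $W_{1/n},\ldots,W_{j/n}$), while $\widetilde W=(W_t-W_{j/n})_{t\in[j/n,1]}$ is independent of $\mathcal F_{j/n}$ because $(\mathcal F_t)_{t\ge 0}$ is a normal filtration for $W$. The standard factorization lemma for conditional distributions (applicable since the path space is Polish) yields, for $\PP$-a.e.\ $\omega$,
\[
\PP^{F(\eul_{n,j/n}^x,\widetilde W)\mid\mathcal F_{j/n}}(\omega,\cdot)=\PP^{F(y,\widetilde W)}\bigr|_{y=\eul_{n,j/n}^x(\omega)}.
\]
The right-hand side depends on $\omega$ only through $\eul_{n,j/n}^x(\omega)$, which is exactly the statement that the conditional law given $\mathcal F_{j/n}$ coincides with the conditional law given $\eul_{n,j/n}^x$, and simultaneously identifies the regular conditional distribution of $(\eul_{n,t}^x)_{t\in[j/n,1]}$ given $\eul_{n,j/n}^x=y$ as the law of $F(y,\widetilde W)$ for $\PP^{\eul_{n,j/n}^x}$-a.e.\ $y$.

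For the second identity, it remains to check that $F(y,\widetilde W)$, viewed as a process on $[j/n,1]$, has the same law as $(\eul_{n,t}^y)_{t\in[0,1-j/n]}$, viewed as a process on $[0,1-j/n]$. This follows from the time-shift invariance of the construction: the shifted process $(\widetilde W_{s+j/n})_{s\in[0,1-j/n]}=(W_{s+j/n}-W_{j/n})_{s\in[0,1-j/n]}$ is a Brownian motion on $[0,1-j/n]$ with the same law as $(W_s)_{s\in[0,1-j/n]}$, and inserting it in place of the driving increments in the Euler recursion for starting value $y$ reproduces the definition of $\eul_n^y$ up to time $1-j/n$. A second, parallel induction on the grid points makes this rigorous.

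The argument is essentially bookkeeping; no sharp estimates are needed. The only mildly delicate point is the measurable construction of $F$ into path space and the correct invocation of the factorization lemma, both of which are standard. I do not anticipate a substantive obstacle.
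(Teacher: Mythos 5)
Your proposal is correct and follows essentially the same route as the paper: the paper's proof consists precisely of exhibiting a measurable map $\psi\colon\R\times C([0,\ell/n])\to C([0,\ell/n])$, independent of the starting grid index, with $(\eul^x_{n,t+i/n})_{t\in[0,\ell/n]}=\psi(\eul^x_{n,i/n},(W_{t+i/n}-W_{i/n})_{t\in[0,\ell/n]})$, and then invoking the independence of the shifted Brownian increments from $\F_{i/n}$ together with the standard factorization lemma. Your construction of $F$ and the subsequent substitution argument are exactly this, with the time-shift invariance you verify separately being encoded in the paper by the fact that $\psi$ does not depend on $i$.
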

\begin{proof}
The lemma is an immediate consequence of the fact that, by definition of $\eul_n^x$, for every $\ell\in \{1,\ldots,n\}$  there exists a mapping $\psi\colon \R\times C([0,\ell/n]) \to C([0,\ell/n]) $ such that for all $x\in\R$ and all $i\in\{0,1,\ldots,n-\ell\}$,
\[
(\eul^x_{n,t+i/n})_{t\in[0, \ell/n]} = \psi\bigl(\eul^x_{n, i/n},(W_{t+i/n}-W_{i/n})_{t\in[0, \ell/n]}\bigr).\qedhere
\]
\end{proof}

Next, we provide an estimate for the expected occupation time  of a neighborhood of   a non-zero of $\sigma$ by the time-continuous tamed Euler scheme $\eul_{n}^x$.

\begin{lemma}\label{occup} Assume (A1) to (A4) and $p_0\ge \ell_\mu+\ell_\sigma+2$.
Let $\xi\in\R$ satisfy $\sigma(\xi)\not=0$. Then, there exists $ c\in (0, \infty)$ such that, for   all $x\in\R$, all $n\in\N$ and all $\eps\in (0,\infty)$,
\begin{equation}
\int_0^1  \PP(\{|\eul_{n,t}^x-\xi|\leq \varepsilon\})\,dt\leq c\cdot
(1+|x|^{\ell_\mu +\ell_\sigma +2})\cdot\Bigl(\varepsilon+\frac{1}{\sqrt n}\Bigr).
\end{equation}
\end{lemma}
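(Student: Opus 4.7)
The plan is to combine an occupation-time / Tanaka's-formula argument with a Markov-inequality estimate for excursions of the scheme away from $\xi$. Since $\sigma$ is continuous and $\sigma(\xi)\neq 0$, I would first choose $\delta_0\in (0,1]$ small enough that $|\sigma(y)|\geq c_1$ for some $c_1>0$ on $\{|y-\xi|\leq\delta_0\}$; together with \eqref{tamedcoeff} and the fact that $\delta_0$ is bounded, this yields a uniform lower bound $|\sigma_n(y)|\geq c_2>0$ on the same interval, valid for every $n\in\N$. The case $\varepsilon\geq\delta_0/2$ is then handled by the trivial bound $\PP(\cdot)\leq 1$, so the proof reduces to $\varepsilon<\delta_0/2$.

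Next I would split the integrand by whether $|\eul_{n,\utn}^x-\xi|$ is above or below $\delta_0$. In the \emph{far} case $|\eul_{n,\utn}^x-\xi|>\delta_0$, the bound $\varepsilon<\delta_0/2$ forces $|\eul_{n,t}^x-\eul_{n,\utn}^x|>\delta_0/2$, so Markov's inequality together with \eqref{tmg003} (combined with Jensen to produce an $L_1$-estimate from the $L_{p_0/(\ell_\sigma+1)}$-bound, noting $p_0/(\ell_\sigma+1)\geq 1$) gives a probability at most $c(1+|x|^{\ell_\sigma+1})/(\delta_0\sqrt{n})$, and thus a contribution of order $(1+|x|^{\ell_\sigma+1})/\sqrt{n}$ after integrating in $t$.

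In the \emph{near} case $|\eul_{n,\utn}^x-\xi|\leq\delta_0$, I would replace the indicator by the factor $c_2^{-2}\sigma_n^2(\eul_{n,\utn}^x)$ and then exploit that $\eul_n^x$ is a continuous semimartingale with $d\langle\eul_n^x\rangle_t=\sigma_n^2(\eul_{n,\utn}^x)\,dt$. The occupation-time formula turns the resulting integral into $\int_{\xi-\varepsilon}^{\xi+\varepsilon}L_1^y(\eul_n^x)\,dy$, where $L^y$ is the local time of $\eul_n^x$ at level $y$. Applying Tanaka's formula to $|\eul_n^x-y|$, writing $d\eul_n^x=\mu_n(\eul_{n,\utn}^x)\,dt+\sigma_n(\eul_{n,\utn}^x)\,dW_t$, and taking expectations yields
\[
\EE[L_1^y]=\EE[|\eul_{n,1}^x-y|]-|x-y|-\EE\Bigl[\int_0^1 \sgn(\eul_{n,s}^x-y)\mu_n(\eul_{n,\usn}^x)\,ds\Bigr],
\]
the stochastic-integral term being a true martingale by the Burkholder--Davis--Gundy inequality and the bound $|\sigma_n|^2\leq c(1+|\cdot|^{2\ell_\sigma+2})$, combined with the $L_{p_0}$-estimate \eqref{tmg002a}. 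Using $|\mu_n|\leq|\mu|\leq c(1+|\cdot|^{\ell_\mu+1})$ and again \eqref{tmg002a}, the hypothesis $p_0\geq\ell_\mu+\ell_\sigma+2$ ensures all relevant moments are finite and produces a uniform bound $\EE[L_1^y]\leq c(1+|x|^{\ell_\mu+1})$ for $y$ in a bounded neighborhood of $\xi$. Integrating this in $y$ over $[\xi-\varepsilon,\xi+\varepsilon]$ then delivers a contribution of order $\varepsilon(1+|x|^{\ell_\mu+1})$ from the near case.

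Summing the near and far contributions and using $\ell_\mu+1,\ell_\sigma+1\leq\ell_\mu+\ell_\sigma+2$ to unify the polynomial factors yields the claim. The step I expect to require the most care is the rigorous justification of Tanaka's formula and the occupation-time formula for this piecewise-coefficient semimartingale --- specifically, showing that the stochastic integral appearing in Tanaka's identity is a genuine martingale rather than merely a local one. This is precisely where the moment assumption $p_0\geq\ell_\mu+\ell_\sigma+2$ is used, as it guarantees the integrability of both $|\mu_n(\eul_{n,\usn}^x)|$ (exponent $\ell_\mu+1$) and $\sigma_n^2(\eul_{n,\usn}^x)$ (exponent $2\ell_\sigma+2$, controlled since $\ell_\sigma\leq\ell_\mu/2$).
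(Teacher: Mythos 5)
Your proof is correct, and the core of it (Tanaka's formula for the local time of $\eul_n^x$, the occupation-time formula, and the uniform-in-$n$ lower bound on $\sigma_n$ near $\xi$ coming from (A4)) coincides with the paper's argument; the moment bookkeeping via \eqref{tmg002a}, \eqref{tmg2}, \eqref{tmg3} and the martingale justification are also the same. Where you genuinely diverge is in how you reconcile the indicator $\ind_{[\xi-\eps,\xi+\eps]}(\eul_{n,t}^x)$ with the quadratic-variation density $\sigma_n^2(\eul_{n,\utn}^x)$, which is evaluated at the grid point rather than at $t$. The paper inserts $\sigma_n^2(\eul^x_{n,t})$ (so that the lower bound applies where the indicator lives) and then pays for the replacement by $\sigma_n^2(\eul^x_{n,\utn})$ through the estimate $\EE\bigl[\int_0^1|\sigma_n^2(\eul^x_{n,t})-\sigma_n^2(\eul^x_{n,\utn})|\,dt\bigr]\le c\,n^{-1/2}(1+|x|^{3\ell_\sigma+2})$, which requires the local-Lipschitz bound \eqref{tmg0005} for $\sigma_n$ and a H\"older argument with \eqref{tmg003}; this is the source of its $n^{-1/2}$ term. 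You instead condition on the location of $\eul^x_{n,\utn}$: in the near case the lower bound applies directly to $\sigma_n^2(\eul^x_{n,\utn})$ and the occupation-time formula closes the argument with no replacement error, while the far case is dispatched by Markov's inequality and the one-step increment bound \eqref{tmg003}, producing your $n^{-1/2}$ term. Both routes are valid under $p_0\ge\ell_\mu+\ell_\sigma+2$; yours avoids the second-moment difference estimate for $\sigma_n^2$ at the cost of an extra case distinction, and in fact yields the slightly smaller polynomial factor $1+|x|^{\ell_\mu+1}$ in place of the paper's $1+|x|^{\ell_\mu+1}+|x|^{3\ell_\sigma+2}$ (both of which are dominated by $1+|x|^{\ell_\mu+\ell_\sigma+2}$, so the stated lemma is unaffected).
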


\begin{proof}
Let  $x\in\R$ and $n\in\N$.
By~\eqref{intrep},
\eqref{tmg004}
and Lemma~\ref{eulprop},  $\eul_n^x$ is a continuous semi-martingale with quadratic variation
\begin{equation}\label{qv}
\langle \eul_{n}^x\rangle_t
      =\int_0^t \sigma_n^2\bigl(\eul_{n,\usn}^x\bigr)\, ds,\quad t\in[0,1].
\end{equation}
For $a\in\R$ let $L^a(\eul_n^x) = (L^a_t(\eul_n^x))_{t\in[0,1]}$ denote the local time
of $\eul_{n}^x$ at the point $a$.
Thus,
for all $a\in\R$ and
 all $t\in[0,1]$,
\begin{align*}
|\eul_{n,t}^x-a| & = |x-a| + \int_0^t \sgn(\eul_{n,s}^x-a)\, \mu_n (\eul_{n,\usn}^x)\, ds\\
& \qquad\qquad  + \int_0^t \sgn(\eul_{n,s}^x-a)\, \sigma_n (\eul_{n,\usn}^x)\, dW_s + L^a_t(\eul_n^x),
\end{align*}
where $\sgn(z) = 1_{(0,\infty)}(z) - 1_{(-\infty,0]}(z)$ for $z\in\R$,
see, e.g.~\cite[Chap. VI]{RevuzYor2005}.
Hence,
for all $a\in\R$ and
 all $t\in[0,1]$,
 \begin{equation}\label{tmgx1}
\begin{aligned}
L^a_t(\eul_n^x) & \le |\eul^x_{n,t}-x| + \int_0^t |\mu_n (\eul_{n,\usn}^x)|\, ds + \Bigl|\int_0^t \sgn(\eul_{n,s}^x-a)\, \sigma_n (\eul_{n,\usn}^x)\, dW_s\Bigr|\\
& \le  \int_0^t 2 |\mu_n (\eul_{n,\usn}^x)|\, ds + \Bigl|\int_0^t\sigma_n (\eul_{n,\usn}^x)\, dW_s\Bigr|+ \Bigl|\int_0^t \sgn(\eul_{n,s}^x-a)\, \sigma_n (\eul_{n,\usn}^x)\, dW_s\Bigr|.
\end{aligned}
\end{equation}

Using the Burkholder-Davis-Gundy inequality, \eqref{tmg004}, \eqref{tmg2}, \eqref{tmg3}  and estimate~\eqref{tmg002a} in Lemma~\ref{eulprop} together with the assumption $\ell_\sigma \le \ell_\mu/2$ and the
fact that  $\max(\ell_\mu+1,\ell_\sigma+1) \le p_0$  we conclude that there exist $c_1,c_2, c_3,c_4\in (0,\infty)$ such that,
  for all $x\in\R$, all $n\in\N$,
all $a\in\R$
   and all $t\in[0,1]$,
\begin{equation}\label{local1}
\begin{aligned}
\EE\bigl[L^a_t(\eul_n^x)\bigr] &  \le  2\int_0^t \EE\bigl[|\mu_n (\eul_{n,\usn}^x)|\bigr]\, ds + c_1\cdot \EE\Bigl[\Bigl(\int_0^t \sigma^2_n (\eul_{n,\usn}^x)\, ds\Bigr)^{1/2}\Bigr]\\
& \le c_2 \cdot\Bigl(1+ \sup_{s\in[0,1]}\EE\bigl[|\eul_{n,s}^x|^{\ell_\mu+1}\bigr]
 + \sup_{s\in[0,1]}\EE\bigl[|\eul_{n,s}^x|^{\ell_\sigma+1}]\Bigr)\\
&\le c_3 \cdot\bigl(1+ |x|^{\ell_\mu+1}+|x|^{\ell_\sigma+1}\bigr) \le c_4 \cdot \bigl(1+|x|^{\ell_\mu+1}\bigr).
\end{aligned}
\end{equation}
Using~\eqref{qv} and~\eqref{local1} we obtain by the occupation
time formula that there exists $c\in (0,\infty)$ such that,
for all $x\in\R$, all $n\in\N$ and all $\eps\in (0,\infty)$,
\begin{equation}\label{local2}
\begin{aligned}
&  \EE\Bigl[\int_0^1 1_{[\xi-\eps,\xi+\eps]}(\eul^x_{n,t})\, \sigma_n^2(\eul_{n,\utn}^x)\, dt\Bigr]
\\
&\qquad\qquad = \int_{\R}1_{[\xi-\eps,\xi+\eps]}(a)\, \EE\bigl[L^a_t(\eul_n^x)\bigr]\, da
 \le c\cdot \bigl(1+ |x|^{\ell_\mu+1}\bigr)\cdot \eps.
 \end{aligned}
\end{equation}

By estimates \eqref{tmg004} and \eqref{tmg0005} in Lemma~\ref{tamedcoeff1} and~\eqref{tmg2} we see that there exists $c\in (0,\infty)$ such that, for all $n\in\N$ and all $y,z\in\R$,
\begin{equation}\label{tmg556}
\begin{aligned}
& |\sigma_n^2(z)-\sigma_n^2(y)|\\
& \qquad \le (|\sigma_n(z)|+|\sigma_n(y)|)\cdot |\sigma_n(z)-\sigma_n(y)|\le (|\sigma(z)|+|\sigma(y)|)\cdot |\sigma_n(z)-\sigma_n(y)|\\
& \qquad \le c\cdot \bigl((1 + |z|^{2\ell_\sigma+1} + |y|^{2\ell_\sigma+1}) \cdot \bigl(|z-y|+n^{-1/2}\bigr).
\end{aligned}
\end{equation}
Note  that $p_0\ge  \ell_\mu+\ell_\sigma+2$ implies $\tfrac{(2\ell_\sigma+1)p_0}{p_0-\ell_\sigma-1}\leq p_0$. Hence,
by~\eqref{tmg556} and estimates \eqref{tmg002a}  and \eqref{tmg003} in Lemma~\ref{eulprop}  we conclude that there exist $c_1, c_2, c_3\in(0, \infty)$ such that, for all $n\in\N$, all $x\in\R$ and all $t\in[0,1]$,
\begin{align*}
& \EE\bigl[|\sigma_n^2(\eul_{n,t}^x)-\sigma_n^2(\eul_{n,\utn}^x)|\bigr]\\
&\qquad\qquad  \le c_1\cdot \EE\bigl[(1 + |\eul_{n,t}^x|^{2\ell_\sigma+1} + |\eul_{n,\utn}^x|^{2\ell_\sigma+1}) \cdot\bigl(|\eul_{n,t}^x-\eul_{n,\utn}^x|+n^{-1/2}\bigr)\bigr]\\
& \qquad\qquad \le c_2\cdot \bigl(1 + \sup_{s\in[0,1]}\EE\bigl[|\eul_{n,s}^x|^{\frac{(2\ell_\sigma+1)p_0}{p_0-\ell_\sigma-1}}\bigr]^{\frac{p_0-\ell_\sigma-1}{p_0}}\bigr) \cdot \EE\bigl[ |\eul_{n,t}^x-\eul_{n,\utn}^x|^{\frac{p_0}{\ell_\sigma+1}}\bigr]^{\frac{\ell_\sigma+1}{p_0}}\\
& \qquad\qquad \qquad + c_2 \,n^{-1/2}\cdot \bigl(1 + \sup_{s\in[0,1]}\EE\bigl[|\eul_{n,s}^x|^{2\ell_\sigma+1}\bigr]\bigr)\\
&\qquad\qquad  \le c_3\,n^{-1/2}\cdot (1+|x|^{3\ell_\sigma+2}).
\end{align*}
Thus, there exists $c\in (0,\infty)$ such that, for all $n\in\N$ and all $x\in\R$,
\begin{equation}\label{local3}
\begin{aligned}
  \EE\Bigl[\int_0^1\bigl|\sigma_n^2(\eul^x_{n,t})-\sigma_n^2(\eul^x_{n,\utn})\bigr|\, dt \Bigr]
 & \le c\,n^{-1/2}\cdot (1+|x|^{3\ell_\sigma+2}).
\end{aligned}
\end{equation}

Since $\sigma$ is continuous and $\sigma(\xi)\neq 0$ there exist  $\kappa,\eps_0\in(0,\infty)$ such that
\[
\inf_{|z-\xi|< \eps_0}|\sigma(z)| \ge \kappa.
\]
Hence, for all $n\in\N$ and all $z\in (\xi-\eps_0,\xi+\eps_0)$,
\begin{equation}\label{tmgcc3}
|\sigma_n(z)| \ge \frac{|\sigma(z)|}{1 + |z|^{\ell_\mu}} \ge \frac{\kappa}{1 + (\eps_0 + |\xi|)^{\ell_\mu}}.
\end{equation}
Put $\tilde \kappa = \frac{\kappa}{1 + (\eps_0 + |\xi|)^{\ell_\mu}}$. Employing~\eqref{local2}, ~\eqref{local3} and~\eqref{tmgcc3} we conclude that there exists $c\in (0,\infty)$ such that, for all $x\in\R$, all $n\in\N$ and all $\eps \in (0,\eps_0]$,
\begin{align*}
 \int_0^1  \PP(\{|\eul_{n,t}^x-\xi|\leq \varepsilon\})\,dt  &  = \frac{1}{\tilde \kappa^2}\cdot \EE\Bigl[\int_0^1\tilde \kappa^2\, 1_{[\xi-\eps,\xi+\eps]}(\eul^x_{n,t})\, dt\Bigr] \\ &  \le \frac{1}{\tilde \kappa^2}\cdot \EE\Bigl[\int_0^1 1_{[\xi-\eps,\xi+\eps]}(\eul^x_{n,t})\,  \sigma_n^2(\eul^x_{n,t})\, dt\Bigr] \\
& \le  \frac{1}{\tilde \kappa^2}\cdot \EE\Bigl[\int_0^1\bigl( 1_{[\xi-\eps,\xi+\eps]}(\eul^x_{n,t})\,  \sigma_n^2(\eul^x_{n,\utn}) + \bigl|\sigma_n^2(\eul^x_{n,t})-\sigma_n^2(\eul^x_{n,\utn})\bigr|\bigr)\, dt\Bigr]\\
&  \le  \frac{c}{\tilde \kappa^2}\cdot (1+|x|^{\ell_\mu+1}+|x|^{3\ell_\sigma+2})\cdot \Bigl( \eps + \frac{1}{\sqrt n}\Bigr),
\end{align*}
which completes the proof of the lemma.
\end{proof}

The following lemma shows how to  transfer  the condition of a sign change of $\eul_n -\xi$ at time $t$ relative to its sign at time $\utn$ to a condition on the distance of $\eul_n$ and $\xi$ at  times $\utn-1/n, \utn-(t-\utn)$ and $\utn$.

\begin{lemma}\label{central} Assume (A1) to (A4) and
let $\xi\in\R$. Then, for all $\gamma \in (0,1/2)$ there exists $c\in(0, \infty)$ such that, for all $n\in\N$, all $0\le s\le t \le 1$ with $\utn-s \ge 1/n$ and all $A\in \F_s$,
\begin{equation}\label{central1}
\begin{aligned}
& \PP\bigl(A \cap \{(\widehat X_{n,t}-\xi)\cdot( \widehat X_{n,\utn}-\xi)\le 0\}\bigr)\\
& \qquad\qquad \le \frac{c}{n}\, \PP(A) +c\,\PP\bigl(A\cap\{\max(|\widehat X_{\utn}-\xi|,|\widehat X_{\utn-1/n}-\xi|)\geq  n^{(1/2-\gamma)/\ell_\sigma}\}\bigr) \\
& \qquad\qquad\qquad + c\, \int_{\R} \PP\bigl(A \cap \bigl\{|\widehat X_{n, \utn-(t-\utn)}-\xi| \le c\,n^{-1/2} \cdot(1+|z|)\bigr\}\bigr)\cdot e^{-\frac{z^2}{2}}\, dz.
\end{aligned}
\end{equation}
\end{lemma}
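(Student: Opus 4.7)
The plan is to translate the sign-change condition on $\widehat X_{n,t}$ and $\widehat X_{n,\utn}$ into a proximity condition on $\widehat X_{n,\utn-\tau}$, where $\tau=t-\utn\in[0,1/n]$, localize away from a bad set where $\widehat X_{n,\utn-1/n}$ or $\widehat X_{n,\utn}$ is far from $\xi$, and reduce the Gaussian randomness of the increments $\Delta_1=W_\utn-W_{\utn-\tau}$, $\Delta_2=W_t-W_\utn$ to the integral $\int_\R\PP(\cdots)e^{-z^2/2}\,dz$ in the target bound. The assumption $\utn-s\ge 1/n$ ensures $\utn-\tau\ge s$, so $A\in\F_{\utn-\tau}$; this $\F_{\utn-\tau}$-measurability will be essential at the integration step.

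First, I apply the Euler dynamics \eqref{te} on $[\utn-1/n,\utn-\tau]$, $[\utn-\tau,\utn]$ and $[\utn,t]$ to obtain $\widehat X_{n,\utn}-\widehat X_{n,\utn-\tau}=\mu_n(\widehat X_{n,\utn-1/n})\tau+\sigma_n(\widehat X_{n,\utn-1/n})\Delta_1$ and $\widehat X_{n,t}-\widehat X_{n,\utn}=\mu_n(\widehat X_{n,\utn})\tau+\sigma_n(\widehat X_{n,\utn})\Delta_2$, where $(\Delta_1,\Delta_2)$ is, conditionally on $\F_{\utn-\tau}$, a pair of independent $\N(0,\tau)$ random variables. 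The sign-change condition $(\widehat X_{n,t}-\xi)(\widehat X_{n,\utn}-\xi)\le 0$ forces $|\widehat X_{n,\utn}-\xi|\le|\widehat X_{n,t}-\widehat X_{n,\utn}|$, and substituting $\widehat X_{n,\utn-\tau}-\xi=(\widehat X_{n,\utn}-\xi)-(\widehat X_{n,\utn}-\widehat X_{n,\utn-\tau})$ via the triangle inequality yields
\begin{equation*}
|\widehat X_{n,\utn-\tau}-\xi|\le|\sigma_n(\widehat X_{n,\utn-1/n})\Delta_1|+|\sigma_n(\widehat X_{n,\utn})\Delta_2|+\tau\bigl(|\mu_n(\widehat X_{n,\utn-1/n})|+|\mu_n(\widehat X_{n,\utn})|\bigr).
\end{equation*}

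Next, I split the sign-change probability using the bad set $B=\{\max(|\widehat X_{n,\utn}-\xi|,|\widehat X_{n,\utn-1/n}-\xi|)\ge n^{(1/2-\gamma)/\ell_\sigma}\}$, whose contribution directly furnishes the second summand via $\PP(A\cap B)$. On $B^c$, I combine Lemma~\ref{tamedcoeff1}(i)--(iv) with $\ell_\sigma\le\ell_\mu/2$ and the taming denominator $1+n^{-1/2}|x|^{\ell_\mu}$ to bound $|\sigma_n(\widehat X_{n,\utn-1/n})|$, the Lipschitz correction $|\sigma_n(\widehat X_{n,\utn})-\sigma_n(\widehat X_{n,\utn-1/n})|$ and the drift contributions $\tau|\mu_n(\cdot)|$. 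After additionally discarding an atypical Brownian-tail event $\{|W_\utn-W_{\utn-1/n}|\ge K_\gamma\sqrt{(\log n)/n}\}$, whose probability is $O(1/n)$ by a standard Gaussian tail bound (contributing the first summand $\frac{c}{n}\PP(A)$), I expect the previous display to upgrade on the remaining event to
\begin{equation*}
|\widehat X_{n,\utn-\tau}-\xi|\le c_\gamma\,n^{-1/2}\bigl(1+|Z_1|+|Z_2|\bigr),
\end{equation*}
where $Z_i=\Delta_i/\sqrt\tau$ are, conditionally on $\F_{\utn-\tau}$, independent standard normal, and $c_\gamma$ is $n$-independent.

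Finally, since $A$ and $\widehat X_{n,\utn-\tau}$ are both $\F_{\utn-\tau}$-measurable while $(Z_1,Z_2)$ is conditionally independent of $\F_{\utn-\tau}$ with the standard bivariate normal law, conditioning on $\F_{\utn-\tau}$ reduces the probability on the left of the above display to an integral against the density $f_R$ of $R=|Z_1|+|Z_2|$. The convolution identity $f_R(r)=4\int_0^r\phi(u)\phi(r-u)\,du$ together with $u^2+(r-u)^2=2(u-r/2)^2+r^2/2$ yields the pointwise bound $f_R(r)\le (2/\sqrt{\pi})\,e^{-r^2/4}$ for $r\ge 0$; the substitution $r=\sqrt 2\,|z|$ and symmetric extension in $z$ then recast the integral in the form $c\int_\R\PP(A\cap\{|\widehat X_{n,\utn-\tau}-\xi|\le c'\,n^{-1/2}(1+|z|)\})e^{-z^2/2}\,dz$, which is the third summand. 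The hardest part will be the quantitative bookkeeping in the second paragraph: I must verify that the threshold $n^{(1/2-\gamma)/\ell_\sigma}$ is precisely the value for which the polynomial-in-$n$ prefactors coming from $|\sigma_n|$, $|\mu_n|$ and their local Lipschitz constants (together with the contribution of the discarded Brownian tail) combine---via the constraint $\ell_\sigma\le\ell_\mu/2$ and the $n^{-1/2}$-taming---into a single $\gamma$-dependent but $n$-independent constant in front of $n^{-1/2}(1+|Z_1|+|Z_2|)$.
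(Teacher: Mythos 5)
Your overall architecture matches the paper's proof: reduce the sign change to $|\widehat X_{n,\utn}-\xi|\le|\widehat X_{n,t}-\widehat X_{n,\utn}|$, pass to $|\widehat X_{n,\utn-\tau}-\xi|$ by the triangle inequality, split off the bad set $B$ and a Brownian tail event, and integrate out the normalized increments against a Gaussian density (your convolution bound for $|Z_1|+|Z_2|$ is a legitimate variant of the paper's two-dimensional substitution). However, the step you yourself flag as "the hardest part" contains a genuine gap, and the mechanism you propose for it provably fails. On $B^c$ you only know $|\widehat X_{n,\utn}-\xi|<M$ and $|\widehat X_{n,\utn-1/n}-\xi|<M$ with $M=n^{(1/2-\gamma)/\ell_\sigma}$, so the best you can extract from $|\sigma_n(x)|\le K(1+|x|^{\ell_\sigma+1})$ is $|\sigma_n(\widehat X_{n,\utn})\Delta_2|\lesssim n^{(1/2-\gamma)(\ell_\sigma+1)/\ell_\sigma}\cdot n^{-1/2}\sqrt{\ln n}=n^{(1/2-\gamma)/\ell_\sigma-\gamma}\sqrt{\ln n}\cdot n^{-1/2}\cdot n^{1/2}$, i.e.\ a prefactor of order $n^{(1/2-\gamma)/\ell_\sigma-\gamma}\sqrt{\ln n}$ in front of $n^{-1/2}$, which diverges; the drift term $\tau|\mu_n(\cdot)|\lesssim n^{-1/2}(1+M)$ diverges as well. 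The threshold $n^{(1/2-\gamma)/\ell_\sigma}$ is \emph{not} calibrated to make these prefactors $n$-independent, so your expected upgrade to $c_\gamma n^{-1/2}(1+|Z_1|+|Z_2|)$ does not follow from the bookkeeping you describe.

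The missing idea is a bootstrap (self-improvement) argument. The paper first writes the sign-change consequence in self-referential form,
\begin{equation*}
|\widehat X_{n,\utn}-\xi|\le |\widehat X_{n,t}-\widehat X_{n,\utn}|\le c\,n^{-1/2}\cdot\bigl(1+|\widehat X_{n,\utn}-\xi|^{\ell_\sigma+1}\bigr)\cdot(1+|Z_1|),
\end{equation*}
and then argues by contradiction: if $|\widehat X_{n,\utn}-\xi|>1$, then on the event $\{|Z_1|\le\sqrt{2\ln n}\}$ and for $n$ large this inequality forces $|\widehat X_{n,\utn}-\xi|^{\ell_\sigma}\ge n^{1/2-\gamma}$, contradicting membership in $B^c$ --- and this contradiction is exactly what the exponent $(1/2-\gamma)/\ell_\sigma$ is designed for. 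Hence $|\widehat X_{n,\utn}-\xi|\le 1$, after which the same inequality yields the genuine $O(n^{-1/2}(1+|Z_1|))$ bound with a constant depending only on $\xi$ and $\ell_\sigma$. A second, analogous bootstrap (transferring the bound from $\widehat X_{n,\utn-1/n}$ to $\widehat X_{n,\utn-\tau}$ and closing a self-referential inequality in $|\widehat X_{n,\utn-\tau}-\xi|$) is needed to reach the point $\utn-\tau$ at which you finally integrate. Without these two contraction steps, the proof does not close; with them, the rest of your plan (measurability with respect to $\F_{\utn-\tau}$, conditional independence of $(Z_1,Z_2)$, and the Gaussian integration) goes through as you describe. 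Also note that the case $\ell_\sigma=0$ needs separate (easier) treatment, since the threshold $n^{(1/2-\gamma)/\ell_\sigma}$ degenerates there.
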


\begin{proof}
Due to \eqref{tmg2} and~\eqref{tmg004} there exists $K\in (0,\infty)$ such that, for all $n\in\N$ and all $x\in\R$,
\begin{equation}\label{tmgX1}
|\mu_n(x)|\le K \sqrt{n} \cdot(1+|x|)\quad\text{ and }\quad |\sigma_n(x)| \le K\cdot (1+|x|^{\ell_\sigma+1}).
\end{equation}
Put
\[
\kappa=2^{3\ell_\sigma+8}K\cdot  (1+|\xi|^{\ell_\sigma+1}).
\]
Let $\gamma \in (0,1/2)$ and
choose $n_0\in\N\setminus\{1\}$
such that, for all $n \geq n_0$,
\[
\kappa\,n^{-\gamma}\cdot  (1+2\sqrt{2\ln(n)}) \leq 1.
\]
Clearly, we may assume that $n\ge n_0$.

Let $ 0\le s\le t \le 1$ with $\utn-s \ge 1/n$ and let $A\in \F_s$.
If $t=\utn$ then, for all $c\in (0,\infty)$ and all $z\in\R$,
\[
 \{(\widehat X_{n,t}-\xi) \cdot(\widehat X_{n,\utn}-\xi)\le 0\} =  \{\widehat X_{n,\utn}-\xi = 0\} \subset  \bigl\{|\widehat X_{n,\utn-(t-\utn)}-\xi| \le c\,n^{-1/2} \cdot(1+|z|)\bigr\},
\]
which implies that in this case~\eqref{central1} holds for all $c\ge 1/\sqrt{2\pi}$.

Now assume that $t > \utn$ and put
\[
Z_1 = \frac{W_t-W_{\utn}}{\sqrt{t-\utn}},\quad Z_2 = \frac{W_{\utn}-W_{\utn-(t-\utn)}}{\sqrt{t-\utn}},\quad Z_3=\frac{W_{\utn-(t-\utn)}-W_{\utn-1/n} }{\sqrt{1/n-(t-\utn)}}.
\]
Below we show that
\begin{equation}\label{central2}
\begin{aligned}
\bigl\{(\widehat X_{n,t}-\xi)\cdot (\widehat X_{n,\utn}-\xi)\le 0\bigr\}&\cap \bigl\{\max_{i\in\{1,2,3\}}|Z_i| \le \sqrt{2\ln(n)}\bigr\}\\
&\cap \bigl\{\max(|\widehat X_{\utn}-\xi|,|\widehat X_{\utn-1/n}-\xi|)< n^{(1/2-\gamma)/\ell_\sigma}\bigr\}  \\
& \hspace{-3cm} \subset \bigl\{|\widehat X_{n, \utn-(t-\utn)}-\xi| \le \kappa\, n^{-1/2}\cdot (1+|Z_1|+|Z_2|)\bigr\}.
\end{aligned}
\end{equation}
Note that $Z_1,Z_2,Z_3$ are independent and identically  distributed standard normal random variables. Moreover, $(Z_1,Z_2,Z_3)$ is independent of $\F_s$ since $s \le \utn-1/n$, $(Z_1,Z_2)$ is independent of $\F_{\utn-(t-\utn)}$ and $\widehat X_{n, \utn-(t-\utn)}$ is $\F_{\utn-(t-\utn)}$-measurable. Using the latter facts jointly with~\eqref{central2} and a standard estimate of standard normal tail probabilities we  obtain that
\begin{align*}
& \PP\bigl(A \cap \{(\widehat X_{n,t}-\xi)\cdot ( \widehat X_{n, \utn}-\xi)\le 0\}\bigr)\\
& \quad \le \PP\bigl(A\cap \{|\widehat X_{n, \utn-(t-\utn)}-\xi| \le \kappa\,n^{-1/2}\cdot(1+|Z_1|+|Z_2|)\}\bigr) \\
& \qquad    + \PP\bigl(A\cap \bigl\{\max_{i\in\{1,2,3\}}|Z_i| > \sqrt{2\ln(n)}\bigr\}\bigr) + \PP\bigl(A\cap\{\max(|\widehat X_{\utn}-\xi|,|\widehat X_{\utn-1/n}-\xi|)\geq n^{(1/2-\gamma)/\ell_\sigma}\}\bigr)\\
&  \quad  \le \frac{2}{\pi}\int_{[0,\infty)^2} \PP\bigl(A\cap\bigl\{|\widehat X_{n,\utn-(t-\utn)}-\xi| \le \kappa\,n^{-1/2}\cdot(1+z_1+z_2)\bigr\}\bigr)\,e^{-\frac{z_1^2+z_2^2}{2}}\, d(z_1,z_2)\\
& \qquad   +6\PP(A)\cdot \PP\bigl(\{Z_1 > \sqrt{2\ln(n)}\}\bigr)+ \PP\bigl(A\cap\{\max(|\widehat X_{\utn}-\xi|,|\widehat X_{\utn-1/n}-\xi|)\geq n^{(1/2-\gamma)/\ell_\sigma}\}\bigr)\\
&  \quad  \le \frac{2}{\pi}\int_{\R^2} \PP\Bigl(A\cap\Bigl\{|\widehat X_{n,\utn-(t-\utn)}-\xi| \le \sqrt{2}\kappa\,n^{-1/2} \cdot\bigl(1+|\tfrac{z_1+z_2}{\sqrt{2}}|\bigr)\Bigr\}\Bigr)\, e^{-\frac{z_1^2+z_2^2}{2}}\, d(z_1,z_2)\\
& \qquad \qquad   +\frac{6\PP(A)}{\sqrt{2\pi\, 2\ln(n)}\, n}+ \PP\bigl(A\cap\{\max(|\widehat X_{\utn}-\xi|,|\widehat X_{\utn-1/n}-\xi|)\geq n^{(1/2-\gamma)/\ell_\sigma}\}\bigr)\\
&  \quad  = \frac{4}{\sqrt{2\pi}}\int_{\R} \PP\bigl(A\cap\bigl\{|\widehat X_{n,\utn-(t-\utn)}-\xi| \le \sqrt{2}\kappa \,n^{-1/2} \cdot(1+|z|)\bigr\}\bigr)\, e^{-\frac{z^2}{2}}\, dz\\
& \qquad \qquad + \frac{3\PP(A)}{\sqrt{\pi \ln(n)}\, n} + \PP\bigl(A\cap\{\max(|\widehat X_{\utn}-\xi|,|\widehat X_{\utn-1/n}-\xi|)\geq n^{(1/2-\gamma)/\ell_\sigma}\}\bigr),
\end{align*}
which yields~\eqref{central1}.

It remains to prove the inclusion~\eqref{central2}. To this end let
\begin{equation}\label{central3}
\begin{aligned}
\omega\in \bigl\{(\widehat X_{n,t}-\xi) (\widehat X_{n,\utn}-\xi)\le 0\bigr\}&\cap \bigl\{\max_{i\in\{1,2,3\}}|Z_i| \le \sqrt{2\ln(n)}\bigr\}\\ &\cap \bigl\{\max(|\widehat X_{\utn}-\xi|,|\widehat X_{\utn-1/n}-\xi|)< n^{(1/2-\gamma)/\ell_\sigma}\bigr\}.
\end{aligned}
\end{equation}
Using ~\eqref{tmgX1}  and the fact that for all $a,b\in\R$ and all $q\in[0, \infty)$,
\begin{equation}\label{JJJ}
1+|a|^q\leq 2^q\,(1+|a-b|^q)\cdot (1+|b|^q),
\end{equation}
we obtain
\begin{equation}\label{LLL}
\begin{aligned}
&|\eul_{n,\utn}(\omega) -\xi|\\
 &\qquad \le |(\eul_{n,\utn}(\omega) -\xi) - (\eul_{n,t}(\omega) -\xi)| \\
&\qquad  = |\mu_n(\eul_{n,\utn}(\omega))\cdot (t-\utn) + \sigma_n(\eul_{n,\utn}(\omega))\cdot \sqrt{t-\utn}\, Z_1(\omega)|\\
&\qquad \le K\,n^{-1/2}\cdot(1+|\eul_{n,\utn}(\omega)|) + K\,n^{-1/2}\cdot(1+|\eul_{n,\utn}(\omega)|^{\ell_\sigma+1})\cdot |Z_1(\omega)|\\
&\qquad \le 2K\,n^{-1/2}\cdot(1+|\eul_{n,\utn}(\omega)|^{\ell_\sigma+1})\cdot(1+  |Z_1(\omega)|)\\
&\qquad \le 2^{\ell_\sigma+2}K\,n^{-1/2}\cdot(1+|\eul_{n,\utn}(\omega)-\xi|^{\ell_\sigma+1})\cdot(1+|\xi|^{\ell_\sigma+1})\cdot (1+  |Z_1(\omega)|).
\end{aligned}
\end{equation}
First assume that $\ell_\sigma=0$. Using the assumption $n\geq n_0$ and \eqref{central3} we obtain
\[
4K\,n^{-1/2}\cdot(1+|\xi|)\cdot (1+  |Z_1(\omega)|)\leq \frac{1}{2}
\]
and therefore it follows from \eqref{LLL} that
\begin{equation}\label{LLL1}
|\eul_{n,\utn}(\omega) -\xi|\le 8K\,n^{-1/2}\cdot(1+|\xi|)\cdot (1+  |Z_1(\omega)|).
\end{equation}
 Next assume that $\ell_\sigma>0$.
If $|\eul_{n,\utn}(\omega) -\xi|> 1$ then \eqref{LLL} and the assumption $n \geq n_0$  imply
\begin{align*}
|\eul_{n,\utn}(\omega) -\xi|  &\le 2^{\ell_\sigma+3}K\,n^{-1/2}\cdot |\eul_{n,\utn}(\omega)-\xi|^{\ell_\sigma+1}\cdot(1+|\xi|^{\ell_\sigma+1})\cdot (1+  |Z_1(\omega)|)\\
&\leq n^{-(1/2-\gamma)}\cdot |\eul_{n,\utn}(\omega)-\xi|^{\ell_\sigma+1}.
\end{align*}
Thus,
\[
|\eul_{n,\utn}(\omega)-\xi|\geq n^{(1/2-\gamma)/{\ell_\sigma}},
\]
which is in contradiction with \eqref{central3}. Hence, $|\eul_{n,\utn}(\omega) -\xi|\leq 1$ and   \eqref{LLL} then yields
\begin{equation}\label{central4}
|\eul_{n,\utn}(\omega) -\xi|  \le 2^{\ell_\sigma+3}K\,n^{-1/2}\cdot(1+|\xi|^{\ell_\sigma+1})\cdot (1+  |Z_1(\omega)|).
\end{equation}

Similarly to \eqref{LLL}, we obtain by \eqref{tmgX1}
and \eqref{JJJ} that
\begin{equation}\label{central5}
\begin{aligned}
& |\eul_{n,\utn}(\omega)-\eul_{n,\utn-(t-\utn)}(\omega)| \\
& \qquad =
|\mu_n(\eul_{n,\utn-1/n}(\omega))\cdot (t-\utn) + \sigma_n(\eul_{n,\utn-1/n}(\omega))\, \sqrt{t-\utn}\, Z_2(\omega)|\\
& \qquad\leq 2^{\ell_\sigma+2}K\,n^{-1/2}\cdot(1+|\eul_{n,\utn-1/n}(\omega)-\xi|^{\ell_\sigma+1})\cdot(1+|\xi|^{\ell_\sigma+1})\cdot (1+  |Z_2(\omega)|).
\end{aligned}
\end{equation}
Moreover, employing  \eqref{tmgX1}, \eqref{central3} and \eqref{JJJ} we conclude that
\begin{equation}\label{central6}
\begin{aligned}
& |\eul_{n,\utn-(t-\utn)}(\omega)-\eul_{n,\utn-1/n}(\omega)|\\
& \qquad= |\mu_n(\eul_{n,\utn-1/n}(\omega))\cdot (1/n -(t-\utn)) + \sigma_n(\eul_{n,\utn-1/n}(\omega))\, \sqrt{1/n-(t-\utn)}\, Z_3(\omega)| \\
& \qquad\leq 2^{\ell_\sigma+2}K\,n^{-1/2}\cdot(1+|\eul_{n,\utn-1/n}(\omega)-\xi|^{\ell_\sigma+1})\cdot(1+|\xi|^{\ell_\sigma+1})\cdot (1+  |Z_3(\omega)|)\\
& \qquad\leq 2^{\ell_\sigma+2}K\,n^{-1/2}\cdot(1+n^{1/2-\gamma}|\eul_{n,\utn-1/n}(\omega)-\xi|)\cdot(1+|\xi|^{\ell_\sigma+1})\cdot (1+  |Z_3(\omega)|)\\
& \qquad\leq 2^{\ell_\sigma+3}K\,n^{-\gamma}\cdot(1+|\eul_{n,\utn-1/n}(\omega)-\xi|)\cdot(1+|\xi|^{\ell_\sigma+1})\cdot (1+  |Z_3(\omega)|).
\end{aligned}
\end{equation}
Since
$n\geq n_0$
we have
\[
2^{\ell_\sigma+3}K\,n^{-\gamma}\cdot(1+|\xi|^{\ell_\sigma+1})\cdot (1+  |Z_3(\omega)|) \le \frac{\kappa}{2} (1+\sqrt{2\ln(n)})\le \frac{1}{2}
 \]
and therefore~\eqref{central6} yields that
\begin{align*}
 1 +|\eul_{n,\utn-(t-\utn)}(\omega) -\xi|  &\ge 1 + |\eul_{n,\utn-1/n}(\omega)-\xi| -|\eul_{n,\utn-(t-\utn)}(\omega)-\eul_{n,\utn-1/n}(\omega)|\\
&\ge (1 + |\eul_{n,\utn-1/n}(\omega)-\xi|)/2.
\end{align*}
Thus,
\begin{equation}\label{central7}
\begin{aligned}
1 + |\eul_{n,\utn-1/n}(\omega)-\xi|^{\ell_\sigma+1}&\leq 1+(1+2\,|\eul_{n,\utn-(t-\utn)}(\omega) -\xi| )^{\ell_\sigma+1}\\
&\leq 2^{2\ell_\sigma+3} \,(1+|\eul_{n,\utn-(t-\utn)}(\omega) -\xi| ^{\ell_\sigma+1}).
\end{aligned}
\end{equation}
Using~ \eqref{LLL1}, \eqref{central4}, \eqref{central5} and~\eqref{central7} we obtain
\begin{align*}
& |\eul_{n,\utn-(t-\utn)}(\omega) -\xi|\\
 &\qquad   \le |\eul_{n,\utn}(\omega)-\eul_{n,\utn-(t-\utn)}(\omega)| +|\eul_{n,\utn}(\omega)-\xi|\\
& \qquad  \le 2^{\ell_\sigma+4}K\, n^{-1/2} \cdot (1+|\eul_{n,\utn-1/n}(\omega)-\xi|^{\ell_\sigma+1})\cdot  (1+|\xi|^{\ell_\sigma+1})\cdot  (1+|Z_1(\omega)|+|Z_2(\omega)|)\\
& \qquad  \leq 2^{3\ell_\sigma+7}K\, n^{-1/2} \cdot (1+|\eul_{n,\utn-(t-\utn)}(\omega) -\xi|^{\ell_\sigma+1})\cdot  (1+|\xi|^{\ell_\sigma+1})\cdot  (1+|Z_1(\omega)|+|Z_2(\omega)|).
\end{align*}
Arguing  similarly as for the proof of \eqref{LLL1} and \eqref{central4} we conclude that
\begin{align*}
|\eul_{n,\utn-(t-\utn)}(\omega) -\xi| &\le \kappa\, n^{-1/2}\cdot  (1+|Z_1(\omega)|+|Z_2(\omega)|).
\end{align*}
This finishes the proof of~\eqref{central2} and completes the proof of the lemma.
\end{proof}

Using Lemmas~\ref{markov},~\ref{occup} and~\ref{central} we can now establish the following two estimates on the probability of sign changes of $\eul_{n}-\xi$ relative to its sign at the gridpoints $0,1/n,\dots,1$.

\begin{lemma}\label{key}
Assume (A1) to (A4) and $p_0\ge  \ell_\mu+\ell_\sigma +2$.
Let $\xi\in\R$ satisfy $\sigma(\xi)\neq 0$ and let
\[
A_{n,t} =\{(\eul_{n,t}-\xi)\cdot(\eul_{n, \utn}-\xi)\leq 0\}
\]
for all $n\in\N$ and $t\in[0,1]$.
Then the following
two
statements hold.
\begin{itemize}
\item[(i)] There exists $c\in(0, \infty)$ such that, for all $n\in\N$, all $s\in [0,1)$ and all $A\in \F_s$,
\[
 \int_{s}^1\PP(A \cap A_{n,t})\, dt \le \frac{c}{\sqrt n} \cdot \bigl( \PP(A) + \EE\bigl[\ind_A \, |\eul_{n,\usn+1/n}-\xi|^{p_0}\bigr]\bigr).
\]
\item[(ii)] There exists $c\in(0, \infty)$ such that, for all $n\in\N$, all $s\in[0,1)$ and all $A\in \F_{s}$,
\[
\int_s^1\EE\bigl[\ind_{A\cap A_{n,t}} \, |\eul_{n,\utn+1/n}-\xi|^{p_0}\bigr]\, dt \le  \frac{c}{n^{\min(1,p_0/4)}} \cdot \bigl(\PP(A) + \EE\bigl[\ind_A \, |\eul_{n,\usn+1/n}-\xi|^{p_0}\bigr]\bigr).
\]
\end{itemize}
\end{lemma}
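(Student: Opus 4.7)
The plan is to combine the central-limit-type estimate of Lemma~\ref{central}, the occupation-time estimate of Lemma~\ref{occup}, the Markov property of Lemma~\ref{markov} and different geometric ingredients for the two parts. A common preliminary, obtained by conditioning on $\F_{\usn+1/n}\supset\F_s$, invoking Lemma~\ref{markov} at the grid point $\usn+1/n$ and using the uniform moment bound~\eqref{tmg002a} applied to $\eul_n^y$, is that for every $A\in\F_s$ and every $r\in[\usn+1/n,1]$
\[
\EE\bigl[\ind_A\cdot(1+|\eul_{n,r}|)^{p_0}\bigr]\le c\cdot\bigl(\PP(A)+\EE\bigl[\ind_A|\eul_{n,\usn+1/n}-\xi|^{p_0}\bigr]\bigr).
\]

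For part (i), I split $\int_s^1=\int_s^{\usn+2/n}+\int_{\usn+2/n}^1$. The boundary piece contributes at most $(2/n)\PP(A)$, and on the second piece $\utn-s\ge 1/n$, so Lemma~\ref{central} applies with a parameter $\gamma\in(0,1/2)$ to be chosen and produces three terms. The first integrates to $(c/n)\PP(A)$. For the second I use the Chebyshev-Markov inequality with exponent $p_0$ together with the preliminary estimate to get a bound proportional to $n^{-p_0(1/2-\gamma)/\ell_\sigma}(\PP(A)+\EE[\ind_A|\eul_{n,\usn+1/n}-\xi|^{p_0}])$; since $p_0\ge \ell_\mu+\ell_\sigma+2>\ell_\sigma$ I can choose $\gamma$ so small that this exponent is at most $-1/2$ (the case $\ell_\sigma=0$ is vacuous). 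For the third term I perform the substitution $u=\utn-(t-\utn)$ within each grid subinterval, which turns the nested integral into
\[
c\int_\R\Bigl(\int_{\usn+1/n}^{1-1/n}\PP\bigl(A\cap\bigl\{|\eul_{n,u}-\xi|\le c n^{-1/2}(1+|z|)\bigr\}\bigr)\,du\Bigr)e^{-z^2/2}\,dz.
\]
Conditioning on $\F_{\usn+1/n}$ by Lemma~\ref{markov} and then applying Lemma~\ref{occup} to $\eul_n^y$ at $y=\eul_{n,\usn+1/n}$ bounds the bracket by $c(n^{-1/2}(1+|z|)+n^{-1/2})\cdot\EE[\ind_A(1+|\eul_{n,\usn+1/n}|^{\ell_\mu+\ell_\sigma+2})]$. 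The assumption $p_0\ge \ell_\mu+\ell_\sigma+2$ together with the preliminary estimate absorbs this polynomial factor, and the Gaussian integration in $z$ leaves the claimed $c/\sqrt n$ rate.

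For part (ii) the key geometric observation is that on $A_{n,t}$ the point $\xi$ lies between $\eul_{n,\utn}$ and $\eul_{n,t}$, whence $|\eul_{n,\utn}-\xi|\le|\eul_{n,t}-\eul_{n,\utn}|$ and so, by the triangle inequality,
\[
|\eul_{n,\utn+1/n}-\xi|^{p_0}\cdot\ind_{A_{n,t}}\le c\cdot\bigl(|\eul_{n,\utn+1/n}-\eul_{n,\utn}|^{p_0}+|\eul_{n,t}-\eul_{n,\utn}|^{p_0}\bigr).
\]
I split $\int_s^1=\int_s^{\usn+1/n}+\int_{\usn+1/n}^1$. The first piece contributes at most $(1/n)\EE[\ind_A|\eul_{n,\usn+1/n}-\xi|^{p_0}]$ since $\utn=\usn$ there. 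On the second piece $A\in\F_s\subset\F_\utn$, and conditioning on $\F_\utn$ together with the tamed-coefficient bounds $|\mu_n(x)|\le c\sqrt n(1+|x|)$ and $|\sigma_n(x)|^2\le c\sqrt n(1+x^2)$ from Lemma~\ref{tamedcoeff1}(i) and $\EE[|W_r-W_\utn|^{p_0}\mid\F_\utn]\le cn^{-p_0/2}$ for $r\in\{t,\utn+1/n\}$ yields $\EE[|\eul_{n,r}-\eul_{n,\utn}|^{p_0}\mid\F_\utn]\le cn^{-p_0/4}(1+|\eul_{n,\utn}|)^{p_0}$. Integrating in $t$ and using the preliminary estimate produces $cn^{-p_0/4}(\PP(A)+\EE[\ind_A|\eul_{n,\usn+1/n}-\xi|^{p_0}])$; combined with the boundary contribution this yields the rate $\max(1/n,n^{-p_0/4})=n^{-\min(1,p_0/4)}$.

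The main obstacle will be in part (i): one has to handle the backward time index $\utn-(t-\utn)$ in Lemma~\ref{central} in a way compatible with the Markov property at $\usn+1/n$, and one has to verify that the polynomial prefactor coming from Lemma~\ref{occup} is absorbed by the available $p_0$-th moment via the hypothesis $p_0\ge \ell_\mu+\ell_\sigma+2$.
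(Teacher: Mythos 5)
Your proposal is correct and follows essentially the same route as the paper's proof: the same split at $\usn+2/n$ (resp.\ $\usn+1/n$), the same application of Lemma~\ref{central} followed by the change of variables, the Markov property at the grid point $\usn+1/n$ combined with Lemma~\ref{occup} and the Markov inequality for the large-value term, and the same geometric observation reducing $|\eul_{n,\utn+1/n}-\xi|$ on $A_{n,t}$ to one-step increments in part (ii). Your two small deviations (using exponent $p_0$ with a small $\gamma$ instead of the paper's exponent $\ell_\sigma/(1-2\gamma)$, and conditioning on $\F_{\utn}$ with the one-step tamed bounds instead of invoking estimate~\eqref{tmg003b} through the Markov property) are harmless reformulations of the same computations.
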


\begin{proof}
Let $n\in\N$, $s\in [0,1)$ and $A\in\F_s$.
In the following we use $c, c_1,c_2,\dots \in (0,\infty)$ to denote positive constants that may change their values in every appearance but  neither depend on $n$ nor on $s$ nor on $A$.

We first prove part (i) of the lemma.
Clearly we may assume that $s<1-1/n$. Then $\usn\le 1-2/n$ and we have
\begin{equation}\label{key00}
 \int_{s}^1\PP(A \cap A_{n,t})\, dt \le \frac{2}{n}\, \PP(A) +  \int_{\usn + 2/n}^1\PP(A \cap A_{n,t})\, dt.
\end{equation}
Let $\gamma\in (0, 1/4]$.
 If $t\in [\usn+2/n,1]$ then $\utn \ge \usn+2/n$, which implies $\utn-1/n \ge \usn+1/n \ge s$.
We may thus apply Lemma~\ref{central} to conclude
 that
there exists $c\in(0, \infty)$ such that
\begin{align*}
& \int_{s}^1 \PP(A\cap A_{n,t})\, dt\\
& \qquad \le
\frac{c}{n}\, \PP(A) + c\cdot\int_{\usn+2/n}^1  \PP\bigl(A\cap\{\max(|\widehat X_{n,\utn}-\xi|,|\widehat X_{n,\utn-1/n}-\xi|)\geq n^{(1/2-\gamma)/\ell_\sigma}\}\bigr)\, dt \\
& \qquad\qquad +
c\cdot \int_{\R} \int_{\usn+2/n}^1 \PP\bigl(A \cap \bigl\{|\widehat X_{n,\utn-(t-\utn)}-\xi| \le c n^{-1/2}\cdot (1+|z|)\bigr\}\bigr)\, e^{-\frac{z^2}{2}}\, dt\, dz.
\end{align*}
By the change-of-variable formula we have for all $i\in\{1, \ldots, n-1\}$ and all $\kappa\in \R$,
\[
\int_{i/n}^{(i+1)/n} \PP\bigl(A \cap \bigl\{|\widehat X_{n,\utn-(t-\utn)}-\xi| \le\kappa \bigr\}\bigr)\, dt=
\int_{(i-1)/n}^{i/n} \PP\bigl(A \cap \bigl\{|\widehat X_{n,t}-\xi| \le\kappa\bigr\}\bigr)\, dt.
\]
Moreover,
\begin{align*}
& \int_{\usn+2/n}^1  \PP\bigl(A\cap\{\max(|\widehat X_{n,\utn}-\xi|,|\widehat X_{n,\utn-1/n}-\xi|)  \geq n^{(1/2-\gamma)/\ell_\sigma}\}\bigr)\, dt \\
& \qquad\le \int_{\usn+2/n}^1  \bigl( \PP\bigl(A\cap\{|\widehat X_{n,\utn}-\xi|\geq n^{(1/2-\gamma)/\ell_\sigma}\}\bigr) + \PP\bigl(A\cap\{|\widehat X_{n,\utn-1/n}-\xi|\geq n^{(1/2-\gamma)/\ell_\sigma}\}\bigr)\bigr)\, dt \\
& \qquad\le 2\int_{\usn+1/n}^1  \PP\bigl(A\cap\{|\widehat X_{n,\utn}-\xi|\geq n^{(1/2-\gamma)/\ell_\sigma}\}\bigr)\, dt.
\end{align*}
Thus, there exists $c\in(0, \infty)$ such that
\begin{equation}\label{key01}
\begin{aligned}
& \int_{s}^1 \PP(A\cap A_{n,t})\, dt\\
& \qquad \leq \frac{c}{n}\, \PP(A) + c\cdot\int_{\usn+1/n}^1  \PP\bigl(A\cap\{|\widehat X_{n,\utn}-\xi|\geq n^{(1/2-\gamma)/\ell_\sigma}\}\}\bigr)\, dt\\
& \qquad \qquad + c\cdot \int_{\R} \int_{\usn+1/n}^{1-1/n} \PP\bigl(A \cap \bigl\{|\widehat X_{n,t}-\xi| \le c n^{-1/2}\cdot (1+|z|)\bigr\}\bigr)\, e^{-\frac{z^2}{2}}\, dt\, dz.
\end{aligned}
\end{equation}

 By the fact that $A\in \F_{\usn+1/n}$ and by
Lemma~\ref{markov} we obtain that for all $z\in\R$,
\begin{equation}\label{key02}
\begin{aligned}
& \int_{\usn+1/n}^{1-1/n} \PP\bigl(A \cap \bigl\{|\widehat X_{n,t}-\xi| \le c n^{-1/2}\cdot (1+|z|)\bigr\}\bigr)\, dt \\
& \qquad\qquad \qquad = \EE\Bigl[\ind_{A}\, \EE\Bigl[\int_{\usn+1/n}^{1-1/n} \ind_{\{|\widehat X_{n,t}-\xi| \le c n^{-1/2}\cdot (1+|z|)\}}\, dt\Bigl|\eul_{n,\usn+1/n}\Bigr]\Bigr].
\end{aligned}
\end{equation}
Moreover,
by  Lemmas~\ref{markov} and ~\ref{occup}
 we obtain that there exist $c_1, c_2\in(0, \infty)$ such that,  for all $z\in\R$
and $\PP ^{\eul_{n,\usn+1/n}} $-almost all
$x\in\R$,
\begin{equation}\label{key03}
\begin{aligned}
& \EE\Bigl[\int_{\usn+1/n}^{1-1/n} \ind_{\{|\widehat X_{n,t}-\xi| \le c n^{-1/2}\cdot (1+|z|)\}}\, dt\Bigl|\eul_{n,\usn+1/n}=x\Bigr]\\
 & \qquad\qquad = \EE\Bigl[\int_{0}^{1-2/n-\usn} \ind_{\{|\widehat X^x_{n,t}-\xi| \le c n^{-1/2}\cdot (1+|z|)\}}\, dt\Bigr]\\
 & \qquad\qquad \le c_1\cdot \bigl(1+|x|^{\ell_\mu + \ell_\sigma+2}\bigr)
 \cdot \Bigl( \frac{c}{\sqrt{n}} \cdot(1+|z|) + \frac{1}{\sqrt{n}}\Bigr)\\
 &\qquad\qquad\leq  \frac{c_2}{\sqrt n}\cdot (1+|z|)\cdot \bigl(1+|x|^{\ell_\mu + \ell_\sigma+2}\bigr).
\end{aligned}
\end{equation}
Combining~\eqref{key02} and~\eqref{key03}
we conclude that there exist $c_1, c_2\in(0, \infty)$ such that, for  all $z\in\R$,
\begin{equation}\label{key04}
\begin{aligned}
& \int_{\usn+1/n}^{1-1/n} \PP\bigl(A \cap \bigl\{|\widehat X_{n,t}-\xi| \le c n^{-1/2}\cdot (1+|z|)\bigr\}\bigr)\, dt \\
& \qquad  \le \frac{c_1}{\sqrt n}\cdot (1+|z|)\cdot \EE\bigl[\ind_{A}\, (1 + |\eul_{n,\usn+1/n}|^{\ell_\mu + \ell_\sigma+2})\bigr]\\
 & \qquad \le \frac{c_2}{\sqrt n} \cdot (1+|z|)\cdot \bigl(\PP(A) + \EE\bigl[\ind_{A}\, |\eul_{n,\usn+1/n}-\xi|^{\ell_\mu + \ell_\sigma+2}\bigr]\bigr).
\end{aligned}
\end{equation}
Hence, there exist $c_1, c_2\in(0, \infty)$ such that
\begin{equation}\label{key04a}
\begin{aligned}
& \int_{\R} \int_{\usn+1/n}^{1-1/n} \PP\bigl(A \cap \bigl\{|\widehat X_{n,t}-\xi| \le c\, n^{-1/2}\cdot (1+|z|)\bigr\}\bigr)\, e^{-\frac{z^2}{2}}\, dt\, dz\\
& \qquad \qquad \le \frac{c_1}{\sqrt n}\cdot \bigl(\PP(A) + \EE\bigl[\ind_{A}\, |\eul_{n,\usn+1/n}-\xi|^{\ell_\mu + \ell_\sigma+2}\bigr]\bigr) \cdot\int_{\R} (1+|z|)\cdot e^{-\frac{z^2}{2}}\,  dz\\
 & \qquad \qquad \le \frac{c_2}{\sqrt n}\cdot\bigl(\PP(A) + \EE\bigl[\ind_{A}\, |\eul_{n,\usn+1/n}-\xi|^{p_0}\bigr]\bigr).
\end{aligned}
\end{equation}

Next, we use $A\in \F_{\usn+1/n}$ and
Lemma~\ref{markov} to obtain
\begin{equation}\label{key04b}
\begin{aligned}
&\int_{\usn+1/n}^1  \PP\bigl(A\cap\{|\widehat X_{n,\utn}-\xi|\geq n^{(1/2-\gamma)/\ell_\sigma}\}\bigr)\, dt\\
&\qquad\qquad = \EE\Bigl[\ind_{A}\, \EE\Bigl[\int_{\usn+1/n}^{1} \ind_{\{|\widehat X_{n,\utn}-\xi|\geq n^{(1/2-\gamma)/\ell_\sigma}\}}\, dt\Bigl|\eul_{n,\usn+1/n}\Bigr]\Bigr].
\end{aligned}
\end{equation}
Moreover,
by  Lemmas~\ref{markov}, ~\ref{eulprop},
the Markov inequality
 and the fact that $\ell_\sigma/(1-2\gamma)\leq 2\ell_\sigma< p_0$
 we see that there exists $c\in(0, \infty)$ such that,  for  $\PP ^{\eul_{n,\usn+1/n}} $-almost all
$x\in\R$,
\begin{align*}
& \EE\Bigl[\int_{\usn+1/n}^{1} \ind_{\{|\widehat X_{n,\utn}-\xi|\geq n^{(1/2-\gamma)/\ell_\sigma}\}}\, dt\Bigl|\eul_{n,\usn+1/n}= x\Bigr] \\
& \qquad\qquad  =  \EE\Bigl[\int_{0}^{1-\usn-1/n}  \ind_{\{|\widehat X^x_{n,\utn}-\xi|\geq n^{(1/2-\gamma)/\ell_\sigma}\}}\, dt\Bigr]\le \int_0^1 \PP\bigl( |\widehat X^x_{n,\utn}-\xi|\geq n^{(1/2-\gamma)/\ell_\sigma}\bigr)\, dt\\
& \qquad\qquad  \leq \frac{1}{\sqrt n}\cdot \sup_{t\in[0,1]}  \EE[|\widehat X^x_{n,t}-\xi|^{\ell_\sigma/(1-2\gamma)}\bigr] \le \frac{c}{\sqrt n}\cdot  (1+|x|^{p_0}).
\end{align*}
Hence, there exist $c_1, c_2\in(0, \infty)$ such that
\begin{equation}\label{key04c}
\begin{aligned}
&\int_{\usn+1/n}^1  \PP\bigl(A\cap\{|\widehat X_{n,\utn}-\xi|\geq n^{(1/2-\gamma)/\ell_\sigma}\}\bigr)\, dt\\
&\qquad  \le \frac{c_1}{\sqrt n}\cdot \EE\bigl[1_A \, \bigl(1+ |\eul_{n,\usn+1/n}|^{p_0}\bigr)\bigr] \le \frac{c_2}{\sqrt n}\cdot \bigl(\PP(A) + \EE\bigl[1_A \,  |\eul_{n,\usn+1/n}-\xi|^{p_0}\bigr]\bigr).
\end{aligned}
\end{equation}

Combining~\eqref{key01} with~\eqref{key04a} and~\eqref{key04c} yields that there exists $c\in(0, \infty)$ such that
\begin{equation}\label{key04d}
 \int_{s}^1\PP(A \cap A_{n,t})\, dt \le \frac{c}{\sqrt n}\cdot \bigl(\PP(A) + \EE\bigl[1_A \,  |\eul_{n,\usn+1/n}-\xi|^{p_0}\bigr]\bigr),
\end{equation}
 which completes the proof of part (i) of the lemma.

We next prove part (ii) of the lemma.
Clearly,
\begin{align*}
& \int_s^{1} \EE\bigl[\ind_{A\cap A_{n,t}} \,|\eul_{n,\utn+1/n}-\xi|^{p_0}\bigr]\, dt \\
& \qquad =\int_s^{\usn+1/n}\EE\bigl[\ind_{A\cap A_{n,t}} \,|\eul_{n,\utn+1/n}-\xi|^{p_0}\bigr]\, dt+ \int_{\usn+1/n}^1\EE\bigl[\ind_{A\cap A_{n,t}} \,|\eul_{n,\utn+1/n}-\xi|^{p_0}\bigr]\, dt.
\end{align*}

If $t\in[s, \usn+1/n)$ then $\underline t_n=\underline s_n$ and therefore
\begin{equation}\label{key003}
\begin{aligned}
 \int_s^{\usn+1/n} \EE\bigl[\ind_{A\cap A_{n,t}}  \,|\eul_{n,\utn+1/n}-\xi|^{p_0}\bigr]\, dt&=\int_s^{\usn+1/n}\EE\bigl[\ind_{A\cap A_{n,t}}\,|\eul_{n,\usn+1/n}-\xi|^{p_0} \bigr]\, dt\\
& \le  \int_s^{\usn+1/n}  \EE\bigl[\ind_A \,|\eul_{n,\usn+1/n}-\xi|^{p_0}\bigr]\, dt
\\ &  \le \frac{1}{n}\,  \EE\bigl[\ind_A\,|\eul_{n,\usn+1/n}-\xi|^{p_0}\bigr].
\end{aligned}
\end{equation}

Next, let $t\in [\usn+1/n, 1]$.  Clearly, we have on $A_{n,t}$,
\begin{align*}
|\eul_{n,\utn+1/n}-\xi| &\le |\eul_{n,\utn+1/n}-\eul_{n,t}| + |\eul_{n,t}-\xi| \le |\eul_{n,\utn+1/n}-\eul_{n,t}| + |\eul_{n,t}-\eul_{n,\utn}|.
\end{align*}
Hence, by Lemma~\ref{markov} and the fact that $A\in  \mathcal F_{\usn+1/n}$,
\begin{equation}\label{key05}
\begin{aligned}
& \EE\bigl[\ind_{A\cap A_{n,t}}\, |\eul_{n,\utn+1/n} - \xi|^{p_0}\bigr] \\ &  \qquad\quad  \le \EE \bigl[\ind_A \, (|\eul_{n,\utn+1/n}-\eul_{n,t}| + |\eul_{n,t}-\eul_{n,\utn}|)^{p_0} \bigr]\\ & \qquad\quad = \EE\bigl[ \ind_A \cdot \EE\bigl[(|\eul_{n,\utn+1/n}-\eul_{n,t}| + |\eul_{n,t}-\eul_{n,\utn}|)^{p_0}\bigl| \eul_{n,\usn+1/n}\bigr]\bigr].
\end{aligned}
\end{equation}
If $t\ge \usn+1/n$ then $\utn \ge \usn+1/n$. Hence, by Lemma~\ref{markov} and estimate~\eqref{tmg003b}
in Lemma~\ref{eulprop} we obtain that there exist $c_1, c_2\in(0, \infty)$ such that,  for all  $t\in [\usn+1/n, 1]$ and
$\PP ^{\eul_{n,\usn+1/n}} $-almost all
$x\in\R$,
\begin{equation}\label{key06}
\begin{aligned}
&  \EE\bigl[(|\eul_{n,\utn+1/n}-\eul_{n,t}| + |\eul_{n,t}-\eul_{n,\utn}|)^{p_0}\bigl| \eul_{n,\usn+1/n} = x\bigr]\\
& \qquad \quad = \EE\bigl[(|\eul^x_{n,\utn-\usn}-\eul^x_{n,t-\usn-1/n}| + |\eul^x_{n,t-\usn-1/n}-\eul^x_{\utn-\usn-1/n}|)^{p_0}\bigr]
\\& \qquad\quad \le  \frac{c_1}{n^{p_0/4}}\cdot (1+|x|^{p_0})
\le  \frac{c_2}{n^{p_0/4}}\cdot (1+|x-\xi|^{p_0}).
\end{aligned}
\end{equation}
It follows from~\eqref{key05} and~\eqref{key06} that there exists $c\in(0, \infty)$ such that
\begin{equation}\label{key07}
\begin{aligned}
& \int_{\usn+1/n}^1\EE\bigl[\ind_{A\cap A_{n,t}}\, |\eul_{n,\utn+1/n} - \xi|^{p_0}\bigr]\, dt \\
& \qquad\qquad  \le \frac{c}{n^{p_0/4}}
\, \int_{\usn+1/n}^1 \EE\bigl[\ind_A\, (1+|\eul_{n,\usn+1/n}-\xi|^{p_0}\bigr]\, dt\\
& \qquad\qquad \le \frac{c}{n^{p_0/4}}
\cdot \bigl(\PP(A) + \EE\bigl[\ind_A\, |\eul_{n,\usn+1/n}-\xi|^{p_0}\bigr]\bigr).
\end{aligned}
\end{equation}
Combining~\eqref{key003} with~\eqref{key07} completes the proof of part (ii) of the lemma.
\end{proof}

We are ready to establish the main result in this section, which provides a $p$-th mean estimate of the
Lebesgue measure of the set
of times $t$ of a sign change of $\eul_{n,t}-\xi$ relative to the sign of $\eul_{n,\utn}-\xi$.

\begin{prop}\label{prop1}
Assume (A1) to (A4) and $p_0\ge \ell_\mu + \ell_\sigma +2$.
Let $\xi\in\R$ satisfy $\sigma(\xi)\not=0$ and let
 $p\in [1,\infty)$.
Then there exists  $c\in(0, \infty)$ such that, for all $n\in\N$,
\begin{equation}\label{l33}
\EE\Bigl[\Bigl|\int_0^1  \ind_{\{(\eul_{n,t}-\xi)\cdot(\eul_{n, \utn}-\xi)\leq 0\}}\,dt\Bigr|^p\Bigr]^{1/p}\leq c\,n^{-1/2}.
\end{equation}
\end{prop}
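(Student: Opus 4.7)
The plan is to bound the $p$-th moment of $I_n := \int_0^1 \ind_{A_{n,t}}\,dt$ by iteratively applying both parts of Lemma~\ref{key}. I will first treat integer moments via a Fubini expansion combined with an induction, then extend to all $p\in[1,\infty)$ by Lyapunov's inequality.

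For every integer $k\ge 1$, Fubini's theorem gives
\begin{equation*}
\EE[I_n^k]=k!\int_{\{0<t_1<\dots<t_k<1\}}\PP\Bigl(\bigcap_{i=1}^k A_{n,t_i}\Bigr)\,dt_1\cdots dt_k.
\end{equation*}
More generally, for $s\in[0,1]$ and $A\in\F_s$, I will set
\begin{equation*}
I_k(A,s):=\int_{\{s<t_1<\dots<t_k<1\}}\PP\Bigl(A\cap\bigcap_{i=1}^k A_{n,t_i}\Bigr)\,dt_1\cdots dt_k
\end{equation*}
and prove by induction on $k$ that there exists $\alpha_k\in(0,\infty)$, independent of $n$, $s$ and $A$, with
\begin{equation*}
I_k(A,s)\le \frac{\alpha_k}{n^{k/2}}\bigl(\PP(A)+\EE[\ind_A\,|\eul_{n,\usn+1/n}-\xi|^{p_0}]\bigr).
\end{equation*}

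The base case $k=1$ is exactly Lemma~\ref{key}(i). For the induction step I observe that $A_{n,r}\in\F_r$ for every $r\in[0,1]$, so $A\cap A_{n,r}\in\F_r$ whenever $A\in\F_s$ and $r\ge s$. Applying the induction hypothesis to the inner $k$-fold integral with $A$ replaced by $A\cap A_{n,r}$ and $s$ replaced by $r$ yields
\begin{equation*}
I_{k+1}(A,s)\le \frac{\alpha_k}{n^{k/2}}\int_s^1\Bigl(\PP(A\cap A_{n,r})+\EE\bigl[\ind_{A\cap A_{n,r}}|\eul_{n,\urn+1/n}-\xi|^{p_0}\bigr]\Bigr)\,dr.
\end{equation*}
Lemma~\ref{key}(i) bounds the first resulting $r$-integral by $c_1 n^{-1/2}(\PP(A)+\EE[\ind_A|\eul_{n,\usn+1/n}-\xi|^{p_0}])$ and Lemma~\ref{key}(ii) bounds the second by $c_2 n^{-\min(1,p_0/4)}(\PP(A)+\EE[\ind_A|\eul_{n,\usn+1/n}-\xi|^{p_0}])$. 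Since $p_0\ge 2$, we have $\min(1,p_0/4)\ge 1/2$, so both contributions carry a factor $n^{-1/2}$ and the induction closes with $\alpha_{k+1}=\alpha_k(c_1+c_2)$.

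Finally, taking $A=\Omega$ and $s=0$ and invoking the uniform moment bound~\eqref{tmg002a} in Lemma~\ref{eulprop}, I obtain a constant $C\in(0,\infty)$, independent of $n$, with $1+\EE[|\eul_{n,1/n}-\xi|^{p_0}]\le C$. Consequently $\EE[I_n^k]=k!\,I_k(\Omega,0)\le k!\alpha_k C\,n^{-k/2}$ for every integer $k$, and for arbitrary $p\in[1,\infty)$ I pick an integer $k\ge p$ and apply Lyapunov's inequality $\EE[I_n^p]^{1/p}\le \EE[I_n^k]^{1/k}$ to conclude. The main subtlety in this plan lies in the bookkeeping of the residual moment term $\EE[\ind_A|\eul_{n,\usn+1/n}-\xi|^{p_0}]$: it has to propagate cleanly through every level of the induction, which is exactly the role of Lemma~\ref{key}(ii), and the threshold $p_0\ge 2$ is what makes $\min(1,p_0/4)\ge 1/2$ so that each induction step contributes the optimal rate $n^{-1/2}$.
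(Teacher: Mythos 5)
Your proposal is correct and follows essentially the same route as the paper: a Fubini expansion of the integer moments of the occupation time, followed by an induction driven by Lemma~\ref{key}(i) and (ii), with the residual term $\EE[\ind_A|\eul_{n,\usn+1/n}-\xi|^{p_0}]$ propagated through each level and controlled at the end via \eqref{tmg002a}. The only difference is organizational — you state the inductive claim uniformly over all $A\in\F_s$ and apply it to $A\cap A_{n,r}$, whereas the paper unrolls the same chain of applications of the two parts of Lemma~\ref{key} explicitly — and both arguments rely on the same observation that $\min(1,p_0/4)\ge 1/2$.
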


\begin{proof}
Clearly, it suffices to consider only the case $p\in\N$.
For $n\in\N$ and $t\in [0,1]$
let $A_{n,t} =\{(\eul_{n,t}-\xi)\,(\eul_{n, \utn}-\xi)\leq 0\}$   as in Lemma~\ref{key} and, for $n,p\in\N$, let
\[
a_{n,p} = \EE\Bigl[\Bigl(\int_0^1 \ind_{A_{n,t}}\, dt\Bigr)^p\Bigr].
\]
 We prove by induction on $p$ that for every $p\in\N$ there exists $c\in (0,\infty)$ such that, for all $n\in\N$,
\begin{equation}\label{prop01}
a_{n,p} \le c\, n^{-p/2}.
\end{equation}

Using Lemma~\ref{key}(i) with $s=0$ and $A=\Omega$ as well as estimate~\eqref{tmg002a} in Lemma~\ref{eulprop} we
obtain that there exist $c_1,c_2\in (0,\infty)$ such that, for all $n\in\N$,
\[
a_{n,1} = \int_0^1 \PP(A_{n,t})\, dt  \le \frac{c_1}{\sqrt n}\cdot (1 + \EE[|\eul_{n,1/n}-\xi|^{p_0} ]) \le \frac{c_2}{\sqrt n}.
\]
Thus, ~\eqref{prop01} holds for $p=1$.

Next, let $q\in\N$
and assume that~\eqref{prop01} holds for all $p\in\{1, \ldots, q\}$.
Clearly,
for all $n\in\N$,
\begin{align*}
a_{n,q+1}  &= (q+1)!\cdot \int_0^1\int_{t_1}^1\ldots \int_{t_q}^1 \PP(A_{n,t_1}\cap A_{n,t_2}\cap \ldots \cap A_{n,t_{q+1}})\,dt_{q+1}\, \ldots \,dt_2\, dt_1.
\end{align*}
Note that $p_0\ge \ell_\mu + \ell_\sigma +2$ implies that $\min(1,p_0/4)\ge 1/2$. Hence, by
first applying Lemma~\ref{key}(i) with $A= A_{n,t_1}\cap \ldots \cap A_{n,t_q}$ and $s= t_{q}$, then applying $(q-1)$-times Lemma~\ref{key}(ii) with $A= A_{n,t_1}\cap \ldots \cap A_{n,t_j}$ and $s= t_j$ for $j=q-1,\ldots, 1$, and finally applying Lemma~\ref{key}(ii) with $A=\Omega$ and $s=0$, and observing the estimate~\eqref{tmg002a} in Lemma~\ref{eulprop} we conclude that there exist  $c_1, c_2, c_3\in(0, \infty)$ such that, for all $n\in\N$,
\begin{align*}
a_{n,q+1} & \le  \frac{c_1}{\sqrt n}\cdot \Bigl( a_{n,q}+\int_0^1\ldots \int_{t_{q-1}}^1 \EE\bigl[\ind_{A_{n,t_1}\cap \ldots \cap A_{n,t_q}}\, |\eul_{n,\underline{t_q}_n+1/n}-\xi|^{p_0} ]\,dt_q \, \ldots \, dt_1 \Bigr)\\
&  \le c_2\cdot \Bigl(\frac{a_{n,q}}{\sqrt n}  + \frac{a_{n,q-1}}{n}+\ldots + \frac{a_{n,1} }{n^{q/2}}+ \frac{1}{n^{q/2}}\,\int_0^1  \EE\bigl[\ind_{A_{n,t_1}}\, |\eul_{n,\underline{t_1}_n+1/n} - \xi|^{p_0} \bigr] \, dt_1\Bigr)\\
&  \le c_2\cdot\Bigl(\frac{a_{n,q}}{\sqrt n}  + \frac{a_{n,q-1}}{n}+\ldots + \frac{a_{n,1} }{n^{q/2}}+ \frac{c_3}{n^{(q+1)/2}} \Bigr).
\end{align*}
Employing  the induction hypothesis yields the validity of~\eqref{prop01} for $p=q+1$, which finishes the proof of the proposition.
\end{proof}

\subsection{Proof of Theorem~\ref{Thm1}}\label{proof1}
Let $G$ be given by~\eqref{G} and consider the associated SDE \eqref{sde1} with initial value $G(x_0)$, coefficients $\widetilde\mu$ and $\widetilde\sigma$ given by~\eqref{tildecoeff} and solution $Z$.

For every $n\in\N$ we define a corresponding time-continuous tamed Euler scheme $\eultr_{n}=(\eultr_{n,t})_{t\in[0,1]}$  on $[0,1]$ with step-size $1/n$ by $\eultr_{n,0}=G(x_0)$ and
\begin{equation}\label{eultr}
\eultr_{n,t}=\eultr_{n,i/n}+ \widetilde\mu_n(\eultr_{n,i/n})\cdot (t-i/n)+ \widetilde\sigma_n(\eultr_{n,i/n})\cdot (W_t-W_{i/n})
\end{equation}
for $t\in(i/n,(i+1)/n]$ and $i\in\{0,\ldots,n-1\}$, where
\[
\widetilde\mu_n(x) = \frac{\widetilde\mu(x)}{1+n^{-1/2}|x|^{\ell_\mu}}\quad \text{ and }\quad \widetilde\sigma_n(x) = \frac{\widetilde\sigma(x)}{1+n^{-1/2}|x|^{\ell_\mu}}
\]
for every $x\in\R$.

 Lemma~\ref{transform3} and the Lipschitz continuity of $G^{-1}$ yields the existence of $c\in(0,\infty)$ such that, for all $n\in\N$ and all $t\in[0,1]$,
\begin{equation}\label{qq1}
	|X_t-\widehat X_{n,t}| \le c\cdot 	|Z_t- G(\widehat X_{n,t})|.
\end{equation}

Employing Lemma~\ref{transform1}, the following estimate of the error of $\widehat Z_n$  is a straightforward consequence of~\cite[Theorem 3]{Sabanis2016}.

\begin{theorem}\label{sotirios-aap}\mbox{}
 Let $\mu$ and $\sigma$ satisfy (A1) to (A4) with  $p_0 > 4\ell_\mu+2$ and $p_1>2$.
	Then, for every $p\in (0,p_1)\cap (0,\frac{p_0}{2\ell_\mu +1})$ there exists $c\in (0,\infty)$ such that, for all $n\in\N$,
	\[
		\EE[\|Z_t-\widehat Z_{n,t}\|_\infty^p]^{1/p}	\le  c/\sqrt{n}.	
	\]
	
\end{theorem}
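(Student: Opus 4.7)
The plan is to reduce Theorem~\ref{sotirios-aap} directly to \cite[Theorem 3]{Sabanis2016}, applied to the transformed SDE~\eqref{sde1} and its tamed Euler approximation $\widehat Z_n$ defined in~\eqref{eultr}. The key point is that Lemma~\ref{transform1} ensures that the coefficients $\widetilde\mu, \widetilde\sigma$ of~\eqref{sde1} satisfy (A1), (A2$'$) and (A3) globally on $\R$, with the same exponents $\ell_\mu, \ell_\sigma$ (and the condition $\ell_\sigma \le \ell_\mu/2$) and the same parameters $p_0, p_1$. Since (A2$'$) is nothing but the global version of (A2), these are precisely the hypotheses used in \cite{Sabanis2016}: a coercivity condition, a global one-sided Lipschitz condition with constant involving $p_1$, and polynomial growth of the local Lipschitz constants of drift and diffusion with exponents $\ell_\mu, \ell_\sigma$. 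The taming prescription $1+n^{-1/2}|x|^{\ell_\mu}$ used to define $\widehat Z_n$ is also the one adopted there. Under the standing assumptions $p_0 > 4\ell_\mu+2$ and $p_1>2$, Theorem 3 of \cite{Sabanis2016} then delivers the desired $L_p$-convergence rate $1/2$ for every $p\in (0,p_1)\cap (0,\frac{p_0}{2\ell_\mu+1})$.

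The only delicate point, flagged in Remark~\ref{gap}, is that the proof in \cite{Sabanis2016} invokes a uniform-in-$n$ moment bound of the form $\sup_{n\in\N}\sup_{t\in[0,1]}\EE[|\widehat Z_{n,t}|^{p_0}] < \infty$, whose justification in \cite[Lemma 2]{Sabanis2016} is incomplete for $p_0 \in (2,4)$. This gap is closed by estimate~\eqref{tmg002a} of Lemma~\ref{eulprop}, applied to $\widetilde\mu_n, \widetilde\sigma_n, \widehat Z_n$ in place of $\mu_n,\sigma_n,\widehat X_n$; the hypotheses (A1)--(A3) required for Lemma~\ref{eulprop} hold for $\widetilde\mu, \widetilde\sigma$ by Lemma~\ref{transform1} (since (A2$'$) implies (A2)). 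Once this moment bound is substituted into the argument of \cite{Sabanis2016}, the remainder of that proof goes through verbatim, and no further modifications are necessary.

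In summary, no new machinery is required beyond the global regularity of $\widetilde\mu, \widetilde\sigma$ established in Lemma~\ref{transform1} and the moment estimate of Lemma~\ref{eulprop}. The only real work lies in verifying that the exponent conditions $p_0 > 4\ell_\mu+2$, $p_1 > 2$ and the admissible range of $p$ translate unchanged from the hypotheses of Theorem~\ref{sotirios-aap} to those of \cite[Theorem 3]{Sabanis2016}, which they do immediately. The main obstacle is therefore not conceptual but purely a matter of bookkeeping—making precise the correspondence between the transformed problem and the framework of \cite{Sabanis2016}, and pointing to the appropriate substitute for the defective step in the cited proof.
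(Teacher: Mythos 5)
Your proposal matches the paper's own treatment: the paper proves Theorem~\ref{sotirios-aap} precisely by noting that, via Lemma~\ref{transform1}, the transformed coefficients $\widetilde\mu,\widetilde\sigma$ satisfy the global conditions required by \cite[Theorem 3]{Sabanis2016}, so that result applies directly to $Z$ and $\widehat Z_n$; the paper also flags the same defect in \cite[Lemma 2]{Sabanis2016} in Remark~\ref{gap} and repairs it with the moment bound~\eqref{tmg002a}. The argument is correct and essentially identical to the paper's.
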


For every $n\in\N$ we define a stochastic process $Y_n =(Y_{n,t})_{t\in[0,1]}$ by
\[
Y_{n,t} = G(\widehat X_{n,t}) - \widehat Z_{n,t},\quad t\in[0,1].
\]
Below we show the following two moment estimate for the processes $Y_n$.
\begin{theorem}\label{comp}\mbox{}
 Let $\mu$ and $\sigma$ satisfy (A1) to (A4) with $p_0 > 2(\ell_\mu +  \max(\ell_\mu, 2\ell_\sigma +2)+ 1)$ and $p_1>2$.
		Then, for every $p\in (0,p_1)\cap (0,\frac{p_0}{\ell_\mu +  \max(\ell_\mu, 2\ell_\sigma +2)+ 1})$
		there exists $c\in (0,\infty)$ such that, for all $n\in\N$,
		\[
			\EE[\|Y_{n}\|_\infty^p]^{1/p}	\le  c/\sqrt{n}.	
		\]
\end{theorem}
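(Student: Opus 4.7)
The plan is to derive a stochastic differential for $Y_n$, apply It\^o's formula to $|Y_{n,t}|^p$, use the monotonicity (A2$'$)(i) on the ``principal parts'' of the drift and diffusion of $Y_n$, and control the remaining discretization errors by combining the moment bounds from Lemma~\ref{eulprop} with the sign-change occupation estimate in Proposition~\ref{prop1}. Since $G'$ is Lipschitz and $G''$ is bounded and piecewise Lipschitz (Lemma~\ref{lemx1}), and since on each $[i/n,(i+1)/n]$ the tamed Euler process $\widehat X_n$ is a classical It\^o process with constant coefficients, the It\^o--Tanaka formula (applied via the abs.\ continuity of $G'$ together with the occupation-times formula) gives
\[
dG(\widehat X_{n,t}) = \bigl[G'(\widehat X_{n,t})\mu_n(\widehat X_{n,\utn}) + \tfrac12 G''(\widehat X_{n,t})\sigma_n^2(\widehat X_{n,\utn})\bigr]\,dt + G'(\widehat X_{n,t})\sigma_n(\widehat X_{n,\utn})\,dW_t,
\]
with $G''$ extended to the $\xi_i$ as in~\eqref{xxc}. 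Subtracting~\eqref{eultr} for $\widehat Z_n$ yields $dY_{n,t}=a_t\,dt+b_t\,dW_t$ with explicit integrands.

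Using $\widetilde\mu\circ G = G'\cdot\mu + \tfrac12 G''\cdot\sigma^2$ and $\widetilde\sigma\circ G = G'\cdot\sigma$, I split
\[
a_t = \bigl[\widetilde\mu(G(\widehat X_{n,t}))-\widetilde\mu(\widehat Z_{n,t})\bigr] + R^\mu_t,\qquad
b_t = \bigl[\widetilde\sigma(G(\widehat X_{n,t}))-\widetilde\sigma(\widehat Z_{n,t})\bigr] + R^\sigma_t,
\]
where
\begin{align*}
R^\mu_t &= G'(\widehat X_{n,t})\bigl[\mu_n(\widehat X_{n,\utn})-\mu(\widehat X_{n,t})\bigr] + \tfrac12 G''(\widehat X_{n,t})\bigl[\sigma_n^2(\widehat X_{n,\utn})-\sigma^2(\widehat X_{n,t})\bigr]\\
&\qquad + \bigl[\widetilde\mu(\widehat Z_{n,t}) - \widetilde\mu_n(\widehat Z_{n,\utn})\bigr],\\
R^\sigma_t &= G'(\widehat X_{n,t})\bigl[\sigma_n(\widehat X_{n,\utn}) -\sigma(\widehat X_{n,t})\bigr] + \bigl[\widetilde\sigma(\widehat Z_{n,t}) - \widetilde\sigma_n(\widehat Z_{n,\utn})\bigr].
\end{align*}
For $p\in[2,p_1)$, applying It\^o to $|Y_{n,t}|^p$ together with (A2$'$)(i) and Young's inequality bounds the drift of $|Y_{n,t}|^p$ by $C|Y_{n,t}|^p + C|Y_{n,t}|^{p-1}|R^\mu_t| + C|Y_{n,t}|^{p-2}(R^\sigma_t)^2$, where the constraint $p<p_1$ ensures the $\widetilde\sigma$-cross term is absorbed into $(p_1-1)[\widetilde\sigma(G(\widehat X_{n,t}))-\widetilde\sigma(\widehat Z_{n,t})]^2$; the case $p\in(0,2)$ follows by Jensen.

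All contributions to $R^\sigma_t$, as well as all contributions to $R^\mu_t$ except $\mu(\widehat X_{n,\utn})-\mu(\widehat X_{n,t})$, involve continuous functions of $\widehat X_n$ or $\widehat Z_n$ and are thus standard tamed-Euler / time-discretization errors, bounded in $L_p$ by $c/\sqrt n$ via (A3), (A2$'$)(ii), Lemma~\ref{tamedcoeff1}(v)--(vi), and the increment estimates~\eqref{tmg003b}--\eqref{tmg003} in Lemma~\ref{eulprop}. For the discontinuous term, since (A2)(ii) only holds between consecutive $\xi_i$'s, I will split
\begin{align*}
|\mu(\widehat X_{n,\utn})-\mu(\widehat X_{n,t})| &\le c\bigl(1+|\widehat X_{n,\utn}|^{\ell_\mu}+|\widehat X_{n,t}|^{\ell_\mu}\bigr)|\widehat X_{n,t}-\widehat X_{n,\utn}|\\
&\quad + c\bigl(1+|\widehat X_{n,\utn}|^{\ell_\mu+1}+|\widehat X_{n,t}|^{\ell_\mu+1}\bigr)\sum_{i=1}^k \mathbf 1_{\{(\widehat X_{n,t}-\xi_i)(\widehat X_{n,\utn}-\xi_i)\le 0\}};
\end{align*}
the first summand is $O(n^{-1/2})$ in $L_p$ by H\"older and Lemma~\ref{eulprop}, whereas the second is handled by factoring the sign-change occupation time off via H\"older and invoking Proposition~\ref{prop1} to extract the rate $n^{-1/2}$, the residual polynomial moments of $\widehat X_n$ being finite under the hypothesis $p < p_0/(\ell_\mu + \max(\ell_\mu, 2\ell_\sigma+2) + 1)$.

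Finally, I will take the supremum over $t\in[0,1]$, apply the Burkholder--Davis--Gundy inequality to the martingale part, and close the argument with a Gronwall-type estimate as in~\cite[Lemma~3.2]{gk03b} (already invoked in the proof of~\eqref{tmg003a}), so as to obtain $\EE[\|Y_n\|_\infty^p]^{1/p}\le c/\sqrt n$. The main obstacle is the careful accounting of polynomial-growth factors in the remainders: after H\"older, the sign-change indicator and the terms $|R^\mu_t|$, $(R^\sigma_t)^2$ must be paired with $|Y_{n,t}|^{p-1}$ respectively $|Y_{n,t}|^{p-2}$, and the ensuing moment requirements together with the moment bounds of Lemma~\ref{eulprop} conspire to produce exactly the upper bounds on $p$ in terms of $p_0$ and $p_1$ stated in the theorem.
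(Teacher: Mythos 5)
Your proposal is correct and follows essentially the same route as the paper: transform via $G$, apply It\^o's formula to $|Y_{n,t}|^p$, absorb the principal drift/diffusion differences using (A2$'$) for $\widetilde\mu,\widetilde\sigma$, control the discontinuous remainder through the sign-change occupation estimate of Proposition~\ref{prop1}, and close with the Gronwall-type lemma of~\cite{gk03b}. The only (inessential) difference is where the discontinuity is parked: the paper exploits the continuity of $\widetilde\mu\circ G=G'\mu+\tfrac12 G''\sigma^2$ so that the sign-change argument is applied to the increment of $G''$ (the term $A_{2,n,s}$), whereas you apply it directly to the increment $\mu(\widehat X_{n,\utn})-\mu(\widehat X_{n,t})$; both reduce to the same proposition with comparable polynomial weights.
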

Combining~\eqref{qq1},
Theorem~\ref{sotirios-aap} and Theorem~\ref{comp} yields Theorem~\ref{Thm1}.

It remains to prove Theorem~\ref{comp}. To this end we first note that, by Lemma~\ref{transform1}, the estimates in Lemma~\ref{solprop}, Lemma~\ref{tamedcoeff1}, and Lemma~\ref{eulprop} as well as~\eqref{bbb2} do also hold for the process $Z$, the tamed coefficients $\widetilde \mu_n$, $\widetilde \sigma_n$ and the tamed Euler scheme $\widehat Z_n$, respectively.

Since $G'$ is absolutely continuous, see Lemma~\ref{lemx1}(iii) and \eqref{G}, we may apply the It\^{o} formula, see e.g.~\cite[Problem 3.7.3]{ks91}, to obtain that $\PP\text{-a.s.}$ for all $t\in[0,1]$,
\begin{align*}
	G(\eul_{n,t}) & = G(x_0)+\int_0^t(G'(\eul_{n,s})\cdot \mu_n(\eul_{n,\usn})+\tfrac{1}{2}G''(\eul_{n,s})\cdot\sigma_n^2(\eul_{n,\usn}))\,ds\\
	& \qquad +\int_0^t G'(\eul_{n,s})\cdot\sigma_n(\eul_{n,\usn})\,dW_s.
\end{align*}
It follows that for all $n\in\N$ we have $\PP\text{-a.s.}$ for all $t\in[0,1]$,
\begin{equation}\label{eq:diff_G_and_Z}
	\begin{aligned}
	Y_{n,t}= &
	\int_0^t  \bigl(G'(\eul_{n,s})\cdot  \mu_n(\eul_{n,\usn}) - \widetilde \mu_n(\eultr_{n,\usn})  +\tfrac{1}{2}G''(\eul_{n,s})\cdot\sigma_n^2(\eul_{n,\usn})\bigr)\,ds\\
		& \qquad +\int_0^t \bigl(G'(\eul_{n,s})\cdot\sigma_n(\eul_{n,\usn}) -\widetilde \sigma_n(\eultr_{n,\usn})\bigr) \,dW_s.
		\end{aligned}
\end{equation}

For  $\alpha\in (0,\infty)$ and  $q\in [2, \infty)$ we define
\[
V_{\alpha,q} \colon [0,\infty) \times \R \to \R, \, (t,y) \mapsto \exp(-\alpha\cdot t)\cdot |x|^q.
\]
Moreover, we put
\[
\kappa = \ell_\mu + \max(\ell_\mu, 2\ell_\sigma +2)+ 1
\]
and we fix
\begin{equation}\label{parp}
p\in [2,p_1)\cap [2,\frac{p_0}{\kappa}].
\end{equation}
By the It\^{o} formula, for $\alpha\in(0, \infty)$,
all $n\in\N$ and all $t\in[0,1]$,
\begin{align*}
	V_{\alpha,p}(t,Y_{n,t})= & \int_0^t \Bigl(-\alpha V_{\alpha,p}(s,Y_{n,s}) + p Y_{n,s} V_{\alpha,p-2}(s,Y_{n,s})\cdot \bigl(G'(\eul_{n,s})\cdot  \mu_n(\eul_{n,\usn}) - \widetilde \mu_n(\eultr_{n,\usn}) \\& \qquad\qquad \qquad +\tfrac{1}{2}G''(\eul_{n,s})\cdot\sigma_n^2(\eul_{n,\usn})\bigr) \Bigr)\, ds \nonumber\\
	& + \frac{p(p-1)}{2} \int_0^t V_{\alpha,p-2}(s,Y_{n,s})\bigl( G'(\eul_{n,s})\cdot\sigma_n(\eul_{n,\usn}) -\widetilde \sigma_n(\eultr_{n,\usn})  \bigr)^2\, ds + M_{n,\alpha,t},
\end{align*}
where
\[
M_{n,\alpha,t}= p\int_0^t  Y_{n,s} V_{\alpha,p-2}(s,Y_{n,s})\cdot\bigl( G'(\eul_{n,s})\cdot\sigma_n(\eul_{n,\usn}) -\widetilde \sigma_n(\eultr_{n,\usn})  \bigr) \,dW_s.
\]
Using the fact that $G$ is Lipschitz continuous and  $G'$ is bounded as well as
Lemma~\ref{tamedcoeff1}(i)  and \eqref{tmg2} we obtain that there exist $c_1,c_2\in (0,\infty)$ such that, for all $\alpha\in(0, \infty)$,
all $n\in\N$ and all $s\in[0,1]$,
\begin{align*}
	&	 Y^2_{n,s} V^2_{\alpha, p-2}(s,Y_{n,s})\cdot |G'(\widehat X_{n,s})\cdot\sigma_n(\eul_{n,\usn}) -\widetilde \sigma_n(\eultr_{n,\usn})|^2 \\
	& \qquad  \le  c_1\cdot (1+|\widehat X_{n,s}|^{2p}  + |\eultr_{n,s})|^{2p})\cdot\bigl(1+|\eul_{n,\usn}|^{2\ell_\sigma + 2} +  |\eultr_{n,\usn}|^{2\ell_\sigma +2}\bigr) \\
	& \qquad  \le  c_2\cdot (1+\sup_{t\in[0,1]}|\widehat X_{n,t}|^{2p+2\ell_\sigma + 2}  + \sup_{t\in[0,1]}|\eultr_{n,t}|^{2p+2\ell_\sigma + 2}).
\end{align*}
Employing \eqref{bbb2} we therefore conclude that there exists $c\in (0,\infty)$ such that, for all  $\alpha\in(0, \infty)$ and all $n\in\N$,
\begin{align*}
&	\EE\bigl[\sup_{s\in[0,1]} Y^2_{n,s} V^2_{\alpha, p-2}(s,Y_{n,s})\cdot |G'(\widehat X_{n,s})\cdot\sigma_n(\eul_{n,\usn}) -\widetilde \sigma_n(\eultr_{n,\usn})|^2\bigr] \\
& \qquad \qquad \le  c\cdot	\bigl( 1 + \EE\bigl[\sup_{s\in[0,1]} |\widehat X_{n,s}|^{2p+2\ell_\sigma + 2}\bigr]  + \EE\bigl[\sup_{s\in[0,1]} |\eultr_{n,s})|^{2p+2\ell_\sigma + 2}\bigr]\bigr) < \infty.
\end{align*}
Hence, for  all $\alpha\in(0, \infty)$ and all $n\in\N$,   the stochastic process $(M_{\alpha,n,t})_{t\in[0,1]}$ is
 a martingale. Thus, for all $\alpha\in(0, \infty)$, all $n\in\N$ and all stopping times $\tau$ with $\tau\le 1$,
\begin{equation}\label{stopp1}
	\begin{aligned}
		& \EE\bigl[ V_{\alpha,p}(\tau,Y_{n,\tau})\bigr] \\
		&\qquad = \EE\Bigl[ \int_0^\tau \Bigl( p Y_{n,s} V_{\alpha,p-2}(s,Y_{n,s})\cdot \bigl(\widetilde \mu(G(\widehat X_{n,s})) - \widetilde \mu(\eultr_{n,s})+ A_{1,n,s}+A_{2,n,s}\bigr) \\
				& \qquad\quad  -\alpha V_{\alpha,p}(s,Y_{n,s}) + \frac{p(p-1)}{2}   V_{\alpha,p-2}(s,Y_{n,s})\bigl( \widetilde \sigma(G(\widehat X_{n,s})) - \widetilde \sigma(\eultr_{n,s})+ B_{n,s}\bigr)^2\, ds \Bigr) \Bigr],
	\end{aligned}
\end{equation}
where
	\begin{align*}
A_{1,n,s} & = G'(\eul_{n,s})\cdot  \mu_n(\eul_{n,\usn}) - \widetilde \mu_n(\eultr_{n,\usn})+\tfrac{1}{2}G''(\eul_{n,\usn})\cdot\sigma_n^2(\eul_{n,\usn}) - \bigl(\widetilde \mu(G(\widehat X_{n,s})) - \widetilde \mu(\eultr_{n,s})\bigr),\\
A_{2,n,s} & = \tfrac{1}{2}\sigma_n^2(\eul_{n,\usn})\cdot (G''(\eul_{n,s}) - G''(\eul_{n,\usn})),\\
B_{n,s}	& =  G'(\eul_{n,s})\cdot\sigma_n(\eul_{n,\usn}) -\widetilde \sigma_n(\eultr_{n,\usn}) - \bigl( \widetilde \sigma(G(\widehat X_{n,s})) - \widetilde \sigma(\eultr_{n,s})\bigr).
	\end{align*}
Since $p_1>2$, we can choose $\beta>0$ such that $(p-1)(1+\beta)\le p_1-1$.
By \eqref{squares},
for all $n\in\N$ and all $s\in[0,1]$,
\[
\bigl( \widetilde \sigma(G(\widehat X_{n,s})) - \widetilde \sigma(\eultr_{n,s})+ B_{n,s}\bigr)^2\le (1+\beta) \bigl( \widetilde \sigma(G(\widehat X_{n,s})) - \widetilde \sigma(\eultr_{n,s})\bigr)^2 + (1+1/\beta)B_{n,s}^2.
\]
Moreover, by Young's inequality,   for all $\alpha\in(0, \infty)$,
all $n\in\N$ and all $s\in[0,1]$,
\[
 V_{\alpha,p-2}(s,Y_{n,s})\cdot (A_{1,n,s}^2 + B_{n,s}^2) \le 2 \frac{p-2}{p} V_{\alpha,p}(s,Y_{n,s}) + \frac{2}{p} \exp(-\alpha\cdot s) (|A_{1,n,s}|^p+ |B_{n,s}|^p).
\]
Using Lemma~\ref{transform1}
we therefore obtain that there exist $c_1,c_2,c_3\in (0,\infty)$ such that, for all $\alpha\in(0, \infty)$,
all $n\in\N$ and all $s\in[0,1]$,
\begin{equation}\label{stopp2}
	\begin{aligned}
& p Y_{n,s} V_{\alpha,p-2}(s,Y_{n,s})\cdot \bigl(\widetilde \mu(G(\widehat X_{n,s})) - \widetilde \mu(\eultr_{n,s})+ A_{1,n,s} + A_{2,n,s}\bigr) \\
& \qquad\qquad \qquad  -\alpha V_{\alpha,p}(s,Y_{n,s}) + \frac{p(p-1)}{2}   V_{\alpha,p-2}(s,Y_{n,s})\bigl( \widetilde \sigma(G(\widehat X_{n,s})) - \widetilde \sigma(\eultr_{n,s})+ B_{n,s}\bigr)^2	\\
& \qquad \le  \frac{p}{2}  V_{\alpha,p-2}(s,Y_{n,s})\cdot \bigl(2Y_{n,s}\cdot (\widetilde \mu(G(\widehat X_{n,s})) - \widetilde \mu(\eultr_{n,s}))\\
		& \qquad\qquad\qquad\qquad \qquad\qquad \qquad +(p_1-1) \cdot ( \widetilde \sigma(G(\widehat X_{n,s})) - \widetilde \sigma(\eultr_{n,s}))^2\bigr)\\
				& \qquad\qquad + p Y_{n,s} V_{\alpha,p-2}(s,Y_{n,s})\cdot  A_{2,n,s} + 2p V_{\alpha,p-2}(s,Y_{n,s})\cdot ( Y^2_{n,s} +  A^2_{1,n,s})\\
				& \qquad\qquad\qquad -\alpha V_{\alpha,p}(s,Y_{n,s}) + \frac{p(p-1)}{2}    V_{\alpha,p-2}(s,Y_{n,s})\cdot (1+1/\beta)\cdot B_{n,s}^2\\
				& \qquad \le  \frac{p}{2}  V_{\alpha,p-2}(s,Y_{n,s})\cdot c_1 Y_{n,s}^2+p V_{\alpha,p-1}(s,Y_{n,s})\cdot  |A_{2,n,s}| + 2p V_{\alpha,p-2}(s,Y_{n,s})\cdot Y_{n,s}^2 	\\
					& \qquad\qquad -\alpha V_{\alpha,p}(s,Y_{n,s})	+ c_2  \cdot V_{\alpha,p-2}(s,Y_{n,s})\cdot (A_{1,n,s}^2 + B_{n,s}^2)\\
							& \qquad \le   V_{\alpha,p}(s,Y_{n,s})\cdot\Bigl( -\alpha + \frac{c_1 p}{2} +2p+  \frac{2c_2(p-2)}{p}\Bigr)	+p  V_{\alpha,p-1}(s,Y_{n,s})\cdot  |A_{2,n,s}| \\
							& \qquad\qquad\qquad	+ \frac{2c_2}{p} (|A_{1,n,s}|^p+|B_{n,s}|^p).
	\end{aligned}
\end{equation}

Choose $\alpha > \frac{c_1 p}{2} +2p+  \frac{2c_2(p-2)}{p}$. We conclude from~\eqref{stopp1} and~\eqref{stopp2} that there exists $c\in (0,\infty)$ such that, for all $n\in\N$ and all stopping times $\tau$ with $\tau\le 1$,
 \begin{equation}\label{stopp3}
 		 \EE\bigl[ V_{\alpha,p}(\tau,Y_{n,\tau})\bigr] \le c \cdot \EE\Bigl[ \int_0^\tau \bigl(V_{\alpha,p-1}(s,Y_{n,s})\cdot  |A_{2,n,s}|+ |A_{1,n,s}|^p+|B_{n,s}|^p  \bigr)\, ds \Bigr].
 \end{equation}
Below we show that
there exists $c\in (0,\infty)$ such that for all $n\in\N$ and all $s\in [0,1]$,
 \begin{equation}\label{ana5}
	\EE\bigl[	|A_{1,n,s}|^p +	|B_{n,s}|^p\bigr]
		 \le\frac{c}{n^{p/2}},
\end{equation}	
and there exists $c\in (0,\infty)$ such that for all $n\in\N$,
\begin{equation}\label{eft3}
 \EE\Bigl[ \Bigl|	\int_0^1 |A_{2,n,s}|\, ds\Bigr|^p\Bigr]\le  \frac{c}{n^{p/2} }.
\end{equation}

Employing~\cite[Lemma 3.2]{gk03b} as well as~\eqref{ana5}, ~\eqref{eft3} and the Young inequality we derive from~\eqref{stopp3} that for all $\gamma\in (0,1)$ there exist $c_1,c_2,c_3, c_4 \in (0,\infty)$ such that, for all $n\in\N$,
\begin{equation}\label{etf4a}
\begin{aligned}
&	\EE\bigl[\sup_{s\in[0,1]}\exp(-\alpha s\gamma) |Y_{n,s}|^{p\gamma} \bigr] \\
& \qquad    \le 	c_1 \cdot\frac{2-\gamma}{1-\gamma}\cdot
 \EE\Bigl[ \Bigl( \int_0^1 \bigl(\exp(-\alpha s) |Y_{n,s}|^{p-1}\cdot  |A_{2,n,s}|+ |A_{1,n,s}|^p+|B_{n,s}|^p\bigr) \, ds \Bigr)^\gamma \Bigr]\\
 & \qquad   \le
 \EE\Bigl[ \bigl( \sup_{s\in[0,1]}\exp(-\alpha s\gamma) |Y_{n,s}|^{(p-1)\gamma}\bigr)\cdot  \Bigl( c_2\cdot \int_0^1   |A_{2,n,s}|\, ds \Bigr)^\gamma\Bigr]\\
 & \qquad\qquad\qquad+c_2\cdot \EE\Bigl[\Bigl( \int_0^1   ( |A_{1,n,s}|^p+|B_{n,s}|^p )\, ds \Bigr)^\gamma \Bigr]\\
  & \qquad  \le
 \EE\Bigl[ \bigl( \sup_{s\in[0,1]}\exp(-\alpha s\gamma (p-1)/p) |Y_{n,s}|^{(p-1)\gamma}\bigr)\cdot \Bigl( c_2\cdot\int_0^1   |A_{2,n,s}|\, ds \Bigr)^\gamma\Bigr] +  \frac{c_3}{n^{p\gamma/2}}\\
  & \qquad   \le
\frac{p-1}{p}\cdot \EE\Bigl[  \sup_{s\in[0,1]}\exp(-\alpha s\gamma) |Y_{n,s}|^{p\gamma}\Bigr]  + \frac{1}{p}\cdot \EE\Bigl[\Bigl( c_2\cdot\int_0^1   |A_{2,n,s}|\, ds \Bigr)^{\gamma p}\Bigr] + \frac{c_3 }{n^{p\gamma/2}}\\
 & \qquad   \le
\frac{p-1}{p}\cdot \EE\Bigl[  \sup_{s\in[0,1]}\exp(-\alpha s\gamma) |Y_{n,s}|^{p\gamma}\Bigr]  + \frac{c_4 }{n^{p\gamma/2}}.	
\end{aligned}
\end{equation}
Note that by~\eqref{bbb2} and the Lipschitz continuity of $G$, for all $\gamma\in(0,1)$ and all $n\in\N$,
\[
\EE\bigl[\sup_{s\in[0,1]}\exp(-\alpha s\gamma) |Y_{n,s}|^{p\gamma} \bigr] \leq \EE\bigl[\sup_{s\in[0,1]} |Y_{n,s}|^{p\gamma} \bigr]<\infty.
\]
Consequently, we obtain from~\eqref{etf4a} that for all $\gamma \in(0,1)$ there exists $c\in (0,\infty)$ such that, for all $n\in\N$,
\[
\EE[\sup_{s\in[0,1]} |Y_{n,s}|^{p\gamma} ]  \le \exp(\alpha\gamma)\cdot  \EE\Bigl[  \sup_{s\in[0,1]}\exp(-\alpha s\gamma) |Y_{n,s}|^{p\gamma}\Bigr] \le   \frac{c}{n^{p\gamma/2}},
\]
which implies the statement of Theorem \ref{comp}.

It remains to prove  the estimates \eqref{ana5} and \eqref{eft3}.
We first prove \eqref{ana5}.
Using the definition \eqref{tildecoeff} of $\widetilde \mu$ and $\widetilde\sigma$ we obtain that for all $n\in\N$ and all $s\in [0,1]$,
\begin{align*}
	\begin{aligned}
	A_{1,n,s} & = \bigl(G'(\eul_{n,s})- G'(\eul_{n,\usn})\cdot  \mu_n(\eul_{n,\usn}) + (G'\cdot  \mu_n+\tfrac{1}{2}G''\cdot\sigma_n^2)(\eul_{n,\usn}) - \widetilde \mu(G(\widehat X_{n,s})) \\
			& \qquad +(	\widetilde \mu(\eultr_{n,s}) -\widetilde \mu(\eultr_{n,\usn}) ) + (	\widetilde \mu(\eultr_{n,\usn}) - \widetilde \mu_n(\eultr_{n,\usn}))\\
		& = \bigl(G'(\eul_{n,s})- G'(\eul_{n,\usn})\cdot  \mu_n(\eul_{n,\usn}) +  (\widetilde \mu(G(\widehat X_{n,\usn})) - \widetilde \mu(G(\widehat X_{n,s})))\\
		& \qquad + G'(\widehat X_{n,\usn})\cdot(  \mu_n   - \mu)(\widehat X_{n,\usn}) +\tfrac{1}{2}G''(\widehat X_{n,\usn})\cdot     (\sigma_n^2- \sigma^2)(\eul_{n,\usn})  \\
		& \qquad +(	\widetilde \mu(\eultr_{n,s}) -\widetilde \mu(\eultr_{n,\usn}) ) + (	\widetilde \mu- \widetilde \mu_n)(\eultr_{n,\usn})
	\end{aligned}
\end{align*}
and
\begin{align*}
	\begin{aligned}
		B_{n,s}
		& = \bigl(G'(\eul_{n,s})- G'(\eul_{n,\usn})\cdot  \sigma_n(\eul_{n,\usn}) +  (\widetilde \sigma(G(\widehat X_{n,\usn})) - \widetilde \sigma(G(\widehat X_{n,s})))\\
		& \qquad + G'(\widehat X_{n,\usn})\cdot(  \sigma_n   - \sigma)(\widehat X_{n,\usn})  \\
		& \qquad +(	\widetilde \sigma(\eultr_{n,s}) -\widetilde \sigma(\eultr_{n,\usn}) ) + (	\widetilde \sigma- \widetilde \sigma_n)(\eultr_{n,\usn})).	
	\end{aligned}
\end{align*}
Employing the Lipschitz continuity of $G$ and $G'$, the boundedness of $G'$
and $G''$,
the fact that $\widetilde\mu$ satisfies (A2') and $\widetilde\sigma$  satisfies (A3),
\eqref{tmg2}, \eqref{tmg3} and Lemma~\ref{tamedcoeff1}(i),(v),(vi) we thus conclude that there exist $c_1,c_2,c_3,c_4\in (0,\infty)$ such that, for all $n\in\N$ and all $s\in [0,1]$,
\begin{equation}\label{ana3}
	\begin{aligned}
		|A_{1,n,s}| & \le c_1 \cdot \Bigl(|\eul_{n,s}- \eul_{n,\usn}| \cdot (1+ |\eul_{n,\usn}|^{\ell_\mu+1})\\
		&\qquad\qquad + (1+ |G(\eul_{n,\usn})|^{\ell_\mu}  + |G(\eul_{n,s})|^{\ell_\mu} )\cdot   |G(\eul_{n,s})- G(\eul_{n,\usn})| \\
				& \qquad\qquad + \frac{1}{\sqrt{n}}\cdot( 1+ |\widehat X_{n,\usn}|^{2\ell_\mu+1})  + \frac{1}{\sqrt{n}} \cdot( 1+ |\widehat X_{n,\usn}|^{\ell_\mu+2\ell_\sigma+2}) \\
					& \qquad\qquad +  (1+ |\eultr_{n,\usn}|^{\ell_\mu}  + |\eultr_{n,s}|^{\ell_\mu} )\cdot   |\eultr_{n,s}- \eultr_{n,\usn}|
		+ \frac{1}{\sqrt{n}} \cdot( 1+ |\widehat Z_{n,\usn}|^{2\ell_\mu+1})\Bigr)\\
			& \le c_2\cdot \Bigl((1+ |\eul_{n,\usn}|^{\ell_\mu+1}+ |\eul_{n,s}|^{\ell_\mu+1}   ) \cdot |\eul_{n,s}- \eul_{n,\usn}|  \\
		& \qquad\qquad +	\frac{1}{\sqrt{n}} \cdot( 1+ |\widehat X_{n,\usn}|^{\kappa } + |\widehat Z_{n,\usn}|^{\kappa} ) \\
		& \qquad\qquad +  (1+ |\eultr_{n,\usn}|^{\ell_\mu}  + |\eultr_{n,s}|^{\ell_\mu} )\cdot   |\eultr_{n,s}- \eultr_{n,\usn}| \Bigr),
	\end{aligned}
\end{equation}
and, similarly,
\begin{equation}\label{ana4}
	\begin{aligned}
		|B_{n,s}| & \le c_3 \cdot\Bigl(|\eul_{n,s}- \eul_{n,\usn}| \cdot (1+ |\eul_{n,\usn}|^{\ell_\sigma+1})\\
		&\qquad\qquad + (1+ |G(\eul_{n,\usn})|^{\ell_\sigma}  + |G(\eul_{n,s})|^{\ell_\sigma} )\cdot   |G(\eul_{n,s})- G(\eul_{n,\usn})| \\
		& \qquad\qquad + \frac{1}{\sqrt{n}} \cdot( 1+ |\widehat X_{n,\usn}|^{\ell_\mu + \ell_\sigma+1}) +  (1+ |\eultr_{n,\usn}|^{\ell_\sigma}  + |\eultr_{n,s}|^{\ell_\sigma} )\cdot   |\eultr_{n,s}- \eultr_{n,\usn}|\\
			& \qquad\qquad
		+ \frac{1}{\sqrt{n}} \cdot( 1+ |\widehat Z_{n,\usn}|^{\ell_\mu+\ell_\sigma + 1})\Bigr)\\
		& \le c_4\cdot\Bigl((1+ |\eul_{n,\usn}|^{\ell_\sigma+1}+ |\eul_{n,s}|^{\ell_\sigma+1}   ) \cdot |\eul_{n,s}- \eul_{n,\usn}|  \\
		& \qquad\qquad +	\frac{1}{\sqrt{n}} \cdot( 1+ |\widehat X_{n,\usn}|^{\kappa} + |\widehat Z_{n,\usn}|^{\kappa} ) \\
		& \qquad\qquad +  (1+ |\eultr_{n,\usn}|^{\ell_\sigma}  + |\eultr_{n,s}|^{\ell_\sigma} )\cdot   |\eultr_{n,s}- \eultr_{n,\usn}|\Bigr).
	\end{aligned}
\end{equation}
 Combining~\eqref{ana3} with~\eqref{ana4} and observing that $\ell_\sigma \le \ell_\mu/2$ yields  that there exists $c\in (0,\infty)$ such that for all $n\in\N$ and all $s\in [0,1]$,
 \begin{align*}
 	\begin{aligned}
	|A_{1,n,s}|^p +	|B_{n,s}|^p  & \le
		c\cdot\Bigl((1+ |\eul_{n,\usn}|^{\ell_\mu+1}+ |\eul_{n,s}|^{\ell_\mu+1}   )^p \cdot |\eul_{n,s}- \eul_{n,\usn}|^p \\
			& \qquad\qquad +	\frac{1}{n^{p/2}} \cdot( 1+ |\widehat X_{n,\usn}|^{\kappa} + |\widehat Z_{n,\usn}|^{\kappa} )^p \\
				& \qquad\qquad +  (1+ |\eultr_{n,\usn})|^{\ell_\mu}  + |\eultr_{n,s})|^{\ell_\mu} )^p\cdot   |\eultr_{n,s}- \eultr_{n,\usn}| ^p\Bigr).
 	\end{aligned}
\end{align*}		
Hence, by the fact that
\[
p(\ell_\mu +\ell_\sigma +1) < p(\ell_\mu +\ell_\sigma +2)\le p\kappa \le p_0
\]
and the
estimates~\eqref{tmg002a} and~\eqref{tmg003} in Lemma~\ref{eulprop} we conclude that there exist $c_1,c_2\in (0,\infty)$ such that, for all $n\in\N$ and all $s\in [0,1]$,
\begin{align*}
&	\EE\bigl[	|A_{1,n,s}|^p +	|B_{n,s}|^p\bigr] \\
		&\quad \le c_1 \cdot\Bigl( \bigl(1+\sup_{t\in[0,1]}\EE\bigl[  |\eul_{n,t}|^{p(\ell_\mu+\ell_\sigma+2)}  \bigr]^{\frac{\ell_\mu+1}{\ell_\mu+\ell_\sigma+2}}\bigr)\cdot \sup_{t\in[0,1]}\EE\bigl[  |\eul_{n,t}- \eul_{n,\utn}|^{p\frac{\ell_\mu+\ell_\sigma+2}{\ell_\sigma+1}}\bigr]^{\frac{\ell_\sigma+1}{\ell_\sigma+\ell_\sigma+2}}\\
		& \qquad\quad +	\frac{1}{n^{p/2}} \cdot \bigl(1+\sup_{t\in[0,1]}\EE\bigl[  |\eul_{n,t}|^{p\kappa}\bigr] + \sup_{t\in[0,1]}\EE\bigl[  |\eultr_{n,t}|^{p\kappa}\bigr] \bigr)\\
			&\qquad \quad +  \bigl(1+\sup_{t\in[0,1]}\EE\bigl[  |\eultr_{n,t}|^{p(\ell_\mu+\ell_\sigma+1)}  \bigr]^{\frac{\ell_\mu}{\ell_\mu+\ell_\sigma+1}}\bigr)\cdot \sup_{t\in[0,1]}\EE\bigl[  |\eultr_{n,t}- \eultr_{n,\utn}|^{p\frac{\ell_\mu+\ell_\sigma+1}{\ell_\sigma+1}}\bigr]^{\frac{\ell_\sigma+1}{\ell_\sigma+\ell_\sigma+1}}\Bigr)\\
			&\quad \le\frac{c_2}{n^{p/2}},
\end{align*}	
which finishes the proof of \eqref{ana5}.

Next, we prove \eqref{eft3}.
Put
\[
B= \Bigl(\bigcup_{i=1}^{k+1} (\xi_{i-1}, \xi_{i})^2\Bigr)^c
\]
and note that
$B=\bigcup_{i=1}^k \{(x,y)\in\R^2:  (x-\xi_{i})\cdot(y-\xi_{i})\leq 0\}$.
Using Lemma~\ref{tamedcoeff1}(i), ~\eqref{tmg2} and Lemma~\ref{lemx1}(iv) we
obtain that there exists
$ c\in(0, \infty)$ such that, for all $n\in\N$ and all  $x,y\in\R$,
\begin{align*}
	&|\sigma_n^2(y)\cdot (G''(x)-G''(y))|\leq \begin{cases}
		c\cdot (1+|y|^{2\ell_\sigma+2})\cdot|x-y|, &\text{ if } (x,y)\in B^c, \\
		c\cdot (1+|y|^{2\ell_\sigma+2}),&\text{ if } (x,y)\in B.
	\end{cases}
\end{align*}
Hence there exists
$ c\in(0, \infty)$ such that, for all $n\in\N$,
\begin{equation}\label{end1}
	\begin{aligned}
	\int_0^1 |A_{2,n,s}|\, ds
		& \leq c\cdot \Bigl(\int_0^1(1+|\eul_{n,\usn}|^{2\ell_\sigma+2})\cdot |\eul_{n,s}-\eul_{n,\usn} |\, ds \\
		& \qquad\qquad +   \int_0^1(1+|\eul_{n,\usn}|^{2\ell_\sigma+2})\cdot 1_{\{(\eul_{n,s}, \eul_{n,\usn})\in B\}}\, ds\Bigr)\\
			& \leq c\cdot \Bigl(\int_0^1(1+|\eul_{n,\usn}|^{2\ell_\sigma+2})\cdot |\eul_{n,s}-\eul_{n,\usn} |\, ds \\
		& \qquad\qquad +  \bigl(1+\sup_{s\in [0,1]}|\eul_{n,s}|^{2\ell_\sigma+2}\bigr)\cdot  \int_0^11_{\{(\eul_{n,s}, \eul_{n,\usn})\in B\}}\, ds\Bigr).
	\end{aligned}
\end{equation}
Observing that $2\ell_\sigma\le \ell_\mu$ and
\[
 p(\ell_\mu + \ell_\sigma +3) \le p\kappa \le p_0
\]
 and using
the estimates ~\eqref{tmg002a} and~\eqref{tmg003} in Lemma \ref{eulprop}
we obtain that there exist $c_1,c_2, c_3, c_4\in (0,\infty)$ such that, for all $n\in\N$ and all $s\in[0,1]$,
\begin{align*}
 & \EE \bigl[\bigl((1+|\eul_{n,\usn}|^{2\ell_\sigma+2})\cdot |\eul_{n,s}-\eul_{n,\usn} |	\bigr)^p\bigr]\\	&\qquad  \le c_1\cdot \EE \bigl[(1+|\eul_{n,\usn}|^{p(2\ell_\sigma+2)})\cdot |\eul_{n,s}-\eul_{n,\usn} |^p\bigr] \\
 &\qquad  \le c_2\cdot \EE\bigl[\bigl(1+|\eul_{n,\usn}|^{p(2\ell_\sigma+2)\frac{\ell_\mu+\ell_\sigma+3}{\ell_\mu+\ell_\sigma+2}}\bigr)
 	\bigr]^{\frac{\ell_\mu+\ell_\sigma+2}{\ell_\mu+\ell_\sigma+3}}\cdot
 	 \EE \bigl[ |\eul_{n,s}-\eul_{n,\usn} |^{p\frac{\ell_\mu+\ell_\sigma+3}{\ell_\sigma+1}}\bigr]^{\frac{\ell_\sigma+1}{\ell_\mu+\ell_\sigma+3}}\\
 &\qquad  \le c_3\cdot \Bigl(1+\sup_{t\in[0,1]}\EE\bigl[|\eul_{n,\usn}|^{p_0}\bigr]^{\frac{\ell_\mu+\ell_\sigma+2}{\ell_\mu+\ell_\sigma+3}}\Bigr)\cdot \frac{1}{n^{p/2} } \le  \frac{c_4}{n^{p/2} },
\end{align*}
which yields the existence of $c\in (0,\infty)$ such that, for all $n\in\N$,
\begin{equation}\label{eft1}
	\EE\Bigl[ \Bigl|\int_0^1(1+|\eul_{n,\usn}|^{2\ell_\sigma+2})\cdot |\eul_{n,s}-\eul_{n,\usn} |\, ds\Bigr|^p\Bigr] \le \frac{c}{n^{p/2} }.
	\end{equation}
Observing that
\[
p(\ell_\mu + 2\ell_\sigma +2) < p_0
\]
and using the estimate ~\eqref{tmg003a} in Lemma~\ref{eulprop}  and Proposition~\ref{prop1}   we derive that there exist $c_1,c_2\in (0,\infty)$ such that, for all $n\in\N$,
\begin{equation}\label{eft2}
	\begin{aligned}
& \EE\Bigl[ \Bigl|(1+\sup_{s\in [0,1]}|\eul_{n,s}|^{2\ell_\sigma+2})\cdot  \int_0^1 1_{\{(\eul_{n,s}, \eul_{n,\usn})\in B\}}\, ds\Bigr|^p\Bigr]\\
& \qquad \le \Bigl(1+\EE\bigl[\sup_{s\in [0,1]}|\eul_{n,s}|^{p(\ell_\mu +2\ell_\sigma+2}\bigr]^\frac{2\ell_\sigma+2}{\ell_\mu+2\ell_\sigma+2}\Bigr) \\
& \qquad\qquad\qquad\cdot \EE\Bigl[\Bigr| \int_0^1 1_{\{(\eul_{n,s}, \eul_{n,\usn})\in B\}}\, ds\Bigr|^{p\frac{\ell_\mu+2\ell_\sigma+2}{\ell_\mu}}\Bigr]^{\frac{\ell_\mu}{\ell_\mu+2\ell_\sigma+2}} \\
& \qquad \le c_1\cdot \sum_{i=1}^k \EE\Bigl[\Bigl|\int_0^1\ind_{\{(\eul_{n,s}-\xi_i)\cdot (\eul_{n,\usn}-\xi_i)\leq 0\}}\, ds\Bigr|^{p\frac{\ell_\mu+2\ell_\sigma+2}{\ell_\mu}}\Bigr]^{\frac{\ell_\mu}{\ell_\mu+2\ell_\sigma+2}} \le  \frac{c_2}{n^{p/2} }.
	\end{aligned}
\end{equation}
Combining~\eqref{end1},~\eqref{eft1} and~\eqref{eft2} we conclude that there exists $c\in (0,\infty)$ such that, for all $n\in\N$,
\[
 \EE\Bigl[ \Bigl|	\int_0^1 |A_{2,n,s}|\, ds\Bigr|^p\Bigr]\le  \frac{c}{n^{p/2} },
\]
which finishes the proof of \eqref{eft3} and completes the proof of Theorem~\ref{comp}.

\subsection{Proof of Theorem~\ref{Thm2}}\label{4.5}
Clearly, for all $p\in [1,\infty)$, all $q\in [1,\infty]$ and all $n\in\N$,
\begin{equation}\label{two0}
\begin{aligned}
\EE\bigl[\|X-\overline X_n\|_q^p\bigr]^{1/p}
& \le \EE\bigl[\|X-\eul_n\|_q^p\bigr]^{1/p} +\EE\bigl[\|\eul_n-\overline X_n\|_q^p\bigr]^{1/p}\\
& \le \EE\bigl[\|X-\eul_n\|_\infty^p\bigr]^{1/p} +\EE\bigl[\|\eul_n-\overline X_n\|_q^p\bigr]^{1/p}.
\end{aligned}
\end{equation}

For
$n\in\N$
define a stochastic process
$\overline W_n = (\overline W_{n,t})_{t\in[0,1]}$ by
\[
\overline W_{n,t} = (n\cdot t-i)\cdot W_{n,(i+1)/n} + (i+1-n\cdot t)\cdot  W_{n,i/n}
\]
for $t\in [i/n,(i+1)/n]$
and $i\in\{0,\ldots,n-1\}$.
Then for all  $q\in [1,\infty]$ and all $r\in [1,\infty)$
there exists $c\in (0,\infty)$ such that, for all $n\in\N$,
\begin{equation}\label{two01}
	\bigl(\EE\bigl[\|W-\overline W_n\|_q^r\bigr]\bigr)^{1/r}\leq \begin{cases}
		c/\sqrt n, & \text{ if }q < \infty,\\ c\sqrt{\ln (n+1)}/\sqrt{n}, & \text{ if }q = \infty,
	\end{cases}
\end{equation}
see, e.g.~\cite{Speckman79} for the case $q\in[1, \infty)$ and~\cite{Faure92} for the case $q=\infty$.

Note that for all $n\in\N$ and all $t\in[0,1]$,
\begin{align*}
	|\eul_{n,t} - \overline X_{n,t}| & =
	\Bigl|\sum_{i=0}^{n-1}\sigma_n(\eul_{n,i/n})\cdot \ind_{[i/n,(i+1)/n]}(t)\cdot (W_t- \overline W_{n,t})\Bigr|\\
	& \le \sup_{s\in[0,1]} |\sigma_n(\eul_{n,s})|\cdot |W_t-\overline W_{n,t}|.
\end{align*}
Hence, by Lemma~\ref{tamedcoeff1}(i), ~\eqref{tmg2} and the estimate \eqref{tmg003a} in Lemma~\ref{eulprop},  for all $p\in [1,p_0/(\ell_\sigma+2))$ and all $q\in [1,\infty]$  there exist $c_1,c_2,c_3\in (0,\infty)$ such that, for all $n\in\N$,
\begin{equation}\label{eft7}
\begin{aligned}
	&\bigl(\EE\bigl[\|\eul_n-\overline X_n\|_q^p\bigr]\bigr)^{1/p}\\
	 & \qquad\le c_1\cdot \EE\bigl[\bigl(1+\sup_{s\in[0,1]}|\eul_{n,s}|^{p(\ell_\sigma +1)}\bigr)\cdot \|W-\overline W_n\|_q^{p}\bigr]^{1/{p}}\\
		&\qquad\le c_2 \cdot \bigl(1 + \EE\bigl[\sup_{s\in[0,1]}|\eul_{n,s}|^{p(\ell_\sigma +2)}\bigr]^{\frac{\ell_\sigma+1}{p(\ell_\sigma+2)}}\bigr)\cdot \EE\bigl[ \|W-\overline W_n\|_q^{p(\ell_\sigma+2)} \bigr]^{\frac{1}{p(\ell_\sigma+2)}}\\
	&\qquad\le c_3\cdot \EE\bigl[ \|W-\overline W_n\|_q^{p(\ell_\sigma+2)} \bigr]^{\frac{1}{p(\ell_\sigma+2)}}.
\end{aligned}
\end{equation}
Put $\kappa = \ell_\mu+\max(\ell_\mu,2\ell_\sigma + 2)+1$. Since $p_0/(\ell_\sigma+2) > p_0/\kappa$ we may  combine~\eqref{two0} to~\eqref{eft7} with Theorem~\ref{Thm1} to obtain Theorem~\ref{Thm2}.

\bibliographystyle{acm}
\bibliography{bibfile}

\end{document}